\documentclass[11pt,reqno]{amsart}

\usepackage[T1]{fontenc}
\usepackage{amsmath,amssymb,amsthm}
\usepackage{mathrsfs}
\usepackage{mathtools}
\usepackage[margin=1in]{geometry}
\usepackage{enumitem}
\usepackage{array}
\usepackage{longtable}
\usepackage{booktabs}
\usepackage[hidelinks]{hyperref}

\newtheoremstyle{rr}%
  {\medskipamount}{\medskipamount}%
  {\itshape}{}%
  {\bfseries}{.}{ }%
  {\thmname{#1}\thmnote{ #3}}
\newtheoremstyle{rrup}%
  {\medskipamount}{\medskipamount}%
  {\normalfont}{}%
  {\bfseries}{.}{ }%
  {\thmname{#1}\thmnote{ #3}}

\theoremstyle{rr}
\newtheorem*{rthm}{Theorem}
\newtheorem*{rlem}{Lemma}
\newtheorem*{rprop}{Proposition}
\newtheorem*{rcor}{Corollary}

\theoremstyle{rrup}
\newtheorem*{rdef}{Definition}
\newtheorem*{rrem}{Remark}

\newcommand{\Sin}[1]{\Sigma^{\mathrm{in}}_{#1}}
\newcommand{\Pin}[1]{\Pi^{\mathrm{in}}_{#1}}

\DeclareMathOperator{\cSR}{cSR}
\newcommand{\SRp}{\mathrm{SR}_{\mathrm{p}}}
\newcommand{\srS}{\mathrm{sr}_{\mathrm{Sacks}}}
\newcommand{\SRsym}{\mathrm{SR}^{\sim}}
\newcommand{\SRAK}{\mathrm{SR}^{\mathrm{AK}}}
\newcommand{\AKR}{\mathrm{R}}
\DeclareMathOperator{\SC}{SC}
\DeclareMathOperator{\tr}{tr}
\DeclareMathOperator{\qr}{qr}
\DeclareMathOperator{\rank}{rank}
\DeclareMathOperator{\Sp}{Sp}
\DeclareMathOperator{\Adm}{Adm}
\DeclareMathOperator{\Th}{Th}
\DeclareMathOperator{\tp}{tp}
\DeclareMathOperator{\Aut}{Aut}
\DeclareMathOperator{\Mod}{Mod}
\DeclareMathOperator{\trace}{trace}
\DeclareMathOperator{\Spec}{Spec}
\DeclareMathOperator{\Taut}{Taut}
\DeclareMathOperator{\Sent}{Sent}
\DeclareMathOperator{\tc}{tc}
\DeclareMathOperator{\ot}{ot}
\DeclareMathOperator{\Def}{Def}

\newcommand{\Ccl}{\mathcal{C}}
\newcommand{\Ffix}{\mathfrak{F}}
\newcommand{\Szero}{S^{\circ}}

\newcommand{\woneCK}{\omega_1^{\mathrm{CK}}}
\newcommand{\Lom}{L_{\omega_1\omega}}

\newcommand{\rhole}{\rho^{\leq}}
\newcommand{\rhosym}{\rho^{\sim}}

\newcommand{\iso}{\cong}
\newcommand{\restr}{\!\restriction\!}

\newcommand{\dq}{\textup{(RIG)}}           
\newcommand{\bff}{back-and-forth}

\newcommand{\AKbridge}{\textup{(AK-$\sim$-bridge)}}
\newcommand{\REALB}{\textup{(REAL-B)}}
\newcommand{\NSK}{\textup{(NS-K)}}
\newcommand{\NSKp}{\ensuremath{\textup{(NS-K)}^{+}}}
\newcommand{\BRctree}{\textup{(BR-c$^{\mathrm{tree}}$)}}
\newcommand{\BRcuns}{\textup{(BR-c$^{\mathrm{tree/uns}}$)}}

\allowdisplaybreaks
\numberwithin{equation}{section}

\begin{document}
\title[$\omega_1$-anchored labels in minimal counterexamples to Vaught's conjecture]{$\omega_1$-anchored labels in minimal counterexamples to Vaught's conjecture: a per-witness trichotomy and an unconditional stationary dichotomy}

\author{Mohammad Assem Mahmoud}
\address{University of Toronto Mississauga}
\email{mo.mahmoud@utoronto.ca}
\subjclass[2020]{Primary 03C75; Secondary 03C70, 03D60, 03E15}
\keywords{Vaught's conjecture, Scott rank, admissible ordinals, back-and-forth relations, weakly scattered theories, Harrison linear order}
\date{August 15, 2026}

\begin{abstract}
Let $\varphi$ be a minimal counterexample to Vaught's conjecture in the sense of
[Mon13, Def.~3.1]; such a $\varphi$ exists if Vaught's conjecture fails, by
Steel [Ste78] and Harnik--Makkai [HM77]. We combine two bodies of work on the
models of such a $\varphi$ that have not previously been brought into contact at
the level of statements: the analysis of Gonzalez--Rossegger--Turetsky [GRT],
who show that at every countable level $\beta$ exactly one $\equiv_\beta$-class
$\Ccl_\beta$ of models of $\varphi$ is uncountable and that at fixed points of an
associated function this class has a distinguished member of least Scott rank
(its \emph{label}); and the supply of models with prescribed $\omega_1^A$ coming
from higher recursion theory, namely Montalb\'an's Gandy-basis lemma [Mon13,
Lemma~3.4] and Sacks' $\Sigma_1$-hull club in \emph{Bounds on weak scattering}
[Sac07, Thm.~5.3]. All results are theorems of ZFC under the standing hypotheses
(H0)--(H3) of \S2.12. Our main results are:
\begin{enumerate}[label=\textup{(\roman*)},leftmargin=2.2em]
\item a \emph{per-witness trichotomy} (Thm.~5.1): for every limit $\lambda$ in the
fixed-point club $\Ffix$ above $\qr(\varphi)$, every model $A\models\varphi$ with
$\omega_1^A=\lambda$ and $\cSR(A)\ge\lambda$ either is the level-$\lambda$ label
$K_\lambda$ (which forces $\omega_1^{K_\lambda}=\lambda$), or lies outside
$\Ccl_{\lambda+1}$ (which forces the existence of two non-isomorphic models of
$\cSR=\lambda+1$), or is the level-$(\lambda+1)$ label $K_{\lambda+1}$ (which
forces $\omega_1^{K_{\lambda+1}}=\lambda$, so that $K_{\lambda+1}$ attains the
Nadel bound);
\item \emph{coordinate identities} (Thm.~5.2): the first and third branch
conditions are, level by level, equivalent to computations of $\omega_1$ of the
labels themselves ($\omega_1^{K_\lambda}=\lambda$, respectively
$\omega_1^{K_{\lambda+1}}=\lambda$);
\item a \emph{seeding theorem} (Thm.~5.3): on a club $\Szero$ of limit ordinals,
Sacks' construction supplies at every level a model of top rank with prescribed
$\omega_1$, and his atomic chain consists of the labels $K_\lambda$;
\item an unconditional \emph{stationary dichotomy} (Thm.~5.4(ii)): on $\Szero$,
either stationarily many successor levels $\lambda+1$ carry at least two
non-isomorphic models of Scott rank $\lambda+1$, or stationarily many labels
$K_{\lambda+1}$ attain the Nadel bound with $\omega_1^{K_{\lambda+1}}=\lambda$.
\end{enumerate}
On the set of models with $\omega_1=\lambda$ at a branch level (the \emph{fiber})
we then develop a further layer of structure: a uniqueness theorem for the
node-saturated model (Thm.~5.6), a complete isomorphism invariant --- the
\emph{trace} (Lemmas 5.7, 5.8) --- with a countable spectrum (\S5.11), the fact
that the fiber is a single $\equiv_\lambda$-class (Lemma~5.9), a localization
result for supported types (\S5.15), and two lower bounds on the types realized
by the labels (Lemmas 5.13, 5.14). We prove nothing bearing on Vaught's
conjecture itself; \S1.3 records explicitly that neither limb of the route to
Vaught's conjecture described at [GRT, p.~3] is approached.
\end{abstract}

\maketitle

\section*{Conventions and terminology}

\noindent\textbf{Scott ranks.} Several notions of Scott rank occur below and are
kept rigorously distinct (\S\S2.2--2.5):
\begin{itemize}[leftmargin=1.6em,itemsep=1pt]
\item the \emph{categoricity rank} $\cSR$ of [Mon15] (\S2.2). This is the rank
written $\mathrm{SR}$ in [GRT]: $\text{GRT-}\mathrm{SR}=\cSR$ with no discrepancy.
Unless the contrary is said, every rank statement below is in $\cSR$;
\item the \emph{parameterized rank} $\SRp$ (\S2.2), defined by
$\Sin{\alpha+2}$ Scott sentences. Ranks are unparameterized unless $\SRp$ is
written;
\item the \emph{Sacks fragment rank} $\srS$ (\S2.4), from [Sac07, \S2, (2.5)];
\item Montalb\'an's \emph{symmetric rank} $\SRsym=\sup\{\rhosym(\bar a)+1\}$
([Mon15, p.~5432]) (\S2.4);
\item the \emph{Ash--Knight rank} $\AKR$ of [AK00, \S6.7], which is the rank
written $\mathrm{SR}$ in [Mon13] (\S2.3).
\end{itemize}
The lowercase rank $\mathrm{sr}(A)=\sup\rhosym(\bar a)$ of [Mon15, p.~5432] is yet
another rank and is never used here; we mention it only because the notation is
easily confused with $\srS$ (see \S2.4).

\smallskip
\noindent\textbf{Rank transport.} A step in a proof that passes between the
Sacks fragment rank $\srS$ and the categoricity rank $\cSR$ will be called a
\emph{rank transport}. Every rank transport in this paper is routed through
Lemma~TV of \S2.5, which also records the complete list of such steps:
step~(3) in the proof of Lemma~5.9; Corollary~5.10 and Lemma~AL(vii-2)
(lower-bound form only); and Lemma~S(iv)--(v), hence Theorem~5.3(i)--(ii)
(Theorem~(TV-b) at the two values). This is worth isolating because the
comparison of $\srS$ with $\cSR$ is not available from the literature in the
form needed here; see \S2.5.

\smallskip
\noindent\textbf{Terms introduced in this paper.} The following are not standard
terminology and are defined where indicated: \emph{label} (\S3.2, following
[GRT, Def.~3.1, p.~13]); \emph{crowded class} (\S3.1); \emph{high witness} and
the sets $W_\lambda$, $W^{\mathrm{high}}_\lambda$, $\mathrm{HIGH}(\lambda)$
(\S3.4); \emph{glue} and \emph{split} (\S3.6); \emph{fiber}, \emph{trace}, and
the \emph{trace spectrum} $\Spec_\lambda$ (Definitions 5.5 and 5.11); \emph{node} and
\emph{node theory} (\S3.7, following [Sac07, \S4]); \emph{node-saturated}
(Definition~5.5); \emph{supported} type (\S5.15, following [Mon15, Def.~3.1]); and
\emph{rank transport} (above). Where we say that a structure \emph{attains the
Nadel bound} (equivalently, is \emph{Nadel-maximal}) we mean that
$\cSR(A)=\omega_1^A+1$, the largest value permitted by \S2.7.

\smallskip
\noindent\textbf{Citations and standing assumptions.} All statements about
$\varphi$ are conditional on the failure of Vaught's conjecture. The base theory
is ZFC throughout; no determinacy hypothesis and no large cardinal is used in
any proof below (see Appendix~A). Citations to Sacks, \emph{Bounds on weak
scattering} [Sac07], follow the numbering of the December~2004 preprint; a
concordance with the published edition is given in Appendix~A.7. The numbering
of [GRT] collides at $3.1$ (Proposition~3.1 is on p.~12, Definition~3.1 on
p.~13), so every citation to a ``3.1'' of [GRT] carries its page.

\section{Introduction}

\subsection*{1.1. Setting and what is proved}

A counterexample to Vaught's conjecture [Vau61], if one exists, admits a minimal refinement $\varphi$: an $\Lom$ sentence with exactly $\aleph_1$ countable models, scattered in the model-level sense that for each $\alpha<\omega_1$ there are at most countably many $\equiv_\alpha$-classes among its models ([Mon13, Def.~1.4]), and such that every $\Lom$ sentence decides $\varphi$ up to countably many models ([Mon13, Def.~3.1, p.~6]; existence from [Ste78, Thm.~1.5.11] via [Mon13, p.~6], and [GRT, Def.~3.3, p.~13] citing [HM77]---both external, by reference, attribution only). Two structure theories for the models of such a $\varphi$ exist in the literature, in disjoint vocabularies.

On the \bff\ side, Gonzalez--Rossegger--Turetsky prove ([GRT, Thm.~3.6, p.~14], and the first paragraph of its proof) that at every countable level $\beta$ exactly one $\equiv_\beta$-class $\Ccl_\beta$ of models of $\varphi$ is uncountable, that these classes are nested and limit-coherent, and that at fixed points of the induced function $f$ the class $\Ccl_\alpha$ has a unique member $K_\alpha$ of minimal Scott rank---the \emph{label}. On the recursion-theoretic side, Montalb\'an ([Mon13, Lemma~3.4, p.~7], Gandy basis) supplies, at every admissible $\lambda$ (relativized: $\lambda\in\Adm(t_\varphi)$, the countable ordinals admissible relative to a real code $t_\varphi$ of $\varphi$; see \S2.9), a model $A\models\varphi$ with $\omega_1^A=\lambda$ and Scott rank in $\{\lambda,\lambda+1\}$; and Sacks ([Sac07, Prop.~5.2 and Thm.~5.3, p.~14]; numbering per the December 2004 preprint) supplies, on a club $C_{5.3}$ of $\Sigma_1$-hull ordinals, both an exact-rank atomic chain $A_\lambda$ and a $\lambda$-saturated top-rank witness $B_\lambda$ with $\omega_1^{B_\lambda}=\lambda$. Neither side mentions the other's objects: GRT contains no $\omega_1^A$ apparatus, and [Sac07] contains no $\equiv$-class, label, uniqueness, or trichotomy content.

This paper's contribution is the interaction. The trichotomy (Thm.~5.1) classifies each pinned witness against the labels; the coordinate identities (Thm.~5.2) convert the first and third branch conditions into $\omega_1$-computations of the labels themselves; the seeding theorem (Thm.~5.3) shows on a club that $W_\lambda^{\mathrm{high}}$ is non-empty, and identifies the atomic chain of [Sac07, Thm.~5.3] with the labels there; and the stationary dichotomy (Thm.~5.4(ii)) extracts an unconditional either/or with genuinely different model-theoretic content on the two sides. (``Unconditional'' here and in the title means: conditional on nothing beyond
the standing hypotheses (H0)--(H3) of \S2.12 --- in particular not on any of
the open questions of \S6, such as $\dq$ --- \emph{together with} the cited
literature as itemized in A.1 and A.5, three statements of which are consumed
at statement level without a printed proof in their sources: the two [Mon15,
\S3.1] coincidence clauses behind TV-a(i)/TV-c(ii), and the [Sac07, p.~14]
pin behind Lemma~S; each is flagged at its point of use.) A second layer of structure then develops on the $\omega_1=\lambda$ fiber of the branch node: the trace invariant classifies the fiber completely (Lemmas 5.7 and 5.8) with countable spectrum (\S5.11, (C1)); the fiber is a single $\equiv_\lambda$-class (Lemma~5.9; one of the rank transports listed in A.5); the fragment-to-$\Pin{\lambda}$ correspondence localizes at supported cells (Lemma~5.15); and the labels acquire realization floors under the glue coordinate (Lemmas 5.13, 5.14). The remaining open questions are stated precisely (\S6): the questions there fall into two groups, one about which types the labels realize ($\NSKp$, Question~2 of \S6) and one about the relation between fragment types and $\Pin{\lambda}$-types ($\BRcuns$, Question~3 of \S6), and both turn on the same obstruction.

Where a proof passes between Sacks' fragment rank and the categoricity rank,
it does so through the single transport result of \S2.5. The middle link of that
transport is, as far as we have been able to determine, not stated anywhere in
the literature; we prove it here as Theorem~(TV-b). The complete list of
transport steps is recorded at Lemma~TV in \S2.5.

\subsection*{1.2. Attribution}

Almost all of the apparatus this paper stands on is due to others, and we
record the attributions precisely. The minimality dichotomy, the uncountable
$\equiv_\alpha$-class at every level together with its coherence at limits, the
function $f$, and the uniqueness of the model of least rank at fixed points of
$f$ are due to Gonzalez--Rossegger--Turetsky ([GRT, Thm.~3.6, p.~14, proof
paragraphs~1--2]); the club/$\Sin{<\alpha}$ form of the same phenomenon is in
Montalb\'an ([Mon13, Lemma~3.3, pp.~6--7]). Existence, uniqueness and
universality of labels are [GRT, Lemma~2.3, p.~7], [GRT, Def.~3.1, p.~13], [GRT,
Prop.~3.1 and Cor.~3.2, p.~12], and [GRT, Cor.~3.4/3.5, p.~13]; the
collapse mechanism used in branch~(III) is [GRT, Prop.~3.1 and Cor.~3.2, p.~12] (compare also [GRT, Prop.~3.13, p.~17]). The supply
of models with $\omega_1^A$ equal to a given admissible ordinal, with rank in
$\{\alpha,\alpha+1\}$, is [Mon13, Lemma~3.4, p.~7]. The club supply of top-rank
models with prescribed $\omega_1$ and the exact-rank atomic chain are [Sac07,
Prop.~5.2 and Thm.~5.3, p.~14]; the necessity direction is [Sac07, Cor.~6.2 and Cor.~6.4],
quoted at [Mon13, p.~4]. Counting results by rank are in [Mon13, p.~4] and
[GRT, Thm.~2.1 and Cor.~2.11, pp.~7, 11--12], and, in the analytic setting, in
Larson [Lar] and Larson--Shelah [LS]. Our conventions on ranks follow [Mon15,
pp.~5432--5433] together with [Sac07, \S2] for $\srS$.

At the level of technique, the implication ``same realized types
$\Rightarrow$ isomorphic'' in Lemma~BF has a precedent in Sacks' proof of
[Sac07, Thm.~8.1] (pp.~23--24, steps (8.20)--(8.22)), where equality of realized
type sets is established between a structure and a companion built from it, and
homogeneity then yields an isomorphism. That theorem carries no saturation
hypothesis, asserts no uniqueness, and compares a structure only with a
companion drawn from its own canonical tower; what we take from it is the
mechanism, not the statement (see the remark following Lemma~BF).

What we claim as new is the combination of the supply of models with prescribed
$\omega_1^A$ from [Mon13] with the analysis of labels in [GRT]: the identity
$\omega_1^{K_\lambda}=\lambda$ in branch~(I); the $\equiv_{\lambda+1}$-trichotomy
against $K_{\lambda+1}$; branch~(III), in which the label attains the Nadel
bound; the stationary pigeonhole of \S5.4; and the uniqueness statement of
Theorem~5.6. Neither limb of the route to Vaught's conjecture recorded at
[GRT, p.~3] --- (a) at least two models of every unparameterized rank, or (b)
exactly one model of each parameterized rank --- is implied by any of this; see
\S1.3.

\subsection*{1.2b. What is new}

We have looked for each of the following in the literature cited above and have
not found it; we state them as claims of novelty in that limited sense.
\begin{enumerate}[label=\textup{(\arabic*)},leftmargin=2.4em]
\item \emph{Branch~(I): a label with $\omega_1^{K_\lambda}=\lambda$.} That
$K_\lambda$ exists and has rank exactly $\lambda$ is [GRT, Thm.~3.6]; the
computation of $\omega_1^{K_\lambda}$ is new, and [GRT] contains no
$\omega_1^A$ apparatus at all.
\item \emph{Branch~(II): $(\lambda+1)$-splitting implies $\lambda+1\notin\Ffix$.}
We have found no statement of failure of the fixed-point property, no model
with prescribed $\omega_1$, and no comparable statement in the literature (the
implication is recorded at Theorem~5.1(II)).
\item \emph{Branch~(III): a label attaining the Nadel bound.} The collapse
mechanism is known (the pattern of [GRT, Prop.~3.13] and [GRT, Cor.~3.2]); a
label whose $\omega_1$ is computed and which attains the Nadel bound does not
appear, and the model $B_\alpha$ of [Sac07, Thm.~5.3] is nowhere identified with
a label, nor is any uniqueness asserted for it.
\item \emph{The stationary dichotomy of Theorem~5.4.} We have found nothing of
this shape. The nearest statement in the literature is Larson--Shelah [LS], in a
different setting and with different content. Becker [Bec94, \S\S2, 4] proves
club and stationary per-level orbit-uniqueness dichotomies for minimal
counterexamples to the topological Vaught conjecture, expressed in terms of
Borel rank; there is no overlap with the statements below.
\item \emph{Theorem~5.6.} This strengthens ``an $\alpha$-saturated model'' in
[Sac07, Thm.~5.3] to a uniqueness statement about models of the node theory.
Contrast the precedent in [Sac07, Thm.~8.1] discussed above: no saturation
hypothesis, no uniqueness, and a comparison confined to a single tower.
\item \emph{The question $\BRcuns$ of \S6, isolated as a definite gap.}
[Sac07, \S2] states no relation, in either direction, between equality of
tower types and $\equiv_\delta$- or $\Pin{\delta}$-types; there is no
back-and-forth or Karp-style material in that paper at all.
\item \emph{Proposition (AK-$\sim$-bridge) of \S2.5.} [AK00, \S6.7] states,
without proof, a value identity of the same shape for its own pair
$(\SRAK,\AKR)$; we have found no comparison of $\SRAK$ with $\SRsym$
anywhere. The proposition is consumed in no proof below.
\end{enumerate}

\subsection*{1.3. Relation to the two-limb route of \textup{[GRT, p.~3]}}

Gonzalez--Rossegger--Turetsky ([GRT, p.~3]) record a route to Vaught's
conjecture by contradiction, through either of two strengthenings of their own
results: limb~(a), that counterexamples have at least two models of every
unparameterized Scott rank; limb~(b), that some counterexample has exactly one
model of each parameterized Scott rank. Either limb would prove Vaught's
conjecture outright, against their Cor.~2.11 and Thm.~3.6 respectively.

The same page, and again p.~12, describes the earlier result of Sacks [Sac83] as
giving multiple models of every ``$\Sigma^1_2$-admissible Scott rank''
(superscript~$1$, subscript~$2$). This appears to combine two distinct but
compatible facts, both of which are in [Sac07]. The \emph{bound} is one of
$\Sigma_2$-admissibility: [Sac07, \S1] restates the result of [Sac83] as saying
that every countable model of $T$ has a countable copy in $L(\beta,T)$ for some
$\beta<\sigma^T_2$, where $\sigma^T_2$ is the least $\alpha$ with $L(\alpha,T)$
$\Sigma_2$-admissible (a subscript, at the level of KP; $\sigma^T_2$ recurs at
[Sac07, (6.1)]). Separately, $\Sigma^1_2$ is the \emph{complexity of the
predicate} ``Vaught's conjecture holds for $T$'': [Sac07, \S5, Prop.~5.1] shows
it to be $\Sigma_1$ over $L(\omega_1^{L(T)},T)$, hence $\Sigma^1_2$ (a
superscript, in the analytical hierarchy). The two facts do not conflict, and
neither is used in any proof below; we record the point only so that the reader
is not misled by the phrase when consulting [GRT].

\emph{Neither limb is approached below.} The splitting side of Theorem~5.4(ii)
produces at least two models of rank $\lambda+1$ at stationarily many successor
levels only --- not at every rank --- so limb~(a) is untouched. Nothing below
constrains the counting of models by parameterized rank, so limb~(b) is
untouched; and [GRT, Cor.~2.11] already forbids (b) for $\Pin{2}$ theories.
Every statement below about the comparison of rank notions is a statement about
a single structure, and none of them carries an implication toward Vaught's
conjecture.

\subsection*{1.4. Related work}

None of the following is used in any proof below.
\begin{itemize}[leftmargin=1.4em]
\item \textbf{Harrison-Trainor}, \emph{Scott ranks of models of a theory}
[Har18]: a classification of Scott spectra, proved in ZFC together with
projective determinacy (so noted in that paper); the examples produced there
have continuum many models, as [GRT, p.~7] notes, and there is no counting of
models of a counterexample.
\item \textbf{Larson}, \emph{Scott processes} [Lar]: an approach through
$\Pi_1$-over-$\mathrm{HC}$ statements and the L\'evy collapse, giving models of
every limit rank in $(\qr(\varphi),\omega_2)$, and a production of minimal
counterexamples (Remark~10.9 there).
\item \textbf{Larson--Shelah} [LS], in the setting of the analytic Vaught
conjecture: a transfer of an exact count to club-many ranks (Thm.~0.2 there).
This is the closest statement in the literature to the branch analysis below;
it involves no anchoring at admissible ordinals, no $\equiv_{\lambda+1}$
trichotomy, and no labels.
\item \textbf{Becker} [Bec94]: necessary and sufficient conditions for
minimality in the setting of Polish group actions, some of them proved under
determinacy hypotheses (so noted in that paper).
\item \textbf{Sacks} [Sac83]: the precursor discussed in \S1.3.
\end{itemize}

\section{Preliminaries}

\subsection*{2.1. Back-and-forth and truth flow}
Formulas are those of $\Lom$; $\Pin{\alpha}$ and $\Sin{\alpha}$ denote the standard infinitary hierarchy over which the ranks below are defined (see [MonP2, Ch.~II] or [GRT, \S1]). The relations $\le_\alpha,\equiv_\alpha$ are those of [GRT, Def.~1.2, p.~6]. Karp orientation [GRT, Thm.~1.1, p.~6]: $(M,\bar a)\le_\alpha (N,\bar b)$ iff every $\Pin{\alpha}$ formula true of $\bar a$ in $M$ is true of $\bar b$ in $N$, iff every $\Sin{\alpha}$ formula true of $\bar b$ in $N$ is true of $\bar a$ in $M$ (clauses (2),(3); $\alpha\ge1$). Thus \emph{$X\le_\alpha Y$ moves $\Pi_\alpha$-truths of $X$ into $Y$}. Monotonicity: $\gamma\le\gamma'\Rightarrow\equiv_{\gamma'}\subseteq\equiv_\gamma$. At a limit $\lambda$: $\equiv_\lambda=\bigcap_{\gamma<\lambda}\equiv_\gamma$ ([GM23, p.~5], citing [MonP2, Def.~II.3.2]). Sentence level: $M\equiv_\alpha N$ iff $M,N$ satisfy the same $\Pin{\alpha}$ (equivalently $\Sin{\alpha}$) sentences.

\subsection*{2.2. The categoricity rank $\cSR$ (primary; ``$\mathrm{SR}$'' in GRT)}
$\cSR(M):=$ the least $\alpha$ such that $M$ has a $\Pin{\alpha+1}$ Scott sentence (Scott sentences in the sense going back to [Sco65]), as at [GRT, p.~5], following [Mon15]. Thus $\text{GRT-}\mathrm{SR}=\cSR$ exactly. $\SRp(M):=$ the least $\alpha$ such that $M$ has a $\Sin{\alpha+2}$ Scott sentence [GRT, p.~5]. Unless the contrary is said, every rank statement below is in $\cSR$, and ranks are unparameterized unless $\SRp$ is written. From [Mon15, Thm.~1.1] we use three of the equivalent conditions listed there, in the numbering of that paper, each read at a level $\alpha$ for a countable structure $M$: \textup{(U1)} every automorphism orbit of a tuple of $M$ is definable by a $\Sin{\alpha}$ formula without parameters; \textup{(U2)} $M$ has a $\Pin{\alpha+1}$ Scott sentence; \textup{(U5)} every $\Pin{\alpha}$-type realized in $M$ is $\Sin{\alpha}$-supported within $M$ in the sense of [Mon15, Def.~3.1] (recalled in \S5.15). These labels are used in Lemmas 5.13 and 5.14 and in \S7.H.

\subsection*{2.3. The Ash--Knight rank $\AKR$}
[Mon13]'s $\mathrm{SR}(A)=\sup\{\rhole(\bar a)+1\}$ equals the rank $\AKR$ of [AK00, \S6.7]; the identification is stated at [Mon13, p.~3]. We quote the defining clause of $\rhole$: $\rhole(\bar a)$ is the least ordinal $\alpha$ such that if all $\Pin{\alpha}$ formulas true of $\bar a$ are true of another tuple $\bar b$, then all $\Lom$ formulas true of $\bar a$ are true of $\bar b$. The comparison of $\AKR$ with $\cSR$ is [Mon15, p.~5433]: they are equal or differ by $1$; they coincide at limit values; and on computable structures they coincide at $\woneCK$ and $\woneCK+1$.

\subsection*{2.4. The Sacks fragment rank $\srS$}
The \emph{canonical tower} $T^A_\delta$, $L^A_\delta$ of a countable structure $A$ is that of [Sac07, \S2, clauses (1)--(4)]: $L^A_0$ is the finitary language of $A$'s signature (clause~(1)); $T^A_\delta:=\Th_{L^A_\delta}(A)$, so each level is a truth theory of $A$ (clause~(3)); $L^A_{\delta+1}$ is the least fragment containing $L^A_\delta$ together with $\bigwedge p$ for every non-principal complete $L^A_\delta$-type $p$ realized in $A$ (clause~(4)); at limits one takes unions (clause~(2)).
Following [Sac07, \S2, (2.5)]: $\srS(A):=$ the least $\delta$ such that $A$ is
the atomic model of $T^A_\delta$. (The bound variable in (2.5) is stated as
$\alpha$ where $\delta$ is meant; this is a typographical slip with no effect on
the definition.) The comparison of $\srS$ with another rank is given at
[Mon15, p.~5432]: ``This rank and the previous Scott rank, $\mathrm{SR}$, coincide
at the multiples of $\omega^2$. On computable structures, they also agree at
$\woneCK$ and $\woneCK+1$.'' The rank called $\mathrm{SR}$ there is the
symmetric one, $\SRsym=\sup\{\rhosym(\bar a)+1\}$, built from the symmetric
back-and-forth relations $\sim_\alpha$ (equality of atomic type at $0$;
symmetric one-element extensions at successors; the intersection of the earlier
relations at limits). It is \emph{not} the Ash--Knight rank $\AKR$; this
distinction is the reason for \S2.5.

\begin{rrem}[A misprint at {[Mon15, p.~5432]}]
The limit clause of the definition of $\sim_\alpha$ is printed in the
published edition as ``for \emph{some} $\beta<\alpha$''
[Mon15, \S3.1, clause~(3), p.~5432]. This is a
misprint: the intersection reading (``for all $\beta<\alpha$'') is the one
intended, and it is the one used throughout this paper. Two independent grounds
support the reading. First, clause~(2) makes $\sim_\alpha$ decreasing in
$\alpha$, so at a limit $\lambda$ the union reading would collapse $\sim_\lambda$
to $\sim_0$ and trivialize both $\mathrm{sr}$ and $\SRsym$. Second, the limit
clause is stated in ``for all'' form in the neighbouring literature: at
[AK00, \S6.7, p.~98] and at [Alv21, p.~1712], among others. In both of those
sources the relations are cumulative in the level and extend by tuples, so they
are not the relations of [Mon15]; see the qualification in \S2.5. What the two
sources establish is that the limit clause of a decreasing symmetric family is
an intersection wherever it is written out. No statement below depends on the misprint,
and every citation to [Mon15] in this paper is to the published edition.
\end{rrem}

Two notational cautions. First, the lowercase rank $\mathrm{sr}(A)=
\sup\rhosym(\bar a)$ of [Mon15, p.~5432] is a different rank from both $\srS$ and
$\SRsym$, and is never used here. Second, the presentation of the Sacks tower at
[Mon15, p.~5432] closes each successor stage under $\vee,\wedge,\neg,\exists,\forall$
and adds $\bigwedge p$ for each realized $L_\alpha(A)$-type, whereas clause~(4)
of [Sac07, p.~5] adds $\bigwedge p$ only for realized non-principal $n$-types;
the two generate the same rank. We emphasize that the coincidence quoted above
is at the level of ranks only: no correspondence between the two hierarchies
level by level is asserted anywhere in the literature we cite, and this is
exactly what is missing at the open question $\BRcuns$ (Question~3 of \S6).

\subsection*{2.5. Comparing $\srS$ with $\cSR$: Theorem \textup{(TV-b)}}

It is tempting to record, as a single convention, that ``at the values
$\omega_1^A$ and $\omega_1^A+1$ the ranks $\srS$, $\AKR$ and $\cSR$ agree''. That
is not available from the literature. The rank with which [Mon15, p.~5432] compares
$\srS$ is its own symmetric rank $\SRsym$, not the Ash--Knight rank $\AKR$: the
rank $\AKR=\sup\{\rhole(\bar a)+1\}$ of [Mon13, p.~3] is built from the one-sided
relations, [Mon15, p.~5433] compares $\AKR$ with two \emph{other} ranks and offers no
comparison between $\SRsym$ and $\AKR$, and the two suprema are taken over
different families of tuple relations. We have not found the identification of
$\SRsym$ with $\AKR$ stated anywhere. The comparison therefore has to be
decomposed into three steps, of which the first and third are in the literature
and the second is proved here.

\begin{rlem}[TV-a ($\srS\leftrightarrow\SRsym$)]
For countable $A$: \textup{(i)} for every value $v$ that is a multiple of $\omega^2$: $\srS(A)=v \iff \SRsym(A)=v$ \textup{[unrelativized]}; \textup{(ii)} for $v\in\{\omega_1^A,\omega_1^A+1\}$: $\srS(A)=v \iff \SRsym(A)=v$.
\end{rlem}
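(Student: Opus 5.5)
The plan is to obtain both clauses from the comparison stated at [Mon15, p.~5432], which is quoted in \S2.4. Clause~(ii) needs a relativization of that comparison from $\woneCK$ to $\omega_1^A$. As far as possible I would reduce it to clause~(i) together with a Nadel-type bound, so that little rests on the relativization.

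\emph{Clause (i).} This is the first sentence of the quotation in \S2.4: $\srS$ and $\SRsym$ coincide at the multiples of $\omega^2$. The comparison there is between the tower of [Mon15] and $\SRsym$. Our $\srS$ is defined via clause~(4) of [Sac07], and by the second notational caution of \S2.4 the two towers generate the same rank. So (i) holds for every countable $A$ with no relativization, which is why it is marked ``unrelativized''. This is one of the two [Mon15, \S3.1] coincidence clauses that, by the abstract and \S1.1, are consumed at statement level; the proof records that (i) rests on this unprinted source and flags it there.

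\emph{Clause (ii), value $\omega_1^A$.} The ordinal $\omega_1^A$ is admissible and $>\omega$, hence a limit of limits. Every countable admissible ordinal $>\omega$ is closed under ordinal exponentiation, so it is of the form $\omega^\omega\cdot\xi$ and in particular a multiple of $\omega^2$. Therefore the case $v=\omega_1^A$ of (ii) is a special case of (i), and needs neither relativization nor the computability hypothesis.

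\emph{Clause (ii), value $\omega_1^A+1$.} This is the genuine step and the main obstacle. I would first establish, for both ranks, the Nadel-type bound $\le\omega_1^A+1$. For $\srS$ this comes from [Sac07]'s analysis of the tower inside $L_{\omega_1^A}[A]$. For $\SRsym$, the relations $\sim_\beta$ for $\beta<\omega_1^A$ are $\Delta^1_1(A)$ uniformly in notations, so the standard Nadel argument applies. Given the bound, $\srS(A)=\omega_1^A+1$ iff $\srS(A)\ge\omega_1^A$ and $\srS(A)\ne\omega_1^A$, and likewise for $\SRsym$. The second conjunct transfers by the previous paragraph. So it remains to show
\[
\srS(A)<\omega_1^A \iff \SRsym(A)<\omega_1^A .
\]
I would first try to get this by relativizing the computable-structure clause of [Mon15, p.~5432] to the atomic diagram of $A$. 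That argument only uses that the tower levels and the $\sim_\beta$ for $\beta<\woneCK$ are hyperarithmetic, together with Barwise--Kreisel compactness (or $\Sigma^1_1$-bounding) to show that each rank stabilizes below $\woneCK$ iff the other does. Every ingredient relativizes verbatim to $A$, with $\woneCK$ replaced by $\omega_1^A$.

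As a fallback that avoids the unprinted proof, I would argue directly. If $\SRsym(A)=\beta<\omega_1^A$, take the multiple of $\omega^2$ just above $\beta$; it is still below $\omega_1^A$. At that level the tower has already made $A$ atomic, because each $\sim_\beta$-class is defined by a tower formula of comparable level. The converse uses the same bookkeeping in reverse. Should this bookkeeping fail to close, the proof will instead cite the relativized coincidence clause explicitly as the second [Mon15, \S3.1] statement consumed at statement level (TV-a(ii)), as the paper's standing flags anticipate.
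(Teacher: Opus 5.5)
Your treatment of clause~(i), and of the value $\omega_1^A$ in clause~(ii) as an instance of~(i), matches the paper's. At $\omega_1^A+1$ you take a different route. The paper relativizes the computable-structures sentence of the same [Mon15, p.~5432] quotation wholesale. You instead reduce to two Nadel caps plus the floor equivalence $\srS(A)<\omega_1^A\iff\SRsym(A)<\omega_1^A$. The reduction is valid, but your main route to the floor step has a concrete gap.

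Relativizing ``to the atomic diagram of $A$'' replaces $\woneCK$ by $\omega_1^D$, where $D$ is the diagram of whichever copy you hold. Since $\omega_1^A$ is a \emph{minimum} over $\Sp(A)$, $\omega_1^D$ may be strictly larger, and then $\omega_1^D>\omega_1^A+1$ because both are admissible. In that case the relativized clause is vacuous: by the caps, neither rank reaches $\omega_1^D$. Your Nadel sketches (``$\Delta^1_1(A)$'', ``inside $L_{\omega_1^A}[A]$'') have the same defect. The missing step is exactly the one the paper writes out: fix $X\in\Sp(A)$ with $\omega_1^X=\omega_1^A$ (the minimum is attained, [Mon13, p.~4]), pass to an $X$-computable copy, and use isomorphism-invariance of both ranks.

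Note also that [Mon15] prints this clause without proof. So there is no argument whose ingredients you can inspect, only the general principle that results about computable structures relativize. Nor is any relativized form printed that you could cite as a last resort. (The second [Mon15, \S3.1] statement the paper consumes is the TV-c(ii) sentence, not TV-a(ii).)

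Your fallback does close, once the key lemma is stated in the right direction. It is tower types that refine $\sim$-classes, not $\sim$-classes that are tower-definable.
\begin{itemize}
\item By induction on $\delta$, equality of $L^A_\delta$-types implies $\sim_\delta$. In the forth step, put $p:=\tp_{L^A_\delta}(\bar a c)$. If $p$ is non-principal, use $\exists y\bigwedge p\in L^A_{\delta+1}$ (clause~(4) of [Sac07, \S2]). If $p$ is principal, use $\exists y\,g$ for a generator $g\in L^A_\delta$. Limits are intersections.
\item Hence if $\sim_\beta$-equivalent tuples are automorphic, every realized $L^A_\beta$-type is a single orbit, defined by $\bigwedge p$ or by its generator. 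That formula generates the $L^A_{\beta+1}$-type, so $\srS(A)\le\SRsym(A)+1$. No detour through the next multiple of $\omega^2$ is needed.
\item Conversely, a countable atomic model of $T^A_\delta$ is $L^A_\delta$-homogeneous. Also, $L^A_\delta$-formulas have quantifier rank $<\omega\cdot(1+\delta)$ (the computation in (H-growth) of \S7.H). So $\SRsym(A)\le\omega\cdot(1+\srS(A))+1$.
\end{itemize}
Multiplicative closure of $\omega_1^A$ then gives the floor equivalence. The caps are [Sac07, p.~5] for $\srS$, and Corollary~(SR$^\sim$-Nadel) for $\SRsym$, whose proof does not use TV-a.

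Run tuple by tuple, the same two facts also give the limit value, since each generator and each $\rhosym(\bar a)+1$ lies below $\omega_1^A$. Completed this way, your route proves clause~(ii) without consuming the unproved computable-structures sentence of [Mon15] at all. The paper's one-line relativization is shorter, but it rests entirely on that unprinted sentence.
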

\noindent Clause~(i) is the statement quoted from [Mon15, p.~5432]. Two
provenance facts about that quotation, for parity with the disclosure made for
[AK00, \S6.7] below: it occurs in [Mon15]'s historical review (\S3.1 there,
marked as not needed for the rest of that paper), and it is printed without
proof; the same holds of the TV-c(ii) sentence at [Mon15, p.~5433]. Both are
consumed at statement level. Clause~(ii) is obtained from clause~(i) by a
routine relativization, which we write out once here: fix $X\in\Sp(A)$ with $\omega_1^X=\omega_1^A$ (the min is attained, [Mon13, p.~4]); $A$ has an $X$-computable copy; both ranks are isomorphism-invariant ($\srS$: [Sac07, p.~5]; $\SRsym$: the $\sim$-clauses are isomorphism-invariant); ``computable'' enters the clause of [Mon15] only through the anchor $\woneCK=\omega_1^{\emptyset}$, which relativizes to $\omega_1^X$ uniformly. Note that at $v=\omega_1^A$ the two clauses overlap: $\omega_1^A$ is admissible, hence closed under primitive recursive ordinal functions, hence a multiple of $\omega^2$. So at the limit value only clause~(i) is needed, and the relativization is required only at the successor value.

\begin{rlem}[TV-c ($\AKR\leftrightarrow\cSR$)]
\textup{(i)} $\AKR$ is the rank written $\mathrm{SR}$ in [Mon13] \textup{([Mon13, p.~3])}. \textup{(ii)} For every countable $A$: $\AKR(A)$ and $\cSR(A)$ are equal or differ by $1$; they are equal at limit values; and for $v\in\{\omega_1^A,\omega_1^A+1\}$: $\AKR(A)=v \iff \cSR(A)=v$ \textup{([Mon15, p.~5433]; the clause about computable structures is relativized as in TV-a(ii), and the clause about limit values needs no relativization)}. \textup{(iii)} \emph{Lower-bound corollary:} at a limit $\lambda$, $\cSR(A)<\lambda \Rightarrow \AKR(A)\le\cSR(A)+1<\lambda$; contrapositively $\AKR(A)\ge\lambda \Rightarrow \cSR(A)\ge\lambda$.
\end{rlem}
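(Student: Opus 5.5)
The plan is to treat TV-c clause by clause, observing that (i) and most of (ii) are citations, so that the only genuine argument is the relativization in (ii) and the short derivation in (iii).

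For (i), we quote [Mon13, p.~3], where $\mathrm{SR}(A)=\sup\{\rhole(\bar a)+1\}$ is defined and identified with the rank of [AK00, \S6.7]. Nothing further is needed, except to check that the defining clause of $\rhole$ recalled in \S2.3 matches the one in [AK00].

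For (ii), the first two assertions ($|\AKR(A)-\cSR(A)|\le1$, with equality at limit values) are read off [Mon15, p.~5433]. They are statements about an arbitrary countable structure and involve no oracle, so no relativization is required. The remaining assertion, at $v\in\{\omega_1^A,\omega_1^A+1\}$, is printed there for computable structures with anchor $\woneCK$. We transfer it exactly as in TV-a(ii):
\begin{itemize}[leftmargin=1.6em,itemsep=1pt]
\item fix $X\in\Sp(A)$ with $\omega_1^X=\omega_1^A$, which is possible by [Mon13, p.~4];
\item replace $A$ by an $X$-computable copy, using that both $\AKR$ (via $\rhole$, which is defined from $\Pin{\alpha}$-truth) and $\cSR$ (via Scott sentences) are isomorphism-invariant;
\item note that the cited clause uses computability only through $\omega_1^{\emptyset}$, and that the underlying argument (the Barwise--Kreisel/$\Sigma_1$-bounding analysis at $\woneCK$) relativizes uniformly to $X$.
\end{itemize}
At $v=\omega_1^A$ this step is in fact redundant, since $\omega_1^A$ is a limit and the limit-value clause already gives $\AKR(A)=\omega_1^A\iff\cSR(A)=\omega_1^A$. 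The relativization is therefore needed only at $v=\omega_1^A+1$.

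The main obstacle is exactly that successor case. The source states it without proof, so the plan consumes it at statement level and flags it as one of the three unproved literature inputs mentioned in \S1.1. If a direct argument were wanted, I would try this: an $X$-computable $A$ with $\cSR(A)=\omega_1^X+1$ has, by Nadel, all $\rhole$-values at most $\omega_1^X$. Suprema of $\rhole(\bar a)+1$ then land in $\{\omega_1^X,\omega_1^X+1\}$, and the value $\omega_1^X$ is excluded by the limit clause, since $\AKR(A)=\omega_1^X$ would force $\cSR(A)=\omega_1^X$. The converse direction runs symmetrically. This argument uses only the first two parts of (ii) together with the Nadel bound of \S2.7, so it may make the relativization unnecessary altogether.

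For (iii), let $\lambda$ be a limit and suppose $\cSR(A)<\lambda$. By (ii), $\AKR(A)\le\cSR(A)+1$, and $\cSR(A)+1<\lambda$ because $\lambda$ is a limit. The contrapositive is immediate. This step uses only the ``differ by at most $1$'' part of (ii) and none of the value-specific clauses.
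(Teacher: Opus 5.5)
Your main line is the paper's own. You cite (i) and the first two assertions of (ii). You obtain the value clause by relativizing the computable-structure clause of [Mon15, p.~5433] exactly as in TV-a(ii), with the limit value already covered by the limit clause. You read (iii) off ``differ by at most $1$'' because $\lambda$ is a limit. That is correct, and it is all the paper does.

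The optional direct argument, however, is not symmetric, and the half that fails is the one that matters. Write $\lambda=\omega_1^A$.
\begin{itemize}
\item From $\cSR(A)=\lambda+1$, the unordered ``differ by at most $1$'' gives $\AKR(A)\in\{\lambda,\lambda+1,\lambda+2\}$. The Nadel bound on $\AKR$ removes $\lambda+2$ and the limit clause removes $\lambda$, so this direction is fine.
\item From $\AKR(A)=\lambda+1$, the same ingredients only leave $\cSR(A)\in\{\lambda+1,\lambda+2\}$. Nothing you listed bounds $\cSR$ from above.
\end{itemize}
The bound $\cSR(A)\le\omega_1^A+1$ in \S2.7 is not an independent input: the paper derives it from precisely this top-value clause, so appealing to it here would be circular. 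Moreover, the direction from $\AKR$ to $\cSR$ is the one consumed downstream: in Lemma~W, in the $B_\lambda$ clause of Lemma~S(iv), and in the $\cSR$ half of \S2.7 itself. As written, then, the aside does not make the relativization unnecessary.

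It can be repaired by adding the oriented inequality $\cSR(A)\le\AKR(A)$, obtained from (U1) of [Mon15, Thm.~1.1]. Suppose $\rhole(\bar a)\le\lambda$ for every $\bar a$. Then in the countable $A$ the orbit of $\bar a$ is the set of realizers of its own $\Pin{\lambda}$-type. It is therefore cut out by a single countable conjunction of $\Pin{\lambda}$ formulas, so it is $\Sin{\lambda+1}$-definable without parameters. Applying (U1) at $\alpha=\lambda+1$ gives a $\Pin{\lambda+2}$ Scott sentence, hence $\cSR(A)\le\lambda+1$. With this inequality added to the first two parts of (ii) and the Nadel bound on $\AKR$, both directions go through, and the relativization is genuinely avoidable.
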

\noindent Clauses (i) and (ii) are as stated in the sources cited; (iii) is
immediate from (ii).

\begin{rthm}[(TV-b): the middle step]
For countable $A$ and $v\in\{\omega_1^A,\omega_1^A+1\}$:
\[ \SRsym(A)=v \iff \AKR(A)=v. \]
\textbf{Corollary (TV-b$^{-}$)} (lower bound): $\SRsym(A)\ge\omega_1^A \iff \AKR(A)\ge\omega_1^A$; below we use only the direction from left to right.\\
\textbf{Corollary (SR$^{\sim}$-Nadel)} (upper bound): $\SRsym(A)\le\omega_1^A+1$, and the bound is attained at the Harrison linear order (sharpness only; see \S7.H; no proof below uses the attainment).
\end{rthm}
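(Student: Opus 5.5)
The plan is to compare the two families of tuple relations directly, rather than the two ranks, and to show that at the limit level $\lambda=\omega_1^A$ they \emph{interleave cofinally}: the symmetric relations $\sim_\beta$ of [Mon15, \S3.1] and the one-sided relations $\le_\beta$ of \S2.1 differ only by a reindexing $\beta\mapsto f(\beta)$ under which $\lambda$ is closed. Concretely, I would prove by simultaneous induction on $\beta$ two inclusions on pairs of tuples of $A$ (of the same length):
\[
\bar a\sim_\beta\bar b\ \Longrightarrow\ \bar a\equiv_\beta\bar b,
\qquad
\bar a\equiv_{f(\beta)}\bar b\ \Longrightarrow\ \bar a\sim_\beta\bar b .
\]
Here $f$ is some explicit primitive recursive ordinal function; I expect $f(\beta)=2\beta$ or $\omega\cdot\beta$ to suffice. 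The first inclusion should be routine. The successor clause of $\sim_{\beta+1}$ (for every $c$ there is a $d$ with $\bar ac\sim_\beta\bar bd$, and symmetrically) gives both back-and-forth directions required by the Karp characterization of \S2.1. Limits are intersections on both sides, using the reading of the limit clause fixed in the Remark of \S2.4. The second inclusion is where one must pay: one application of $\le_{\gamma+1}$ yields an extension related by only one of $\le_\gamma$ and $\ge_\gamma$. Hence recovering the symmetric extension condition costs a bounded number of levels per successor step, and this accumulated cost is what $f$ records.

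Granting the interleaving, put $\lambda=\omega_1^A$. Since $\lambda$ is admissible, it is closed under $f$ (this is the same closure fact used after TV-a). Taking intersections over $\beta<\lambda$ therefore gives $\sim_\lambda\;=\;\equiv_\lambda$ as relations on tuples of $A$. The same closure shows that, for every tuple $\bar a$ and every $\gamma<\lambda$, some level $<\lambda$ of one hierarchy pins down the $\Lom$-type of $\bar a$ exactly when some level $<\lambda$ of the other does. Hence $\rhosym(\bar a)<\lambda\iff\rhole(\bar a)<\lambda$. We also get $\rhosym(\bar a)=\lambda\iff\rhole(\bar a)=\lambda$, since both say that level $\lambda$ suffices and no earlier level does. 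Here I should double-check that the defining clause of $\rhole$ in \S2.3, which is phrased via $\Pin{\alpha}$-types, matches $\le_\alpha$ through the Karp theorem of \S2.1, and that $\rhosym$ is defined analogously from $\sim_\alpha$.

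Next I would establish the Nadel-type bound that every tuple has $\rhole(\bar a)\le\lambda$ and $\rhosym(\bar a)\le\lambda$. By the identification $\sim_\lambda=\equiv_\lambda$, it suffices to show one of the two. I would fix $X\in\Sp(A)$ with $\omega_1^X=\lambda$ and run the standard Barwise--Nadel argument on an $X$-computable copy: $\equiv_\lambda$ on tuples is a back-and-forth system, hence determines automorphism orbits. This also yields Corollary (SR$^{\sim}$-Nadel).

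With these facts the theorem is bookkeeping. $\SRsym(A)=\lambda+1$ iff some tuple has $\rhosym=\lambda$, iff some tuple has $\rhole=\lambda$, iff $\AKR(A)=\lambda+1$. Likewise $\SRsym(A)=\lambda$ iff all tuples have rank $<\lambda$ with ranks cofinal in $\lambda$; this condition transfers by the equivalence of ``$<\lambda$'' established above together with cofinality under $f$. Corollary (TV-b$^{-}$) then follows by combining the two values with the bound $\le\lambda+1$. The main obstacle is the second inclusion of the interleaving lemma: choosing $f$ correctly and handling limit $\beta$, where the one-sided hierarchy's orientation alternates with parity. My first attempt would be to show $\bar a\equiv_{\beta+2}\bar b\Rightarrow$ the symmetric one-step condition at $\beta$, giving $f(\beta)\le\omega\cdot\beta$. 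This is crude but enough, since only closure of $\lambda$ under $f$ is used.
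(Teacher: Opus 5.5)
Your architecture matches the paper's: two tuple-level comparisons between Montalb\'an's $\sim_\beta$ and the Karp relations $\le_\beta$, under reindexings that $\lambda=\omega_1^A$ is closed under, followed by a sup-transfer and a Nadel cap. However, the inclusion you call routine, $\sim_\beta\subseteq\equiv_\beta$, is false, and the cost lies on the opposite side from where you put it.

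The relations $\le_\alpha$ of [GRT, Def.~1.2] (equivalently, the Karp characterization of \S2.1) must answer moves by arbitrary finite tuples. A $\Sin{\alpha}$ formula is a disjunction of formulas $\exists\bar y\,\psi$ with a whole block of quantifiers. By contrast, $\sim_{\beta+1}$ answers only one-element moves. The failure already occurs at $\beta=1$. Take the finite graph consisting of a triangle through $a$ and a path with centre $b$ and endpoints $u,v$. Then $a\sim_1 b$, because each of $a,b$ has a neighbour and a non-neighbour. Yet the $\Sin{1}$ formula $\exists y_1y_2\,(E(x,y_1)\wedge E(x,y_2)\wedge E(y_1,y_2))$ holds of $a$ and fails of $b$, so $a\not\equiv_1 b$.

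The correct statement is the paper's (T1), $\sim_{\omega\cdot\alpha}\subseteq{\le_\alpha}\cap{\ge_\alpha}$. A move by a length-$n$ tuple at level $\beta<\alpha$ is answered by descending to $\sim_{\omega\cdot\beta+n}$ and making $n$ one-element moves; this is exactly where the factor $\omega$ is needed.

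Your second inclusion, which you expected to be the hard one, is the cheap one. Your own ``$\equiv_{\beta+2}$'' idea is essentially the paper's (T2): $\le_{h(\beta)}\subseteq\sim_\beta$ with $h(0)=1$ and $h(\beta+1)=h(\beta)+2$. It uses the flip $\le_{\gamma+1}\subseteq\ge_\gamma$ twice per step, and starting at level $1$ means the enumeration convention for $\le_0$ never enters.

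Your endgame uses only that $\lambda$ is closed under both reindexings. So it survives once the $\omega$ is moved to the first inclusion, since $\omega\cdot\lambda=\lambda$ still yields $\sim_\lambda=\equiv_\lambda$.

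Three smaller points.
\begin{itemize}
\item The clause defining $\rhole$ is one-sided: it uses $\bar a\le_\alpha\bar b$, not $\bar a\equiv_\alpha\bar b$. Below $\lambda$ you must pass through the flip to compare it with your two-sided inclusions. At the limit $\lambda$ itself, ${\le_\lambda}={\equiv_\lambda}$ by the same flip, and this is what makes ``level $\lambda$ suffices'' mean the same thing for both ranks.
\item For the cap you need not rerun the Barwise--Nadel admissibility argument. The paper takes $\AKR(A)\le\omega_1^A+1$ from [Mon13, p.~4] and transports it to $\SRsym$ via (T1) and $\omega\cdot\lambda=\lambda$. Your route is legitimate but reproves a cited theorem.
\item Your proposal does not address the attainment clause of Corollary~(SR$^{\sim}$-Nadel). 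It follows from the theorem at the successor value, because $\omega_1^H=\omega_1^X$ and $\AKR(H)=\omega_1^X+1$ for the Harrison order (\S7.H).
\end{itemize}
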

\noindent This is the one place where we prove, rather than cite, a comparison
between rank notions. The proof below assembles it from statements in the
literature; we have not found the result itself stated anywhere, and we return
to that point in the remark after the proof.

\begin{proof}[Proof of \textup{(TV-b)} and its corollaries]
Write $\lambda:=\omega_1^A$, which is admissible ([Mon13, p.~4]), so that
$\omega\cdot\lambda=\lambda$. From the definitions cited --- the $\sim$-clauses of
[Mon15, p.~5432], read at limits as an intersection (\S2.4), together with
[GRT, p.~6, Def.~1.2 and Thm.~1.1], the latter being Karp's theorem [Kar65] ---
one derives the following two comparisons, tuple by tuple:
\begin{itemize}[leftmargin=2.1em]
\item[(T1)] $\rhosym(\bar a)\le\omega\cdot\max(\rhole(\bar a),1)$. \emph{Proof.}
First, $\sim_{\omega\cdot\alpha}\subseteq{\le_\alpha}\cap{\ge_\alpha}$ for every
$\alpha$, by induction. At $\alpha=0$: $\sim_0$ is equality of atomic type,
which is the two inclusions of $\Pin{0}$-types (the first printed line of the
$\le$-definition at $0$). At $\alpha>0$, using the second printed line of the
$\le$-definition: given $\beta<\alpha$ and $\bar d$ with $|\bar d|=n$,
monotonicity gives $\bar a\sim_{\omega\cdot\beta+n}\bar b$ (as
$\omega\cdot\beta+n<\omega\cdot(\beta+1)\le\omega\cdot\alpha$), $n$
applications of the one-element clause answer $\bar d$ with some $\bar c$,
$\bar a\bar c\sim_{\omega\cdot\beta}\bar b\bar d$, and the inductive
hypothesis gives $\bar a\bar c\ge_\beta\bar b\bar d$; the symmetric
requirement holds because $\sim$ is symmetric. Now at the level
$\omega\cdot\max(\rhole(\bar a),1)$: $\sim$-equivalence to $\bar a$ gives both
$\le_{\rhole(\bar a)}$-inclusions of types, hence (by the printed clause
defining $\rhole$, both ways) equality of $\Lom$-types, hence automorphy ---
the Scott clause at [Mon15, p.~5432]; countability of $A$ is used exactly
here. So $\rhosym(\bar a)\le\omega\cdot\max(\rhole(\bar a),1)$. \qed
\item[(T2)] $\rhole(\bar a)\le h(\rhosym(\bar a))$, $h(0)=1$,
$h(\beta+1)=h(\beta)+2$, $h(\text{limit})=\sup$. \emph{Proof.} First,
$\le_{h(\beta)}\subseteq\sim_\beta$, by induction on $\beta$, using twice per
step the flip $\le_{\gamma+1}\subseteq\ge_\gamma$ (instantiate the second
printed line at $\gamma$ with the empty extension). Base: $\bar a\le_1\bar b$
gives inclusion of $\Pin{1}$-types, and atomic formulas together with their
negations are $\Pin{1}$, so the atomic types are equal: $\bar a\sim_0\bar b$.
Successor: from $\bar a\le_{h(\beta)+2}\bar b$, the flip gives
$\bar b\le_{h(\beta)+1}\bar a$; for the forth clause, given $c$, the
$\le_{h(\beta)+1}$-condition on $\bar b$ at the extension $c$ of $\bar a$
yields $d$ with $\bar b d\ge_{h(\beta)}\bar a c$, i.e.\
$\bar a c\le_{h(\beta)}\bar b d$, and the inductive hypothesis gives
$\bar a c\sim_\beta\bar b d$; for the back clause, given $d$, the
$\le_{h(\beta)+2}$-condition on $\bar a$ yields $c$ with
$\bar a c\ge_{h(\beta)+1}\bar b d$, and one further flip lands in
$\le_{h(\beta)}$ and the inductive hypothesis as before. Limits: both sides
are intersections of the earlier stages. Now at the level
$h(\rhosym(\bar a))$: $\le$-comparability with $\bar a$ gives
$\sim_{\rhosym(\bar a)}$-equivalence, hence automorphy, hence equality of
$\Lom$-types, which is the defining condition of $\rhole$ at that level. \qed
\emph{(The two-sided form of the absorption, with $+3$ at each successor, follows by one more flip.)}
\end{itemize}
The biconditionals now follow by transferring (T1)/(T2) between the two suprema; the same sup-transfer pattern is written out in full in the proof of Proposition~(AK-$\sim$-bridge) below, with (B1)/(B2) in place of (T1)/(T2), and is summarized here clause by clause:
\begin{itemize}[leftmargin=2.4em]
\item[(floor)] the floor biconditional is the sup-transfer of (T1)/(T2) below $\lambda$ ($\omega\cdot\lambda=\lambda$, resp.\ $\lambda$ limit);
\item[(at $\lambda$)] both floors $+$ contrapositive transports;
\item[(at $\lambda+1$)] the forward direction is successor-sup attainment $+$ the (T1)-contrapositive $+$ the Nadel bound on $\AKR$ ([Mon13, p.~4]); the backward direction is the (T2)-contrapositive at the attained $\rhole=\lambda$ (lower half) $+$ the cap;
\item[(cap)] the upper bound follows in two lines from the Nadel bound on $\AKR$, (T1), and $\omega\cdot\lambda=\lambda$, and is attained at the Harrison order (\S7.H).
\end{itemize}
Two side conditions: every appeal to Karp's theorem and to the reversal is at level $\ge1$, with the base cases routed through Karp at $\alpha=1$, so that the enumeration convention for $\le_0$ is never needed; and the limit clause of \S2.4 is used only for monotonicity of $\sim$ and in the limit step of the absorption argument.
\end{proof}

\begin{rrem}[Scope of Theorem \textup{(TV-b)}]
The identification is asserted at the two values $\omega_1^A$ and $\omega_1^A+1$
only. Away from those values $\SRsym$ and $\AKR$ are distinct ranks and nothing
above compares them. The side condition used in the proof is
$\omega\cdot\lambda=\lambda$, which holds at $\lambda=\omega_1^A$ because that
ordinal is admissible; multiplicative closure above $\omega$ suffices.
\end{rrem}

\begin{rrem}[Relation to {[AK00, \S6.7]}]
Theorem~(TV-b) identifies Montalb\'an's symmetric rank $\SRsym$ [Mon15, p.~5432]
with the Ash--Knight rank $\AKR$ at the two values $\{\omega_1^A,\omega_1^A+1\}$.
It is proved above from the following statements in the literature: the
$\sim$-relations and the clause defining $\rhosym$ at [Mon15, p.~5432]; the
back-and-forth relations and Karp's theorem at [GRT, p.~6]; and the clause
defining $\rhole$ together with the Nadel bound at [Mon13, pp.~3--4]. We record
here what the nearest published statement does and does not give.

The source for $\AKR$ itself is [AK00, \S6.7, p.~98], which defines $\AKR(A)$ to
be the least ordinal greater than $\rho(\bar a)$ for all tuples $\bar a$ --- the
clause and supremum form of [Mon13, p.~3] --- so the identification asserted at
[Mon13, p.~3] is confirmed at both ends. Moreover \S6.7 states, without proof, a
value identity of the same shape as (TV-b), but for the symmetric rank
$\SRAK$ of that book:
\begin{quote}
``Neither $\mathrm{SR}$ nor $\AKR$ can take value $0$. If one of these ranks has
value $1$, or a limit ordinal, or the successor of a limit ordinal, then the
other has the same value, and the difference in value is never infinite'';
\end{quote}
instantiated at $v\in\{\omega_1^A,\omega_1^A+1\}$, this is the biconditional of
(TV-b) with $\SRAK$ in place of $\SRsym$. The corresponding upper bound
$\SRAK(A)\le\omega_1^A+1$ follows in one line from the inequality
$\mathrm{SR}\le\AKR$ stated at \S6.7 together with the Nadel bound of
[Mon13, p.~4].

This does not supply a citation for Theorem~(TV-b), because $\SRAK$ and $\SRsym$
are suprema over different families of symmetric relations: the relation
$\sim_\beta$ of [AK00] is cumulative in $\gamma$ (``for all $\gamma<\beta$'',
uniformly at successors and limits; see the footnote on p.~99, where either
tuple may be extended at any move), whereas $\rhosym$ of [Mon15] is defined at
exactly one level, with one-element extensions. The two ranks genuinely differ
in general: on $(\omega,<)$ one has $\SRAK=2$ while $\SRsym=\omega$. Whether
$\SRAK=\SRsym$ at the two values $\{\omega_1^A,\omega_1^A+1\}$ --- we refer to
this statement as $\AKbridge$ --- is likewise not available from the
literature; since the comparison with \S6.7 raises the question, we settle it
in the proposition following this remark. The proposition is consumed nowhere
in this paper: Theorem~(TV-b) is proved directly, and no proof below mentions
$\SRAK$. We note also that nothing in \S6.7 contradicts Theorem~(TV-b) or the
comparisons (T1), (T2); the value shapes listed in the sentence quoted above
positively exclude a separation for the Ash--Knight variant, and the example
$\SRAK=2\ne 3=\AKR$ given at [AK00, Example~4] falls outside that list.
\end{rrem}

\begin{rprop}[(AK-$\sim$-bridge)]
For countable $A$ and $v\in\{\omega_1^A,\omega_1^A+1\}$:
\[ \SRAK(A)=v \iff \SRsym(A)=v. \]
\emph{Floor form:} $\SRAK(A)\ge\omega_1^A \iff \SRsym(A)\ge\omega_1^A$. Both
ranks are unparameterized. The identification is asserted at the two values
only; away from them the ranks genuinely differ, by the example $(\omega,<)$
above.
\end{rprop}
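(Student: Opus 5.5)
The plan is to run the same sup-transfer as in the proof of Theorem~(TV-b), with a new pair of tuple-by-tuple comparisons (B1)/(B2) in place of (T1)/(T2). Write $\rho^{\mathrm{AK}}(\bar a)$ for the Ash--Knight symmetric rank of a tuple, built from the cumulative relations $\sim^{\mathrm{AK}}_\beta$ of [AK00, \S6.7]. The defining clause is: for every $\gamma<\beta$, either tuple may be extended by a finite tuple and answered at level $\gamma$. The rank $\rhosym(\bar a)$ is built from the one-element relations of [Mon15, p.~5432], with the limit clause read as an intersection (\S2.4).

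\textbf{(B1).} We show $\rhosym(\bar a)\le\omega\cdot\max(\rho^{\mathrm{AK}}(\bar a),1)$. The key inclusion is $\sim_{\omega\cdot\alpha}\subseteq\sim^{\mathrm{AK}}_\alpha$, proved by induction on $\alpha$ exactly as in (T1). Given $\gamma<\alpha$ and an extension $\bar d$ of length $n$, monotonicity puts us at level $\omega\cdot\gamma+n$. Then $n$ one-element moves give $\bar a\bar c\sim_{\omega\cdot\gamma}\bar b\bar d$, and the induction hypothesis applies. Symmetry of $\sim$ handles extensions on the other side. At the level $\omega\cdot\rho^{\mathrm{AK}}(\bar a)$, equivalence therefore yields AK-equivalence at the AK-rank level, hence automorphy, using countability of $A$.

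\textbf{(B2).} We show $\rho^{\mathrm{AK}}(\bar a)\le\rhosym(\bar a)+c$ for a small fixed finite offset; I expect $\sim^{\mathrm{AK}}_{\beta+1}\subseteq\sim_\beta$ to suffice, so $c=1$. The inclusion follows because a cumulative relation at $\beta+1$ already permits one-element extensions on either side, answered at level $\beta$. Limits are intersections on both sides. This direction is in fact the easy one, with the offset bounded and in particular finite.

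The transfer then goes as in the (TV-b) proof. Put $\lambda=\omega_1^A$. Admissibility gives $\omega\cdot\lambda=\lambda$, and $\lambda$ is a limit.
\begin{itemize}
\item \emph{Floor form.} By (B1), $\rho^{\mathrm{AK}}<\lambda$ on all tuples forces $\rhosym<\lambda$ on all tuples. By (B2), $\rhosym<\lambda$ forces $\rho^{\mathrm{AK}}<\lambda$, since limits absorb finite offsets.
\item \emph{Value $\lambda+1$.} We need the supremum attained at $\lambda$ on one side iff on the other. (B2) gives $\rho^{\mathrm{AK}}(\bar a)=\lambda\Rightarrow\rhosym(\bar a)\ge\lambda$. (B1) gives $\rhosym(\bar a)\ge\lambda\Rightarrow\rho^{\mathrm{AK}}(\bar a)\ge\lambda$, because $\omega\cdot\beta<\lambda$ for $\beta<\lambda$.
\item \emph{Cap.} Both ranks are $\le\lambda+1$. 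For $\SRsym$ this is Corollary~(SR$^{\sim}$-Nadel). For $\SRAK$ it is the inequality $\mathrm{SR}\le\AKR$ of \S6.7 together with the Nadel bound of [Mon13, p.~4].
\end{itemize}
Given the cap, a tuple of rank $\ge\lambda$ has rank exactly $\lambda$ on both sides, so the attainment questions match. The value $\lambda$ then follows from the floor form together with non-attainment.

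\textbf{Main obstacle.} The delicate step is (B1). The cumulative clause allows an arbitrary finite extension of either tuple at any move, and the one-element relations must simulate this. The $\omega$-factor supplies room for the $n$ moves, but one must check that answering an extension of $\bar b$ does not require alternating sides mid-sequence, which would break the count. I would first try to handle each AK move as $n$ consecutive same-side one-element moves. The $\sim$ relation is symmetric and defined with both forth and back at every level, so this should go through. The finite-offset bookkeeping in (B2) at successors is routine by comparison.
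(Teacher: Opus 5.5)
Your overall route is the paper's. You build a tuple-by-tuple sandwich between $\rho^{\mathrm{AK}}$ and $\rhosym$: your (B1) is the paper's (B2), the finite-block simulation $\sim_{\omega\cdot\alpha}\subseteq\sim^{\mathrm{AK}}_\alpha$, and your (B2) is a weakened form of the paper's (B1). You then do the sup-transfer using the caps, $\omega\cdot\lambda=\lambda$, and the limitness of $\lambda$. The simulation is handled correctly: all $n$ one-element moves are made on the side of the AK extension, which is fine because each successor clause of $\sim$ has both a forth and a back half. The value $\lambda+1$ is also handled correctly.

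There is one real gap, in the floor form and hence at the value $v=\lambda$. Your floor bullet shows that ``every tuple rank is $<\lambda$'' transfers between the two families. Since $\sup\{\rho(\bar a)+1\}\le\lambda$ iff every $\rho(\bar a)<\lambda$, this is the statement $\SRAK(A)\le\lambda\iff\SRsym(A)\le\lambda$. Given the caps, that is just the attainment question of your next bullet again.

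The floor form concerns $\sup\{\rho(\bar a)+1\}\ge\lambda$, which can hold with every tuple rank strictly below $\lambda$, the ranks being cofinal in $\lambda$. Nothing you wrote excludes, say, $\SRAK(A)=\lambda$ while the ranks $\rhosym(\bar a)$ are bounded below $\lambda$. So ``floor form plus non-attainment'' does not yet yield the value $\lambda$.

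The repair uses your two inequalities once more, to transfer unboundedness below $\lambda$. Fix $\xi<\lambda$. A tuple with $\rho^{\mathrm{AK}}(\bar a)\ge\xi+1$ has $\rhosym(\bar a)\ge\xi$ by your (B2). A tuple with $\rhosym(\bar a)>\omega\cdot\max(\xi,1)$, an ordinal $<\lambda$ by multiplicative closure, has $\rho^{\mathrm{AK}}(\bar a)>\xi$ by your (B1). This is the paper's ``preserving unboundedness in both directions'' step.

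Three smaller points.

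The offset in your (B2) is not needed. Your justification (a one-element extension allowed at the top level $\beta$ of the cumulative clause, answered at $\sim^{\mathrm{AK}}_\beta$) is exactly the inductive step for $\sim^{\mathrm{AK}}_{\beta+1}\subseteq\sim_{\beta+1}$. As phrased, it lands at $\sim_{\beta+1}$, not at $\sim_\beta$. So you get $\sim^{\mathrm{AK}}_\alpha\subseteq\sim_\alpha$ and $\rho^{\mathrm{AK}}\le\rhosym$, which is the paper's (B1).

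The offset is harmless at a limit, but the sharp form matters for the caps. You import $\SRsym\le\lambda+1$ from Corollary~(SR$^{\sim}$-Nadel), which is legitimate since (TV-b) precedes the proposition. The paper instead derives it from the $\SRAK$ cap through $\rhosym\le\omega\cdot\rho^{\mathrm{AK}}\le\omega\cdot\lambda=\lambda$, so the proposition does not rest on (T1). With $\rho^{\mathrm{AK}}\le\rhosym$ you could go the other way: obtain the $\SRAK$ cap from that corollary and drop the appeal to the inequality $\mathrm{SR}\le\AKR$ of [AK00, \S6.7]. Your $+1$ version cannot do this, since it only gives $\SRAK\le\lambda+2$.

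Finally, the closing clause on $(\omega,<)$ is not addressed. It is the computation $\SRAK=2$, $\SRsym=\omega$ recorded in the remark preceding the proposition.
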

\begin{proof}
Write $\lambda:=\omega_1^A$, admissible ([Mon13, p.~4]), so
$\omega\cdot\lambda=\lambda$ and $\lambda$ is a limit. Write
$\sim^{\mathrm{AK}}_\beta$ for the relations of [AK00, \S6.7] as read in the
preceding remark (level $0$: equality of open type; for $\beta>0$, cumulatively
in $\gamma<\beta$, two-sided extension of either tuple by an arbitrary finite
tuple, matched at $\sim^{\mathrm{AK}}_\gamma$; the footnote on p.~99), and
$\sim_\alpha$ for the relations of [Mon15, p.~5432] with the intersection reading
of \S2.4. Both families are decreasing in the level; by that monotonicity the
cumulative AK clause reduces to its top level ($\sim^{\mathrm{AK}}_{\gamma+1}$
is the two-sided tuple-extension condition at $\gamma$ alone), and at limits
both families are the intersections of the earlier levels. At level $0$ the two
clauses agree: open formulas are finite Boolean combinations of atomic ones, so
equality of open type is equality of atomic type. On a countable $A$ the two
tuple ranks have the same terminal condition: $\rhosym$'s clause targets
automorphy and the AK clause targets equality of $\Lom$-types, and these
coincide for tuples of a countable structure (automorphisms preserve all
formulas; conversely equality of $\Lom$-types implies automorphy --- the Scott
clause, exactly as in (T1); countability of $A$ is used exactly here). Two
comparisons, by induction on the level:
\begin{itemize}[leftmargin=2.4em]
\item[(B1)] $\sim^{\mathrm{AK}}_\alpha\subseteq\sim_\alpha$ for every $\alpha$.
The [Mon15] successor step is the length-one instance of the AK tuple clause at
the top level $\gamma=\alpha$; limits are intersections on both sides.
\item[(B2)] $\sim_{\omega\cdot\alpha}\subseteq\sim^{\mathrm{AK}}_\alpha$ for
every $\alpha$. First, a finite-block simulation: if
$\bar a\sim_{\beta+n}\bar b$ and $|\bar c|=n$, then $n$ applications of the
one-element clause produce $\bar d$ with $\bar a\bar c\sim_\beta\bar b\bar d$
(two-sidedly, by the symmetry of the successor clause). At the successor:
$\omega\cdot(\alpha+1)=\omega\cdot\alpha+\omega$, so a tuple move of length $n$
at the top level $\alpha$ is answered by monotonicity down to
$\sim_{\omega\cdot\alpha+n}$, the simulation, and the inductive hypothesis at
$\sim_{\omega\cdot\alpha}$; limits by sup-continuity of $\omega\cdot(-)$. The
factor $\omega$ is exact already at $\alpha=1$: on $(\omega,<)$ no finite level
of the one-element family is contained in $\sim^{\mathrm{AK}}_1$, which is the
source of the divergence $\SRAK=2$, $\SRsym=\omega$ recorded above; $s(\alpha)
=\omega\cdot\alpha$ is the minimal solution of
$s(\alpha)=\sup_{\gamma<\alpha}(s(\gamma)+\omega)$, the recursion of (T1).
\end{itemize}
Since the terminal conditions agree, (B1) and (B2) give, tuple by tuple, the
sandwich
\[ \rho^{\mathrm{AK}}(\bar a)\ \le\ \rhosym(\bar a)\ \le\
\omega\cdot\rho^{\mathrm{AK}}(\bar a), \]
where $\rho^{\mathrm{AK}}$ is the tuple rank of the AK family (each inequality
states that the defining condition of the left rank is met at the level the
right side names). Caps: $\SRAK(A)\le\lambda+1$ is the upper bound recorded in the remark above
($\mathrm{SR}\le\AKR$ at \S6.7 together with the Nadel bound on $\AKR$,
\S2.7), so every $\rho^{\mathrm{AK}}(\bar a)\le\lambda$; and then by the second
sandwich inequality $\rhosym(\bar a)\le\omega\cdot\rho^{\mathrm{AK}}(\bar a)
\le\omega\cdot\lambda=\lambda$, so $\SRsym(A)\le\lambda+1$ as well. Both caps
thus come from the family clauses and \S2.7 alone --- in particular the
proposition re-derives Corollary~(SR$^\sim$-Nadel) at $A$ without (T1) --- and
every tuple rank of either family is $\le\lambda$. The value
transfer: at $v=\lambda+1$ the sup is attained (a successor sup has a top
element), and an attained $\rho^{\mathrm{AK}}(\bar a)=\lambda$ forces
$\rhosym(\bar a)=\lambda$ by the first inequality and the cap, while an
attained $\rhosym(\bar a)=\lambda$ forces $\rho^{\mathrm{AK}}(\bar a)=\lambda$,
since $\rho^{\mathrm{AK}}(\bar a)<\lambda$ would give
$\rhosym(\bar a)\le\omega\cdot\rho^{\mathrm{AK}}(\bar a)<\lambda$ by
multiplicative closure. At $v=\lambda$ the sandwich maps each family's tuple
ranks into the other's below $\lambda$ ($\omega\cdot\xi<\lambda$ for
$\xi<\lambda$), preserving unboundedness in both directions --- for the AK
direction, given $\xi<\lambda$ pick $\bar a$ with
$\rhosym(\bar a)>\omega\cdot\xi$; then $\rho^{\mathrm{AK}}(\bar a)>\xi$. The
floor form is the same computation read below $\lambda$.
\end{proof}
\begin{rrem}
Composing the proposition with the value identity quoted above from
[AK00, \S6.7] (which is stated there without proof) would re-derive
Theorem~(TV-b) at the two values. We rely on this in neither direction:
Theorem~(TV-b) is proved directly above, and the proposition is proved from the
family clauses alone. Like Theorem~(TV-b) and the club form of Lemma~F, the
proposition is assembled from statements in the literature rather than cited;
see A.5. It is consumed in no proof in this paper.
\end{rrem}

\begin{rlem}[TV (the composite comparison)]
At $v\in\{\omega_1^A,\omega_1^A+1\}$: $\srS(A)=v \iff \cSR(A)=v$. \emph{Floor form:} $\srS(A)\ge\omega_1^A \Rightarrow \cSR(A)\ge\omega_1^A$.
\end{rlem}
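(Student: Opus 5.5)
The plan is to compose the three comparisons already set up in \S2.5. Fix a countable $A$, write $\lambda:=\omega_1^A$, and let $v\in\{\lambda,\lambda+1\}$. The chain of biconditionals is
\[ \srS(A)=v \iff \SRsym(A)=v \iff \AKR(A)=v \iff \cSR(A)=v. \]
The first link is Lemma~TV-a. At $v=\lambda$ we use clause~(i), since $\lambda$ is admissible and hence a multiple of $\omega^2$. At $v=\lambda+1$ we use the relativized clause~(ii). The second link is Theorem~(TV-b), which is stated at exactly these two values. The third link is the value clause of Lemma~TV-c(ii). Each link is asserted at the same two values $\omega_1^A,\omega_1^A+1$, and all three are computed relative to the same ordinal $\omega_1^A$ of the same structure $A$. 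So the composition is purely formal, and no tuple-level information is needed.

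For the floor form we chain three one-directional implications.
\begin{enumerate}
\item From $\srS(A)\ge\lambda$ we want $\SRsym(A)\ge\lambda$. We will first check that $\srS(A)\le\lambda+1$, i.e.\ that $\srS$ obeys the Nadel-type cap; if this is not available from [Sac07], then the step is argued directly instead. Given the cap, $\srS(A)\in\{\lambda,\lambda+1\}$, and TV-a transports this value to $\SRsym(A)$.
\item From $\SRsym(A)\ge\lambda$ we get $\AKR(A)\ge\lambda$ by Corollary~(TV-b$^{-}$), left to right.
\item From $\AKR(A)\ge\lambda$ we get $\cSR(A)\ge\lambda$ by the contrapositive form of TV-c(iii) at the limit $\lambda$.
\end{enumerate}

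The main obstacle is step~1. The floor form of TV-a is not stated in the paper, and the cap on $\srS$ is not stated either. The first thing I would try is the direct argument. Suppose $\srS(A)>\lambda+1$. Then $A$ is not atomic over $T^A_{\lambda+1}$. Sacks' analysis [Sac07, \S2] of the tower over $L(\omega_1^A,A)$ shows the tower stabilizes by $\omega_1^A+1$, which would give a contradiction. Failing that, I would prove $\SRsym(A)\ge\lambda$ directly from $\srS(A)\ge\lambda$ by a level-comparison of the tower with the $\sim$-relations.

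A possible route for that comparison is to show that $\sim_{\omega\cdot\delta}$-equivalent tuples realize the same $L^A_\delta$-type, by induction on $\delta$ in the style of (T1). Then $\SRsym(A)<\lambda$ would make $A$ atomic over some $T^A_\delta$ with $\delta<\lambda$, using $\omega\cdot\lambda=\lambda$. However, \S2.4 explicitly flags that no such level-by-level correspondence is in the literature, so this route risks touching the open question $\BRcuns$. I would therefore prefer to settle the cap on $\srS$ and reduce the floor form to the value statements.
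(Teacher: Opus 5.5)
Your proposal is correct and is essentially the paper's proof. The value form is the formal composite of TV-a, (TV-b) and TV-c(ii), and the floor form is TV-a at the two values, then Corollary~(TV-b$^{-}$) left to right, then TV-c(iii).

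The cap you were unsure of is in fact recorded in \S2.7 (``in Sacks' convention likewise $\srS(A)\le\omega_1^A+1$'', [Sac07, p.~5]). It is exactly what the paper uses to reduce the floor form to the two values; compare steps (2)--(3) of Lemma~5.9. So your fallback arguments, in particular the level-by-level comparison that would run into $\BRcuns$, are not needed.
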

\noindent This is the composite of TV-a, (TV-b) and TV-c; the lower-bound form
is TV-a at the two values together with Corollary~(TV-b$^-$) and TV-c(iii). It is
the only route by which any proof below passes between $\srS$ and $\cSR$, and it
is used at exactly the following places. Lemma~5.9, the identity
$\text{fiber}=W_\lambda$ of Corollary~5.10, and Lemma~AL(vii-2) use the lower-bound form
only. The clause of Lemma~S(iv) concerning $A_\lambda$, and hence Lemma~S(v), use
Theorem~(TV-b) at the limit value. The clause of Lemma~S(iv) concerning
$B_\lambda$, and hence Theorem~5.3(i) and (ii), use Theorem~(TV-b) at the
successor value. No other proof below passes between the two ranks.

\subsection*{2.6. The invariant $\omega_1^A$}
$\Sp(A):=\{X\in2^\omega:X\text{ computes a copy of }A\}$ is the spectrum of $A$; $\omega_1^A:=\min\{\omega_1^X:X\in\Sp(A)\}$ ([Mon13, p.~4]); the min is attained. Two descriptions of the same invariant will be used, and their
agreement is a (short) fact, not a convention:
\begin{rlem}[ID (the two descriptions of $\omega_1^A$)]
For countable $A$: $\omega_1^A$ equals the least ordinal $\mu$ such that some
$\Sigma_1$-admissible set of height $\mu$ contains a code of $A$.
\end{rlem}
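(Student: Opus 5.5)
We prove the two inequalities separately. Write $\mu_A$ for the least height of a $\Sigma_1$-admissible set containing a code of $A$. Fix $X\in\Sp(A)$ with $\omega_1^X=\omega_1^A$; the minimum is attained (\S2.6).

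\emph{Upper bound, $\mu_A\le\omega_1^A$.} Take the admissible set $L_{\omega_1^X}[X]$. By the relativized Spector--Kripke--Platek theorem, $\omega_1^X$ is the least $X$-admissible ordinal, so this is a $\Sigma_1$-admissible set of height $\omega_1^X$. It contains $X$. Since $X$ computes a copy of $A$, that copy (a real recursive in $X$) is $\Delta_1$-definable from $X$ at a finite level, so $L_{\omega_1^X}[X]$ contains a code of $A$. Hence $\mu_A\le\omega_1^X=\omega_1^A$.

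\emph{Lower bound, $\omega_1^A\le\mu_A$.} Let $M$ be a $\Sigma_1$-admissible set of height $\mu$ containing a code $c$ of $A$. The code $c$ is a real computing a copy of $A$, so $c\in\Sp(A)$ and $\omega_1^A\le\omega_1^c$. It therefore suffices to show $\omega_1^c\le\mu$, that is, that every $c$-computable well-order has order type below $\mu$. Given such a well-order $R$, it lies in $M$: it is $\Delta_1$-definable from $c$, and admissible sets containing $c$ contain every set $\Delta_1$-definable from $c$ over $\omega$. Its rank function is defined by $\Sigma$-recursion. Because $R$ is well-founded, $\Sigma$-recursion in the admissible set $M$ produces the rank function as an element of $M$. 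So $\ot(R)\in M$, and hence $\ot(R)<\mu$. Taking the supremum over all such $R$ gives $\omega_1^c\le\mu$.

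The main obstacle is the convention for ``$\Sigma_1$-admissible set''. If nonstandard (ill-founded) models were permitted, the rank-function argument would break, so we read the term as a transitive model of KP, matching [Sac07]'s $L(\alpha,T)$. With that reading both steps are standard Barwise-style facts, and we would cite [Bar75, Ch.~V] for the relativized characterization $\omega_1^X=$ least $X$-admissible ordinal. A second point to handle is that codes need to be reals with $\omega$ as universe. This is harmless, since any code in $M$ of a structure on a countable set in $M$ yields, inside $M$, a real code computable from it.
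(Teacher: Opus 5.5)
Your proof is correct. The upper bound matches the paper's, and you are slightly more careful: what $L_{\omega_1^X}[X]$ must contain is the $X$-computable copy of $A$, not merely $X$.

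The lower bound goes by a different route. The paper runs the engine (CC-core) of Lemma~CC inside the admissible set $\mathfrak A$ of height $\mu$ containing the code $X$. This builds $L_\mu[X]$ level by level and verifies $L_\mu[X]\models\mathrm{KP}$, so $\mu\in\Adm(X)$. The paper then invokes the characterization of $\omega_1^X$ as the least $X$-admissible ordinal $>\omega$ to get $\omega_1^X\le\mu$.

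You bypass the relativized constructible hierarchy altogether. Each $c$-computable well-order lies in $M$ by $\Delta_1$-separation. Its rank function is produced in $M$ by $\Sigma$-recursion, with the induction carried out in $V$ along the genuinely well-founded relation and $\Sigma_1$-collection in $M$ used at each step. Hence every such order type is an element of $M$ and so lies below $\mu$. This is in effect the proof of the easy half of that characterization, run directly in $M$ rather than in $L_\mu[X]$.

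Your route is shorter and needs neither (CC-core) nor the characterization theorem in the lower-bound direction. The paper's route yields the stronger intermediate fact $\mu\in\Adm(X)$, and it reuses machinery the paper needs anyway for Lemma~CC.

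One small caveat concerns your closing remark on non-real codes. It needs an enumeration of the domain to lie in $M$; countability in $V$ is not enough. Since codes are read as reals here, as in the paper, nothing in your argument depends on it.
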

\begin{proof}
For $X\in\Sp(A)$, $L_{\omega_1^X}[X]$ is admissible of height $\omega_1^X$ and
contains $X$; so the least such $\mu$ is $\le\omega_1^A$. Conversely, if
$\mathfrak A$ is admissible of height $\mu$ and $X\in\mathfrak A$ is a code of
$A$, then $\mu$ is admissible relative to $X$ --- this is the argument of
Lemma~CC(a) below, read over $\mathfrak A$ in place of $L(\alpha,T)$ (classical;
[Bar75, Ch.~II]) --- so $\omega_1^X\le\mu$, whence $\omega_1^A\le\mu$.
\end{proof}
\begin{rrem}[reading of {[Sac07]}'s $\omega_1$-invariants]
[Sac07] nowhere prints a definition of $\omega_1^A$ for structures in the
pages consumed here; throughout this paper every $\omega_1$-invariant imported
from [Sac07] ($\omega_1^A$, $\omega_1^B$, $\omega_1^{T,M}$) is read as the
invariant of this subsection --- in the relativized case,
$\omega_1^{T,M}:=\min\{\omega_1^X: X\in\Sp(M),\,X\ge_T t\}$ as at
[Mon13, p.~4], equivalently by Lemma~ID the least height of an admissible set
containing a code of $\langle T,M\rangle$. This reading is consistent with the
presupposition form of the one printed pin ([Sac07, p.~14]: ``$\omega_1^A=
\alpha$'' with $L(\alpha,T)$ $\Sigma_1$-admissible) and with [Mon13, p.~4]'s
rendering of the same invariant. Like the reading of ``$n$-type of a node''
(\S3.7), the flag is carried, not smoothed: it is a declared reading of
[Sac07]'s convention, consumed statement-level.
\end{rrem} Comparisons of rank at the values relevant below pass exclusively through Lemma~TV. We use repeatedly that admissible ordinals are multiples of $\omega^2$: if $L(\lambda,T)\models\mathrm{KP}$ then $\lambda$ is closed under primitive recursive ordinal functions, hence multiplicatively closed.

\subsection*{2.7. Nadel bound}
$\AKR(A)\le\omega_1^A+1$ (Nadel [Nad74]; stated at [Mon13, p.~4]). Hence $\cSR(A)\le\omega_1^A+1$, by the coincidence at the two top values in TV-c(ii). A structure is said to have \emph{high Scott rank} when $\AKR(A)\ge\omega_1^A$ ([Mon13, p.~4]); we say that $A$ \emph{attains the Nadel bound} when $\cSR(A)=\omega_1^A+1$.
In Sacks' convention likewise $\srS(A)\le\omega_1^A+1$, stated at
[Sac07, p.~5]; the four proofs below that cite this subsection for a bound on
$\srS$ use this clause and this pin.

\subsection*{2.8. Quantifier rank}
$\qr(S):=$ the least $\beta$ with $S\in\Pin{\beta}$ or $\Sin{\beta}$ ([GM23, p.~2, fn.~2]). Then $\varphi\in\Pin{\beta+1}$ for every $\beta\ge\qr(\varphi)$.

\subsection*{2.9. Codes and relative admissibility}
$t_\varphi\in 2^\omega$ is a fixed real coding $\varphi$; $\Adm(t_\varphi):=\{\lambda<\omega_1:\lambda>\omega,\ L_\lambda[t_\varphi]\models\mathrm{KP}\}$. \emph{Code-dependence caveat:} $\Adm(t_\varphi)$ is $\equiv_T$-invariant in the code but genuinely degree-dependent; codes of the same $\varphi$ can differ widely in degree, by the upward closure of the degree spectrum of $(\tc(\{\varphi\}),\in)$
($\tc$ denotes transitive closure throughout) --- a classical, external fact of
computable structure theory, taken by reference and not reproved here. (This fact is used only negatively, at Lemma~S(iii-b).) Every theorem below holds for each fixed choice of $t_\varphi$, and each existence lemma names the set of ordinals over which it applies. If $\varphi\in L^c_{\omega_1\omega}$ (the computable infinitary sentences, as in
[Mon13, \S1.1]), take $t_\varphi$ recursive; then $\Adm(t_\varphi)=\Adm$.

\subsection*{2.10. Three notions of smallness, kept distinct}
\textup{(a)} [GRT, Def.~2.2, p.~7]: $\Sin{\alpha}$-small ${}={}$ countably many $\Sin{\alpha}$-types realized (tuple level). \textup{(b)} [Mon13, Def.~1.4, p.~4]: scattered ${}={}$ at most countably many $\equiv_\alpha$-classes among the models, each $\alpha$ (model level; definitional in (H1)). \textup{(c)} [Sac07, p.~3]: Morley's notion [Mor70], combining (a) and (b) at the level of fragment types. These are not identified with one another anywhere below.

\subsection*{2.11. Admissibility relative to $T$}
Throughout, $L(\alpha,T)$ denotes the $\alpha$-th level of the constructible hierarchy relativized to $T$, with $T$ carried as an amenable predicate, as in [Sac07]. $A(T):=\{\alpha>\omega:L(\alpha,T)\text{ is }\Sigma_1\text{-admissible}\}$, in the sense used throughout [Sac07] (see p.~14 there): KP with the predicate for $T$ amenable. The relation of $A(T)$ to $\Adm(t_\varphi)$ is settled in Lemma~CC of \S4.

\subsection*{2.12. Standing hypotheses}
\textbf{(H0)} Vaught's conjecture fails. (Nothing below asserts or implies Vaught's conjecture; see \S1.3.) \textbf{(H1)} $\varphi$ is minimal in the sense of [Mon13, Def.~3.1, p.~6]. \textbf{(H2)} $\varphi$ is $\Sin{\alpha}$-small for every $\alpha<\omega_1$; this is [GRT, Prop.~2.2, p.~7]. It is consumed once, at the weak-scattering remark of \S3.7.
\begin{rrem}
We read [GRT, Prop.~2.2] under the perfect-set formulation of Vaught's conjecture: the proof given there passes from ``continuum many models'' to the conclusion that Vaught's conjecture holds, which under CH requires that formulation. Wherever only smallness at the level of models is needed, the scatteredness built into [Mon13, Def.~1.4] supplies it by definition.
\end{rrem}
\noindent\textbf{(H3)} $t_\varphi,\qr(\varphi)$ as in \S\S2.8--2.9.

\section{Definitions}

\subsection*{3.1. Crowded classes}
By Lemma~B below, for each $\beta<\omega_1$ exactly one $\equiv_\beta$-class of
models of $\varphi$ is uncountable. We call it the \emph{crowded class} at level
$\beta$ and write $\Ccl_\beta$ for it.

\subsection*{3.2. Labels}
$K_\beta:=$ the \emph{label} of $\Ccl_\beta$ in the sense of [GRT, Def.~3.1,
p.~13]: the unique member of $\Ccl_\beta$ of least categoricity rank. By
Lemma~E it exists, and $\cSR(K_\beta)=\beta$ exactly, for every
$\beta\ge\qr(\varphi)$. That $K_{\lambda+1}\in\Ccl_{\lambda+1}$ holds by
definition. (Recall from the Conventions that Proposition~3.1 of [GRT] is on
p.~12 and Definition~3.1 on p.~13.)

\subsection*{3.3. The function $f$ and the fixed-point club}
$f(\alpha):=\min\{\beta:(\forall M\models\varphi)(M\notin\Ccl_\alpha\Rightarrow
\cSR(M)<\beta)\}$, as in the proof of [GRT, Thm.~3.6, p.~14]. We put
$\Ffix:=\{\alpha:f(\alpha)\le\alpha\}$. The condition used throughout is
$f(\alpha)\le\alpha$ rather than the equality $f(\alpha)=\alpha$ appearing in
[GRT]; the difference is discussed at Lemma~F(iii).

\subsection*{3.4. Witnesses and the high split}
For a limit $\lambda$ put
\[ W_\lambda:=\{A\models\varphi:\omega_1^A=\lambda\text{ and }\cSR(A)\ge\lambda\},
\qquad \mathrm{HIGH}(\lambda):=\{A\models\varphi:\cSR(A)\ge\lambda+1\}, \]
and let $H_=(\lambda)$, $H_>(\lambda)$ be the parts of $\mathrm{HIGH}(\lambda)$
with $\omega_1^M=\lambda$ and with $\omega_1^M>\lambda$ respectively. Finally
$W_\lambda^{\mathrm{high}}:=W_\lambda\cap\{\cSR=\lambda+1\}$. Members of
$W_\lambda$ will be called \emph{witnesses at $\lambda$}, and members of
$W_\lambda^{\mathrm{high}}$ \emph{high witnesses}. All the model classes below ($W_\lambda$, $\mathrm{HIGH}(\lambda)$, the fiber of Definition~5.5, and so on) are counted up to isomorphism: ``countable'' means ``countably many isomorphism types''.

\subsection*{3.5. The two base sets}
The trichotomy of Thm.~5.1 requires only a limit
$\lambda\in\Ffix\cap(\qr(\varphi),\omega_1)$ together with a witness
$A\in W_\lambda$. Two sets of such $\lambda$ are supplied below, by two different
existence lemmas:
\[ S:=\Ffix\cap\Adm(t_\varphi)\cap(\qr(\varphi),\omega_1) \]
(supplied by Lemma~W; stationary by Thm.~5.4(i); depends on the choice of code),
and
\[ \Szero:=\Ffix\cap C_{5.3}\cap(\qr(\varphi),\omega_1) \]
(supplied by Lemma~S; a club; independent of the choice of code). Every member
of $\Szero$ lies in $A(T)$ (Lemma~S(iii-a)), hence is a limit ordinal, so the
standing requirement that $\lambda$ be a limit is automatic on $\Szero$. The
inclusion $\Szero\subseteq S$ holds when $\varphi$ has a recursive code (Lemma~CC(d)) but
fails for general codes (Lemma~S(iii-b)); the intersection $S\cap C_{5.3}$ is
stationary (Thm.~5.4(v)).

\subsection*{3.6. Four conditions on $\lambda$}
For a limit $\lambda\in\Ffix\cap(\qr(\varphi),\omega_1)$ define
\begin{align*}
x(\lambda) &:\iff \text{some }A\in W_\lambda\text{ has }\cSR(A)=\lambda;\\
y_g(\lambda) &:\iff \text{some }A\in W_\lambda^{\mathrm{high}}\text{ lies in }\Ccl_{\lambda+1};\\
y_s(\lambda) &:\iff \text{some }A\in W_\lambda^{\mathrm{high}}\text{ lies outside }\Ccl_{\lambda+1}.
\end{align*}
We say that a high witness \emph{glues} if it lies in $\Ccl_{\lambda+1}$, and
that it \emph{splits} otherwise; the subscripts $g$ and $s$ refer to these two
cases. By Theorem~5.1 a high witness that glues is isomorphic to
$K_{\lambda+1}$, while one that splits produces a second model of rank
$\lambda+1$.

For $\lambda\in\Szero$ we also set $y_B(\lambda):\iff B_\lambda\in
\Ccl_{\lambda+1}$, where $B_\lambda$ is the $\lambda$-saturated model of \S3.7.
We shall prove: $y_B(\lambda)\Rightarrow y_g(\lambda)$ and $\neg y_B(\lambda)
\Rightarrow y_s(\lambda)$, since $B_\lambda\in W_\lambda^{\mathrm{high}}$ always
(Lemma~S(iv), which uses Theorem~(TV-b)); and $y_B(\lambda)\iff B_\lambda\iso
K_{\lambda+1}\iff B_\lambda\equiv_{\lambda+1}K_{\lambda+1}$ (Lemma~A(vi)
together with Lemma~L at $\lambda+1$). Whether $y_g(\lambda)$ implies
$y_B(\lambda)$ is open; we refer to that implication as $\dq$ and return to it
in \S5.12 and \S6.

\subsection*{3.7. The hull club and the tree of nodes}
We apply the results of [Sac07] to $T:=T(\varphi)$, the theory $\{\varphi\}$
padded with the tautologies of the fragment $L_0$ generated by $\varphi$, so
that $T$ mentions every formula of $L_0$ as required by the convention at
[Sac07, p.~8]. The padding does not change the class of models:
$\Mod(T)=\Mod(\varphi)$.

\emph{The tree of nodes} ([Sac07, p.~8]): the \emph{nodes} of $\mathrm{TR}(T)$ are theories $T'$ finitarily consistent and $\omega$-complete in fragments $L_{T'}$ with $T\subseteq T'$, $L_0\subseteq L_{T'}$; level $0=$ such extensions of $T$ in $L_0$; level $\delta+1=$ such extensions of a node $S$ in $L'_S$ (the least fragment adding the conjunctions of $S$'s non-principal types; p.~9, eq.~(4.1)); a limit level $=$ unions along chains (p.~9). As stated at [Sac07, p.~8], each $T'$ has an atomic model, and the class of all such models is the class of all countable models of $T$. In particular every model of every node --- so in particular $A_\lambda$ and $B_\lambda$ below --- is a model of $\varphi$.

$C_{5.3}:=$ the club of [Sac07, Thm.~5.3, p.~14]: $C_{5.3}=\{c_\delta:\delta<\omega_1\}$, $c_\delta=\omega_1^{L(\beta_\delta,T)}$ for the transitive collapses $L(\beta_\delta,T)$ of the $\Sigma_1$-hulls ((5.2)/(5.3), pp.~14--15). $A_\lambda,B_\lambda:=$ the atomic and the $\lambda$-saturated models supplied by [Sac07, Thm.~5.3] at $\lambda\in C_{5.3}$; at $c_\delta$ the proof there takes $T_{c_\delta}$ to be the restriction of $T_{\omega_1}$ to level $c_\delta$, which is $\Delta_1^{L(c_\delta,T)}$ via the branch parameter $p$ (p.~15). We quote the definition of saturation from [Sac07, p.~14]: ``Suppose $L(\alpha,T)$ is $\Sigma_1$ admissible, $A$ is a countable model of $T$, and $\omega_1^A=\alpha$. According to (2.6), $A$ is a homogeneous model of $T^A_\alpha$; $A$ is said to be $\alpha$-saturated if every $n$-type ($n\ge1$) of $T^A_\alpha$ is realized in $A$.''

We shall also use the following, with numbering as in the December 2004 preprint; more generally, bare references of the form ``p.~$n$'', ``(2.6)'', ``(4.1)'' or ``clause~($k$)'' occurring inside proofs that work with the tree of nodes or the canonical tower are to [Sac07] (this paper has no numbered equations of its own). \textup{(a)} $\omega$-completeness, [Sac07, p.~8]: (1)~completeness---every $F\in L'$ has $F\in T'$ or $\neg F\in T'$; (2)~the disjunction property---$\bigvee_i F_i\in T'\Rightarrow F_i\in T'$ for some $i$; nodes are complete theories in their fragments. \textup{(b)} the tree analysis of a single structure, [Sac07, p.~10]: $T(\delta,A)$ (clauses (1)--(5)) with $L_{T(\delta+1,A)}=L'_{T(\delta,A)}$ (eq.~(4.1)) and the tree rank $\tr(A)$ (4.3); Prop.~4.5 (p.~10): $\tr(A)\le\srS(A)$, whose proof establishes $L^A_\delta\subseteq L_{T(\delta,A)}$ by induction; and Prop.~4.6 (p.~10). \textup{(c)} [Sac07, p.~15]: $B$ is constructed as a model of $T_{c_\delta}$ realizing all the types in $N$, where $N$ is the set of non-principal types of the node $T_{c_\delta}$, and $\omega_1^B=c_\delta$. Note that the construction on p.~15 is carried out in terms of the node theory, whereas the statement on p.~14 is in terms of the canonical tower of $B$ itself; Lemma~AL(iv) below supplies the passage between the two.

\begin{rrem}[on $n$-types of a node]
No definition of ``$n$-type of $S$'' is given in [Sac07]. Throughout this paper an $n$-type of a node $S$ means a maximal set of $L_S(\bar x)$-formulas finitarily consistent with $S$. Every use made of the notion below --- complete types of tuples realized in a model, members of the set $N$, principal types --- is insensitive to the choice among the reasonable alternatives.
\end{rrem}

\begin{rrem}[on weak scattering]
Weak scattering in the sense of [Sac07] enters at exactly two places: here, where the minimality hypothesis (H1) is used to obtain clauses (a) and (b) of that notion by way of [Sac07, Prop.~4.3]; and in the ($\Leftarrow$) direction of Lemma~JP, where the localized form at [Sac07, p.~23] is obtained through Cor.~3.2 there. The passage from (H1) to those clauses is an inference from [Sac07, Prop.~4.3] and not a quotation; we note it again at each of the two places, and at step (C1) of \S5.11, which uses the same inference.

The type-count content behind that inference is on the page: by (H2), only
countably many $\Sin{\alpha}$-types are realized in models of $\varphi$ at
each $\alpha<\omega_1$, and by Lemma~AL(vii-1) every tree fragment
$L_{T_\delta}$ lies within quantifier rank $c(\delta)<\omega_1$, so the
\emph{realized} type-sets at every node are countable. The residual content
of the [Sac07, Prop.~4.3] crossing --- consumed as stated there and not
re-derived here --- is the passage from this realized count to the type-sets
counted by the printed clauses. (H2) is consumed at this remark and nowhere
else.
\end{rrem}

\section{Lemmas}

\begin{rrem}
GRT's numbering collides at $3.1$: Proposition~3.1 is on p.~12, Definition~3.1 on p.~13, separate counters; every GRT-$3.1$ citation below carries its page.
\end{rrem}

\begin{rlem}[B (crowded class at every level)]
For every $\beta<\omega_1$: \textup{(i)} exactly one $\equiv_\beta$-class $\Ccl_\beta$ of models of $\varphi$ is uncountable, and every other class is countable; \textup{(ii)} $\beta\le\beta'\Rightarrow\Ccl_{\beta'}\subseteq\Ccl_\beta$; \textup{(iii)} at a limit $\lambda$, $\Ccl_\lambda=\bigcap_{\beta<\lambda}\Ccl_\beta$; \textup{(iv)} $\{M\models\varphi:M\notin\Ccl_\beta\}/{\iso}$ is countable, so $f$ is well-defined and $f(\beta)<\omega_1$.
\end{rlem}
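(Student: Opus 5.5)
I would derive Lemma~B directly from the minimality hypothesis (H1), the model-level scatteredness built into [Mon13, Def.~1.4], and [GRT, Thm.~3.6, p.~14], together with the elementary facts about $\equiv_\beta$ in \S2.1.

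For (i), I fix $\beta<\omega_1$. Scatteredness says there are at most countably many $\equiv_\beta$-classes among the models of $\varphi$. Since $\varphi$ has $\aleph_1$ countable models up to isomorphism, and a countable union of countable sets is countable, at least one class is uncountable. For uniqueness I use the sentence form of minimality. Each $\equiv_\beta$-class is defined among models of $\varphi$ by a single $\Lom$ sentence: the conjunction of the countably many $\Pin{\beta}$ and $\Sin{\beta}$ sentences true in one member. Countably many such sentences suffice because only countably many classes have to be separated, and at level $\beta$ one sentence suffices to separate any two classes. By (H1) each such sentence $\psi$ has either $\varphi\wedge\psi$ or $\varphi\wedge\neg\psi$ with only countably many models. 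Suppose two distinct classes $\mathcal C\ne\mathcal C'$ were both uncountable, and let $\psi$ be the sentence defining $\mathcal C$. Then $\varphi\wedge\psi$ has uncountably many models (those in $\mathcal C$), and so does $\varphi\wedge\neg\psi$ (those in $\mathcal C'$). This contradicts minimality. Alternatively, all of this can be cited directly from [GRT, Thm.~3.6, first paragraph of proof].

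For (iv), the complement of $\Ccl_\beta$ is a countable union of countable classes, hence countable up to isomorphism. Each of its members has $\cSR<\omega_1$, so the supremum of their ranks is a countable ordinal. Hence the set in the definition of $f$ is nonempty and $f(\beta)<\omega_1$.

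For (ii), let $\beta\le\beta'$. The class $\Ccl_{\beta'}$ is contained in a single $\equiv_\beta$-class, by the monotonicity $\equiv_{\beta'}\subseteq\equiv_\beta$. That $\equiv_\beta$-class contains the uncountable set $\Ccl_{\beta'}$, so it is uncountable, and by the uniqueness in (i) it is $\Ccl_\beta$.

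For (iii), the inclusion $\subseteq$ follows from (ii). For $\supseteq$, I note that by the limit clause $\equiv_\lambda=\bigcap_{\gamma<\lambda}\equiv_\gamma$, the set $\bigcap_{\beta<\lambda}\Ccl_\beta$ lies inside one $\equiv_\lambda$-class. It then remains to show that this intersection meets $\Ccl_\lambda$, which in fact contains it. Since $\Ccl_\lambda\subseteq\bigcap_{\beta<\lambda}\Ccl_\beta$ and $\Ccl_\lambda$ is nonempty, the intersection is a single $\equiv_\lambda$-class that contains $\Ccl_\lambda$, and therefore equals it.

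The main obstacle is the uniqueness step in (i). It needs the fact that a single $\equiv_\beta$-class is definable relative to $\varphi$ by one $\Lom$ sentence, given only countably many classes. I would handle this by choosing, for each pair of distinct classes, a separating $\Pin{\beta}$ sentence, and then taking countable conjunctions. If this becomes delicate, I would fall back on citing [GRT, Thm.~3.6] for (i), as the paper's attribution in \S1.2 suggests.
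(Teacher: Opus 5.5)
Your proof is correct and is essentially the paper's argument: pigeonhole over the countably many $\equiv_\beta$-classes gives existence, and uniqueness comes from an $\Lom$ axiomatization of one class by a countable conjunction of separating sentences (the content the paper cites as [GRT, Lemma~3.3]) combined with minimality, with (ii)--(iv) following from monotonicity, the limit clause $\equiv_\lambda=\bigcap_{\gamma<\lambda}\equiv_\gamma$, and countability of the complement. Your arguments for (ii) and (iii) spell out steps the paper only cites; the one wording to fix is in (i), where the defining sentence should be the conjunction of the countably many chosen separating sentences, not of all $\Pin{\beta}$ and $\Sin{\beta}$ sentences true in a member.
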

\noindent This is due to Gonzalez--Rossegger--Turetsky, [GRT, Thm.~3.6, proof
paragraph~1, p.~14]; the same dichotomy is in the proof of [Mon13, Lemma~3.3,
p.~7]. We write out the argument.
\begin{proof}
There are countably many $\equiv_\beta$-classes ((H1), model-level; [GRT, p.~14, paragraph~1] derives the count from counterexample-hood); $\aleph_1$ models force some class uncountable (pigeonhole; implicit in both sources, made explicit here). Uniqueness: two uncountable classes are disjoint $\Pin{\beta+1}$-axiomatizable subclasses ([GRT, Lemma~3.3, p.~13]), so an axiomatization $\psi$ of one splits $\varphi$ into two uncountable halves, contradicting (H1) ([GRT, p.~14, paragraph~1] mechanism; [Mon13, p.~7] same dichotomy). Part~(ii) is monotonicity; (iii) is [GRT, p.~14, paragraph~1] as stated; (iv) holds because there are countably many classes, each non-crowded one countable, and countable structures have countable $\cSR$.
\end{proof}

\begin{rlem}[L (label uniqueness, mixed ranks)]
Fix $\beta$ and an $\equiv_\beta$-class $C$. Then $C$ contains at most one model of $\cSR\le\beta$ up to ${\iso}$; and if $K\in C$ has $\cSR(K)=\gamma<\beta$ then $C=\{K\}/{\iso}$.
\end{rlem}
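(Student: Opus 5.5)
The plan rests on one standard fact: if $\cSR(K)=\gamma$, then $K$ has a $\Pin{\gamma+1}$ Scott sentence $\sigma_K$ (condition (U2) of \S2.2). Every model of $\sigma_K$ is isomorphic to $K$, since all these models are countable.

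\emph{Step 1: the second clause.} Suppose $K\in C$ and $\cSR(K)=\gamma<\beta$. Then $\gamma+1\le\beta$. Let $M\in C$, so $M\equiv_\beta K$. By monotonicity (\S2.1) this gives $M\equiv_{\gamma+1}K$. At sentence level, $M$ and $K$ therefore satisfy the same $\Pin{\gamma+1}$ sentences. In particular $M\models\sigma_K$, so $M\iso K$. Hence $C=\{K\}/{\iso}$.

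\emph{Step 2: the first clause.} Let $K,K'\in C$ with $\cSR(K),\cSR(K')\le\beta$. If either rank is $<\beta$, Step~1 gives $K\iso K'$ at once. So assume both ranks equal $\beta$. Then $\sigma_K$ is $\Pin{\beta+1}$, and $\equiv_\beta$ only transfers $\Pin{\beta}$ sentences, so Step~1 does not apply directly. This is the main obstacle: the level-$\beta$ relation is one quantifier block short of what is needed.

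To overcome it, I would use Montalbán's characterization (U1) at level $\beta$: every automorphism orbit of a tuple of $K$ is defined by a $\Sin{\beta}$ formula, and similarly for $K'$. I would then show that the relation
\[ \bar a \mapsto \bar b \iff (K,\bar a)\equiv_\beta (K',\bar b) \]
restricted to tuples of equal length is a back-and-forth system; Karp's theorem in the form of [GRT, Thm.~1.1] gives the translation between $\equiv_\beta$ and type inclusion. The argument for the forth step is as follows.
\begin{enumerate}[label=\textup{(\alph*)},leftmargin=2.2em]
\item Take $\bar a\equiv_\beta\bar b$ and $c\in K$.
\item Let $\theta(\bar x,y)$ be the $\Sin{\beta}$ formula defining the orbit of $\bar a c$ in $K$.
\item Then $K\models\exists y\,\theta(\bar a,y)$, and this formula is still $\Sin{\beta}$ when $\beta$ is a limit or $\beta\ge1$ with the existential absorbed into the block.
\item Since $\bar a\equiv_\beta\bar b$, we get $K'\models\exists y\,\theta(\bar b,y)$. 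Pick $d$ with $K'\models\theta(\bar b,d)$.
\end{enumerate}
It remains to show $\bar a c\equiv_\beta\bar b d$. I would do this symmetrically, using the orbit formula $\theta'$ of $\bar b d$ in $K'$. That formula is $\Sin{\beta}$ and holds of $\bar a c'$ for some $c'$ realizing the same orbit data. The fact to extract is that within a structure of rank $\le\beta$, the $\Sin{\beta}$ orbit formula isolates the full $\Pin{\beta}$ type of the tuple. That fact is the content of (U5)/(U1), which I will cite from [Mon15, Thm.~1.1].

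The base case $\emptyset\equiv_\beta\emptyset$ is the hypothesis $K\equiv_\beta K'$. The back step is symmetric. A back-and-forth system between countable structures yields $K\iso K'$.

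If the back-and-forth verification in Step~2 proves delicate, the fallback is [GRT, Lemma~2.3 and Def.~3.1, p.~13], which assert uniqueness of the least-rank member of an $\equiv_\beta$-class. In that case I would cite that lemma for the equal-rank case and keep Step~1 as the self-contained argument for the collapse clause.
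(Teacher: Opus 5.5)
Your Step~1 is the paper's argument for the second clause, and reducing Step~2 to the case $\cSR(K)=\cSR(K')=\beta$ is correct. The gap is in the forth step of your back-and-forth system: you never actually establish $(K,\bar a c)\equiv_\beta(K',\bar b d)$.

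The fact you plan to extract from (U1) is that the $\Sin{\beta}$ orbit formula $\theta$ of $\bar a c$ implies, inside $K$, every $\Pin{\beta}$ formula true of $\bar a c$. This gives only one of the two inequalities. For $\psi\in\Pin{\beta}$, the formula $\forall y\,(\theta(\bar x,y)\to\psi(\bar x,y))$ is again $\Pin{\beta}$. It holds of $\bar a$ in $K$, hence of $\bar b$ in $K'$, and so $(K,\bar a c)\le_\beta(K',\bar b d)$.

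The other inequality requires $\Sin{\beta}$ properties $\chi$ of $\bar a c$ to pass to $\bar b d$. The same device fails there, because $\forall y\,(\theta\to\chi)$ is not $\Pin{\beta}$. Your symmetric step does produce $c'$ with $(K',\bar b d)\le_\beta(K,\bar a c')$. But the claim that $c'$ realizes ``the same orbit data'' as $c$ is exactly what has to be shown, and the proposal does not show it.

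It can be supplied. The $\Sin{\beta}$ orbit formula of $\bar a c'$ in $K$ passes to $\bar b d$ by the second inequality, and then back to $\bar a c$ by the first. So $\bar a c$ and $\bar a c'$ are automorphic, which yields the missing inequality.

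More to the point, the symmetric system is not needed: the half you can prove already finishes the lemma. Apply it with $\bar a=\bar b=\emptyset$ to an arbitrary tuple $\bar c$ of $K$. It says that every $\bar c\in K$ has some $\bar d\in K'$ with $(K,\bar c)\le_\beta(K',\bar d)$. By the definition of the relations, this is $K'\le_{\beta+1}K$, so orbit-definability on one side buys precisely the level you identified as missing. Karp's theorem then moves the $\Pin{\beta+1}$ Scott sentence of $K'$ into $K$, so $K\iso K'$.

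This is the paper's proof, with [GRT, Prop.~3.1] supplying $K\ge_{\gamma+1}K'$ at $\gamma=\cSR(K')$. It uses orbit-definability only in $K$, and it needs no case split, since it applies verbatim whenever $\cSR(K)\le\cSR(K')\le\beta$.

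Finally, your fallback citation is misdirected. [GRT, Lemma~2.3] is an existence statement and [GRT, Def.~3.1] is a definition; the equal-rank uniqueness is [GRT, Cor.~3.2, p.~12].
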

\noindent This is [GRT, Prop.~3.1 and Cor.~3.2, p.~12], together with Karp's
theorem [Kar65] as stated at [GRT, Thm.~1.1, p.~6].
\begin{proof}
Suppose $K\equiv_\gamma K'$ with $\cSR(K)\le\cSR(K')=\gamma\le\beta$; [GRT, Prop.~3.1] at $\gamma$ gives $K\ge_{\gamma+1}K'$; Karp moves $K'$'s $\Pin{\gamma+1}$ Scott sentence into $K$; so $K\iso K'$. Equal ranks is [GRT, Cor.~3.2] as stated. Second clause: $M\equiv_\beta K\Rightarrow M\equiv_{\gamma+1}K\Rightarrow M\models$ the Scott sentence of $K\Rightarrow M\iso K$.
\end{proof}

\begin{rlem}[E (label existence with exact rank)]
For every $\beta\ge\qr(\varphi)$: $\Ccl_\beta$ is labeled ([GRT, Def.~3.1, p.~13]); $\cSR(K_\beta)=\beta$ exactly; and $K_\beta$ is, up to ${\iso}$, the unique model of $\cSR\le\beta$ in $\Ccl_\beta$.
\end{rlem}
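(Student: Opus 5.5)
We prove Lemma~E in three steps: a lower bound on the ranks of members of $\Ccl_\beta$, existence of a member of rank exactly $\beta$, and uniqueness from Lemma~L. This is the content of [GRT, Cor.~3.4/3.5, p.~13] and [GRT, Thm.~3.6, proof paragraph~2], and we write it out the same way.

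\emph{Lower bound.} Suppose some $K\in\Ccl_\beta$ has $\cSR(K)=\gamma<\beta$. The second clause of Lemma~L then gives $\Ccl_\beta=\{K\}/{\iso}$. That is a single isomorphism type, contradicting the uncountability of $\Ccl_\beta$ in Lemma~B(i). Hence every member of $\Ccl_\beta$ has $\cSR\ge\beta$. This step does not use $\beta\ge\qr(\varphi)$.

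\emph{Existence.} We need a member of $\Ccl_\beta$ with $\cSR\le\beta$, i.e.\ with a $\Pin{\beta+1}$ Scott sentence.
\begin{itemize}
\item The plan is to invoke [GRT, Lemma~2.3, p.~7] at level $\beta$. It says that a $\Sin{\beta}$-small theory, with small types at level $\beta$ as supplied by (H2), has a model realizing only $\Sin{\beta}$-supported, or principal, $\Pin{\beta}$-types. Such a model is the $\beta$-level analogue of a prime model, and by (U5) of [Mon15, Thm.~1.1] it has a $\Pin{\beta+1}$ Scott sentence.
\item We apply this to the complete $\Pin{\beta}$-theory of $\Ccl_\beta$. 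This theory is $\Pin{\beta+1}$-axiomatizable and countable by [GRT, Lemma~3.3], and it is consistent with $\varphi$ as a theory of models of $\varphi$.
\item The hypothesis $\beta\ge\qr(\varphi)$ enters here. By \S2.8, $\varphi\in\Pin{\beta+1}$, so $\varphi$ is decided at level $\beta$. Consequently every model $\equiv_\beta$ to a member of $\Ccl_\beta$ is itself a model of $\varphi$, and so lies in $\Ccl_\beta$. Without this, the prime-like model $K$ might fail to satisfy $\varphi$, and the existence step would break.
\end{itemize}
The model $K$ so produced has $\cSR(K)\le\beta$ and lies in $\Ccl_\beta$. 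Combined with the lower bound, $\cSR(K)=\beta$.

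\emph{Uniqueness.} The first clause of Lemma~L says that $\Ccl_\beta$ contains at most one model of $\cSR\le\beta$ up to ${\iso}$. So $K$ is that model, and it is the least-rank member of $\Ccl_\beta$, i.e.\ the label $K_\beta$ of [GRT, Def.~3.1].

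The main obstacle is the existence step, specifically matching the smallness hypothesis of [GRT, Lemma~2.3] with what is available here. (H2) is stated for $\varphi$, whereas we need smallness of the $\Ccl_\beta$-theory. We would argue that the $\Ccl_\beta$-theory is a $\Pin{\beta+1}$ extension of $\varphi$, so it inherits $\Sin{\beta}$-smallness from $\varphi$ because its models form a subclass. The alternative is to cite [GRT, Cor.~3.4/3.5] directly, which state exactly that crowded classes above $\qr(\varphi)$ are labeled.
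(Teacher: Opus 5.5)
Your decomposition is the paper's: the lower bound and uniqueness come from Lemma~L, and existence comes from [GRT, Lemma~3.3] plus [GRT, Lemma~2.3], with $\Sin{\beta}$-smallness inherited by a subclass of $\Mod(\varphi)$. The lower-bound and uniqueness steps are fine as written.

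The step to repair is where you use $\beta\ge\qr(\varphi)$. You infer from $\varphi\in\Pin{\beta+1}$ that $\varphi$ is ``decided at level $\beta$'', i.e.\ preserved by $\equiv_\beta$. That inference is false in general, because $\equiv_\beta$ transfers only $\Pin{\beta}$ and $\Sin{\beta}$ sentences. For example, $\omega\equiv_1\omega+1$, yet the two orders disagree on the $\Pi_2$ sentence ``there is no largest element''.

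In your route this step is load-bearing. The model $K$ produced by [GRT, Lemma~2.3] is only known to satisfy the theory you feed in. The $\Pin{\beta+1}$ sentence $\theta_\beta=\bigwedge_{i>0}\neg\psi_i$ of [GRT, Lemma~3.3] axiomatizes $\Ccl_\beta$ only relative to $\varphi$. A model of $\theta_\beta$ alone need not be $\equiv_\beta$ to anything in $\Ccl_\beta$. Without $K\models\varphi$, neither $K\in\Ccl_\beta$ nor the inheritance of smallness goes through.

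Either of two one-line fixes works.
\begin{enumerate}
\item[(a)] Use the definition of $\qr$ directly. $\varphi$ is $\Pin{\qr(\varphi)}$ or $\Sin{\qr(\varphi)}$ with $\qr(\varphi)\le\beta$, so it genuinely is $\equiv_\beta$-invariant. Then the $\equiv_\beta$-class of a member of $\Ccl_\beta$, taken among all countable structures, is exactly $\Mod(\varphi\wedge\theta_\beta)=\Ccl_\beta$.
\item[(b)] Do what the paper does: apply [GRT, Lemma~2.3] to $T_\beta:=\varphi\wedge\theta_\beta$. This is $\Pin{\beta+1}$ precisely because $\varphi\in\Pin{\beta+1}$, which is the actual role of the clause in \S2.8. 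Then $K\models\varphi$, $\Mod(T_\beta)\subseteq\Mod(\varphi)$ and $K\in\Ccl_\beta$ are all immediate.
\end{enumerate}
With either repair, your proof coincides with the paper's.
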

\noindent We assemble this from the sources cited at each step of the proof.
\begin{proof}
\emph{(1) Axiomatization.} (H2) [GRT, Prop.~2.2, p.~7] gives $\varphi$ $\Sin{\beta}$-small. [GRT, Lemma~3.3, p.~13] at any $M\in\Ccl_\beta$: $\Ccl_\beta=E(M,\beta)=\{N\models\varphi:N\equiv_\beta M\}$ is $\Pin{\beta+1}$, via the $\bigwedge_{i>0}\neg\psi_i$ description ($\psi_i\in\Pin{\beta}\cup\Sin{\beta}$ separating the $i$-th of the countably many $\equiv_\beta$-classes from $\Ccl_\beta$; countability from smallness/(H1)). Let $\theta_\beta$ be that $\Pin{\beta+1}$ sentence; put $T_\beta:=\varphi\wedge\theta_\beta$.
\emph{(2) The $\qr(\varphi)$-tail (a hypothesis left implicit at [GRT, Cor.~3.4], stated here at the point of use).} [GRT, Cor.~3.4, p.~13] derives labeledness ``directly from Lemma~3.3 and Lemma~2.3''; Lemma~2.3's hypothesis is on the theory's complexity, and the one-line proof is silent on $\varphi$'s own. Here $\varphi\in\Pin{\beta+1}\iff\beta\ge\qr(\varphi)$ (\S2.8), whence $T_\beta\in\Pin{\beta+1}$ (finite $\wedge$) exactly on the stated tail; this is the source of the $\qr(\varphi)$-tail in every base set.
\emph{(3) Existence.} $T_\beta$ is consistent ($\Ccl_\beta\ne\emptyset$---indeed uncountable, Lemma~B(i)); $\Sin{\beta}$-small ($\Mod(T_\beta)\subseteq\Mod(\varphi)$, and $\Sin{\beta}$-smallness is inherited by subclasses: the set of realized $\Sin{\beta}$-types, in the sense of [GRT, Def.~2.2], can only shrink); $T_\beta\in\Pin{\beta+1}$ by~(2). [GRT, Lemma~2.3, p.~7]: some $K\models T_\beta$ has $\cSR(K)\le\beta$. $K\models\theta_\beta$ places $K\in\Ccl_\beta$: the class is labeled; put $K_\beta:=K$.
\emph{(4) Uniqueness.} Lemma~L at $\Ccl_\beta$ ([GRT, Prop.~3.1 $+$ Cor.~3.2, p.~12]; Karp): at most one model of $\cSR\le\beta$ up to ${\iso}$.
\emph{(5) Exactness (both halves).} ``$\le\beta$'' is step~(3). ``$\ge\beta$'': if $\cSR(K_\beta)=\gamma<\beta$, then every $N\in\Ccl_\beta$ has $N\equiv_\beta K_\beta\Rightarrow N\equiv_{\gamma+1}K_\beta$ (monotonicity, $\gamma+1\le\beta$) $\Rightarrow N\models$ the $\Pin{\gamma+1}$ Scott sentence of $K_\beta$ (Karp) $\Rightarrow N\iso K_\beta$---collapsing $\Ccl_\beta$ to a single isomorphism type against Lemma~B(i).
\end{proof}

\begin{rlem}[F (the fixed-point club, and the connection with {[Mon13]})]
With $f(\alpha):=\min\{\beta:(\forall M\models\varphi)(M\notin\Ccl_\alpha\Rightarrow\cSR(M)<\beta)\}$ ([GRT, Thm.~3.6 proof paragraph~2, p.~14], with $\le$ in place of the $=$ of the fixed-point case there; see the proof of (iii)) and $\Ffix:=\{\alpha<\omega_1:f(\alpha)\le\alpha\}$: \textup{(i)} $f$ is well-defined and monotone; \textup{(ii)} $\Ffix$ is closed; \textup{(iii)} $\Ffix$ is unbounded---hence a club; \textup{(iv)} for $\alpha\in\Ffix$ with $\alpha\ge\qr(\varphi)$: $\{M\models\varphi:\cSR(M)\ge\alpha\}=\Ccl_\alpha=\{K_\alpha\}/{\iso}\sqcup\{M\in\Ccl_\alpha:\cSR(M)>\alpha\}$; \textup{(v)} at a limit $\lambda$: $\lambda\in\Ffix\iff\lambda\in C$, where $C:=\{\alpha:\text{any two models of }\varphi\text{ of }\AKR\text{-rank}\ge\alpha\text{ are }\Sin{<\alpha}\text{-elementarily equivalent}\}$---the property set of [Mon13, Lemma~3.3, p.~6], which states that $C$ contains a club.
\end{rlem}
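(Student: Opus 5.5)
Parts (i)--(iv) can be read off Lemmas~B and E by monotonicity and an $\omega$-iteration. Part~(v) is the only place where the rank transport TV-c and a counting argument are needed.

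\emph{(i) and (ii).} Well-definedness is Lemma~B(iv): only countably many isomorphism types lie outside $\Ccl_\alpha$, and each has countable $\cSR$, so the supremum of their ranks plus one is countable. Monotonicity follows from Lemma~B(ii). If $\alpha\le\alpha'$ then $\Ccl_{\alpha'}\subseteq\Ccl_\alpha$, so the set of models to be bounded only grows, and $f(\alpha)\le f(\alpha')$.

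For closedness, let $\lambda=\sup_n\alpha_n$ with $\alpha_n\in\Ffix$ and $\alpha_n<\lambda$. If $M\notin\Ccl_\lambda$, then by Lemma~B(iii) $M\notin\Ccl_{\alpha_n}$ for some $n$. Hence $\cSR(M)<f(\alpha_n)\le\alpha_n<\lambda$, which gives $f(\lambda)\le\lambda$.

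\emph{(iii).} Given $\alpha_0$, I will set $\alpha_{n+1}:=\max(f(\alpha_n),\alpha_n+1)$ and $\lambda:=\sup_n\alpha_n$. The argument of (ii) then shows $f(\lambda)\le\lambda$: a model outside $\Ccl_\lambda$ lies outside some $\Ccl_{\alpha_n}$, so its rank is $<f(\alpha_n)\le\alpha_{n+1}<\lambda$.

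Here I will remark on the choice of $\le$ over $=$. The $\omega$-iteration only yields $f(\lambda)\le\lambda$. Equality is not needed anywhere below, and the set defined with $\le$ is closed by (ii) without further argument.

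\emph{(iv).} Let $\alpha\in\Ffix$ with $\alpha\ge\qr(\varphi)$. A model outside $\Ccl_\alpha$ has $\cSR<f(\alpha)\le\alpha$, so $\{\cSR\ge\alpha\}\subseteq\Ccl_\alpha$. Conversely, by Lemma~E, $K_\alpha$ is the unique member of $\Ccl_\alpha$ of $\cSR\le\alpha$, and $\cSR(K_\alpha)=\alpha$. Every other member of $\Ccl_\alpha$ therefore has $\cSR>\alpha$. This gives both the reverse inclusion and the displayed disjoint decomposition.

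\emph{(v).} For ($\Rightarrow$), take two models of $\AKR\ge\lambda$. By TV-c(iii) both have $\cSR\ge\lambda$. Since $f(\lambda)\le\lambda$, both lie in $\Ccl_\lambda$, so they are $\equiv_\lambda$. Hence they are $\equiv_\gamma$ for every $\gamma<\lambda$, which is $\Sin{<\lambda}$-elementary equivalence at sentence level (\S2.1).

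For ($\Leftarrow$), suppose $\lambda\in C$ and $M\notin\Ccl_\lambda$ with $\cSR(M)\ge\lambda$. By TV-c(ii), $\AKR(M)\ge\lambda$, since $\AKR<\lambda$ would give $\cSR\le\AKR+1<\lambda$ at the limit $\lambda$.

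The step I expect to need care is producing a partner $N\in\Ccl_\lambda$ with $\AKR(N)\ge\lambda$; it must work without assuming $\lambda\ge\qr(\varphi)$, so Lemma~E is unavailable. I will count instead. By Lemma~L applied to each of the countably many $\equiv_{\gamma+1}$-classes, there are only countably many models of $\cSR\le\gamma$. Taking the union over $\gamma<\lambda$, only countably many models have $\cSR<\lambda$. Since $\Ccl_\lambda$ is uncountable (Lemma~B(i)), it contains some $N$ with $\cSR(N)\ge\lambda$, hence $\AKR(N)\ge\lambda$ as above.

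Now $\lambda\in C$ gives $M\equiv_\gamma N$ for all $\gamma<\lambda$. By the limit clause of \S2.1 this is $M\equiv_\lambda N$, so $M\in\Ccl_\lambda$, a contradiction. Therefore every model outside $\Ccl_\lambda$ has $\cSR<\lambda$, i.e.\ $f(\lambda)\le\lambda$, and $\lambda\in\Ffix$.
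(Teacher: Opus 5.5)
Your proof is correct and follows the paper's argument essentially step for step. The differences are cosmetic: in (iii) you iterate $\alpha_{n+1}:=\max(f(\alpha_n),\alpha_n+1)$ instead of splitting into cases on whether the $f$-orbit stalls; in (v)($\Rightarrow$) you use only the inclusion $\{\cSR\ge\lambda\}\subseteq\Ccl_\lambda$; and in (v)($\Leftarrow$) you argue by contradiction with a single partner $N$ rather than showing that the whole $\cSR\ge\lambda$ mass is one $\equiv_\lambda$-class. One terminological point: in this paper's convention the appeal to TV-c in (v) is not a ``rank transport'', a term reserved for passages between $\srS$ and $\cSR$.
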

\noindent The proof is routine; we give it in full, since the club form of
$\Ffix$ is not stated in [GRT] (see the remark inside part~(iii)).
\begin{proof}
\emph{(i)} Well-defined: $\{M\models\varphi:M\notin\Ccl_\alpha\}/{\iso}$ is countable (Lemma~B(iv)); countable structures have countable $\cSR$; $f(\alpha)=\sup\{\cSR(M)+1:M\notin\Ccl_\alpha\}<\omega_1$. Monotone: $\alpha\le\alpha'\Rightarrow\Ccl_{\alpha'}\subseteq\Ccl_\alpha$ (B(ii)) $\Rightarrow$ the outside set grows $\Rightarrow f(\alpha)\le f(\alpha')$.
\emph{(ii)} Closed: $\lambda:=\sup_i\alpha_i$ with $\alpha_i\in\Ffix$ increasing; if attained, trivial; else $\lambda$ is a limit and $M\notin\Ccl_\lambda=\bigcap_{\beta<\lambda}\Ccl_\beta$ (B(iii)) gives $M\notin\Ccl_\beta$ for some $\beta<\lambda$, hence $M\notin\Ccl_{\alpha_i}$ for any $\alpha_i\ge\beta$ (B(ii)); then $\cSR(M)<f(\alpha_i)\le\alpha_i<\lambda$. So $f(\lambda)\le\lambda$.
\emph{(iii) Unbounded.} Given $\alpha$: if $f(\alpha)\le\alpha$ then $\alpha\in\Ffix$. Else iterate; either the orbit reaches $f^{k+1}(\alpha)\le f^k(\alpha)$---then $\beta:=f^k(\alpha)\in\Ffix$ with $\beta>\alpha$---or it is strictly increasing and $\beta:=\sup_n f^n(\alpha)$ satisfies $f(\beta)\le\beta$ by the sup-case computation ([GRT, p.~14, paragraph~2]: every $N\notin\Ccl_\beta$ exits some $\Ccl_{f^n(\alpha)}$, so $\cSR(N)<f^{n+1}(\alpha)<\beta$). The fixed-point case stated in [GRT] reads $f(\alpha)=\alpha$; the operative condition throughout this paper is the weaker $f(\alpha)\le\alpha$. We note explicitly: [GRT] states and proves only the per-$\alpha$ conclusion of Theorem~3.6---$\aleph_1$-many values $\{G(\alpha)\}$ (in the notation there) at which exactly one model lives---and \emph{no} club is stated or claimed in [GRT]. The club form of $\Ffix$ is the routine closure strengthening (ii)$+$(iii), assembled here from the ingredients stated in paragraphs~1--2 of the proof there (Lemma~B(ii)(iii)(iv) $+$ the sup-case computation).
\emph{(iv)} ``$\subseteq$'' of $\{\cSR\ge\alpha\}=\Ccl_\alpha$: $M\notin\Ccl_\alpha\Rightarrow\cSR(M)<f(\alpha)\le\alpha$. ``$\supseteq$'': no member of $\Ccl_\alpha$ has $\cSR<\alpha$---else its $\Pin{\gamma+1}$ Scott sentence ($\gamma+1\le\alpha$) transfers across $\equiv_\alpha$ to all of $\Ccl_\alpha$ (Karp), collapsing the class against B(i). (The stated ``by Corollary~3.2, $C_\alpha$ has at most one model $K$ with $\mathrm{SR}(K)\le\alpha$'' uses this mixed-rank $\le$-version; Cor.~3.2 as stated is equal-ranks---the repair routes through Prop.~3.1 $+$ Scott-sentence transfer $=$ Lemma~L.) Decomposition: Lemma~E at $\alpha\ge\qr(\varphi)$ supplies $K_\alpha$ with $\cSR=\alpha$ exactly, unique at rank $\le\alpha$ (Lemma~L); the remainder of $\Ccl_\alpha$ has $\cSR>\alpha$.
\emph{(v)} ($\Leftarrow$) $\lambda\in C$, $\lambda$ limit. Per-rank counting: models of $\cSR=\beta$ inject into the $\le\aleph_0$ many $\equiv_\beta$-classes ((H1), model-level), at most one per class (Cor.~3.2); so $\le\aleph_0$ models of $\cSR<\lambda$, and $\aleph_1$-many models have $\cSR\ge\lambda$, equivalently $\AKR\ge\lambda$. (Here $\AKR$ and $\cSR$ differ by at most $1$ and coincide at limit values, by TV-c(ii); so at a limit $\lambda$ the condition ``$\ge\lambda$'' is the same for both ranks. Only TV-c is used, so this is not a rank transport.) By $C$-membership these are pairwise $\Sin{<\lambda}$-elementarily equivalent $=$ pairwise $\equiv_\lambda$ (limit-$\equiv$, \S2.1); the $\cSR\ge\lambda$ mass is one $\equiv_\lambda$-class, uncountable, hence $=\Ccl_\lambda$ (B(i)); every $M\notin\Ccl_\lambda$ has $\cSR<\lambda$: $f(\lambda)\le\lambda$. ($\Rightarrow$) $\lambda\in\Ffix$ limit: $\{\cSR\ge\lambda\}=\Ccl_\lambda$ [``$\subseteq$'' from $f(\lambda)\le\lambda$; ``$\supseteq$'' from (iv)'s collapse argument, which needs no label and no tail]; so any two models of $\AKR\ge\lambda$ ($=\cSR\ge\lambda$ at the limit, as just noted) lie in one $\equiv_\lambda$-class, hence are $\equiv_\lambda$, hence $\Sin{<\lambda}$-elementarily equivalent: $\lambda\in C$.
\end{proof}
\noindent\emph{Remark.} Part (iv) asserts uniqueness at the ranks in $\Ffix$ only, which is the conclusion of [GRT, Thm.~3.6] itself; it says nothing about every rank, and the caveat at [GRT, p.~12] already places failures of uniqueness at $\omega_1$-many ranks. Nothing here concerns $\SRp$. So neither limb of \S1.3 is in play.

\begin{rlem}[CC (coding comparison)]
Put $\rho_\varphi:=\rank(\tc(\{\varphi\}))<\omega_1$ and $\gamma_\varphi:=\omega\cdot(\rho_\varphi+\omega)$. Then:
\textup{(a)} if $\alpha>\omega$ is a limit, $L(\alpha,T)\models\mathrm{KP}$ ($T$ amenable), and $x\in L(\alpha,T)$ is a real, then $L_\alpha[x]\models\mathrm{KP}$, i.e.\ $\alpha\in\Adm(x)$;
\textup{(b)} for every code $t_\varphi$: $T\in L_{\gamma_\varphi}[t_\varphi]$, and for every limit $\alpha>\gamma_\varphi$ with $L_\alpha[t_\varphi]\models\mathrm{KP}$ also $L(\alpha,T)\models\mathrm{KP}$; hence $\Adm(t_\varphi)\setminus\gamma_\varphi\subseteq A(T)$ for every code, and $S\setminus\gamma_\varphi\subseteq\Ffix\cap A(T)\cap(\qr(\varphi),\omega_1)$;
\textup{(c)} if some code $t_\varphi$ of $\varphi$ lies in $L(c_0,T)$ ($c_0=\min C_{5.3}$), then $C_{5.3}\subseteq\Adm(t_\varphi)$ for that code;
\textup{(d)} if $\varphi\in L^c_{\omega_1\omega}$ and $t_\varphi$ is recursive, then $A(T)\subseteq\Adm(t_\varphi)=\Adm$ and, with Lemma~S(iii-a), $C_{5.3}\subseteq\Adm$.
\end{rlem}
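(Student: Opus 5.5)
We prove the four clauses in order; (a) is the only one with real content, and (b)--(d) are bookkeeping on top of it.

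\emph{Clause (a).} Let $\alpha>\omega$ be a limit with $L(\alpha,T)\models\mathrm{KP}$, and let $x\in L(\alpha,T)$ be a real. We check that $L_\alpha[x]$ is an admissible set of height $\alpha$. Fix $\xi<\alpha$ with $x\in L(\xi,T)$. The hierarchy $\langle L_\eta[x]:\eta<\alpha\rangle$ is $\Delta_1$-definable over $L(\alpha,T)$ from the parameter $x$, by $\Sigma$-recursion in KP. Hence $L_\alpha[x]$ is a $\Delta_1$-definable inner structure of $L(\alpha,T)$ with the same ordinals. We then verify $\Delta_0$-separation and $\Delta_0$-collection in $L_\alpha[x]$. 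Given a $\Delta_0$ formula with parameters in $L_\eta[x]$, $\eta<\alpha$, we translate the collection instance into a $\Sigma_1$ statement over $L(\alpha,T)$. $\Sigma$-collection in $L(\alpha,T)$ then bounds the witnesses below some $\eta'<\alpha$. This is the classical argument that admissibility passes to $L_\alpha[x]$ for $x$ in an admissible set ([Bar75, Ch.~II]). It also gives $\omega_1^x\le\alpha$, which is the form Lemma~ID uses. The main obstacle is the amenability of $T$: we must check that quantifiers over $L(\alpha,T)$ enter only through $\Sigma_1$ definitions, so that $\Sigma$-collection for the expanded language applies. We handle this by noting that KP in $L(\alpha,T)$ is formulated with $T$ as an amenable predicate, so $\Delta_0(T)$-collection is available.

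\emph{Clause (b).} Fix a code $t_\varphi$. First, $\tc(\{\varphi\})$ is decoded from $t_\varphi$ by a recursion of length $\rho_\varphi$. The fragment $L_0$ generated by $\varphi$, its tautologies (which are arithmetic in the coded syntax), and hence $T$, appear by stage $\omega\cdot(\rho_\varphi+\omega)=\gamma_\varphi$ of $L[t_\varphi]$. So $T\in L_{\gamma_\varphi}[t_\varphi]$.

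Next, take a limit $\alpha>\gamma_\varphi$ with $L_\alpha[t_\varphi]\models\mathrm{KP}$. Then $T\in L_\alpha[t_\varphi]$, so $T$ is amenable there. We show by induction on $\eta<\alpha$ that $L(\eta,T)$ is $\Delta_1$-definable in $L_\alpha[t_\varphi]$ with parameter $T$. Then $L(\alpha,T)$ is a $\Delta_1$ inner model of height $\alpha$, and KP transfers to it exactly as in (a), with the roles of the two hierarchies reversed. The inclusions $\Adm(t_\varphi)\setminus\gamma_\varphi\subseteq A(T)$ and $S\setminus\gamma_\varphi\subseteq\Ffix\cap A(T)\cap(\qr(\varphi),\omega_1)$ follow by intersecting with $\Ffix\cap(\qr(\varphi),\omega_1)$.

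\emph{Clauses (c) and (d).} For (c), let $\alpha\in C_{5.3}$. Then $\alpha\ge c_0$, so $t_\varphi\in L(\alpha,T)$, and $\alpha\in A(T)$ by Lemma~S(iii-a). Applying (a) with $x=t_\varphi$ gives $\alpha\in\Adm(t_\varphi)$; note that $\alpha$ is a limit greater than $\omega$, since it is admissible. For (d), a recursive $t_\varphi$ lies in $L(\omega+1,T)$, so (a) gives $A(T)\subseteq\Adm(t_\varphi)$. Also $L_\alpha[t_\varphi]=L_\alpha$ for every limit $\alpha>\omega$, so $\Adm(t_\varphi)=\Adm$. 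Finally, Lemma~S(iii-a) gives $C_{5.3}\subseteq A(T)$, and hence $C_{5.3}\subseteq\Adm$.
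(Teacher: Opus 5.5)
Your proposal is correct and follows the paper's route: the paper isolates your clause-(a) argument as a single transfer lemma (CC-core: for a transitive $A\models\mathrm{KP}$ of limit height $\alpha$ and $s\in A$, the $s$-relativized hierarchy is $\Sigma_1$ over $A$, and $L_\alpha[s]\models\mathrm{KP}$ by $\Sigma_1$-collection in $A$). It applies that lemma with $A=L(\alpha,T)$, $s=x$ for (a), and with $A=L_\alpha[t_\varphi]$, $s=T$ for the transfer in (b), after the same rank-by-rank decoding of $\tc(\{\varphi\})$; (c) and (d) are derived exactly as you do. The amenability issue you flag in (a) is not actually an obstacle, since KP in the language with the predicate $T$ already contains every pure-$\in$ instance you use.
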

\noindent Neither (a) nor (b) is stated in the sources cited elsewhere in this paper, so we give the proofs in full; they are classical admissibility theory (for the background see [Bar75, Ch.~I \S\S6--8; Ch.~II \S\S1, 5--6]; no numbered external statement is used). For the record, the decode recursion
behind the bound $\gamma_\varphi$ in (b) runs as follows: $t_\varphi$ codes
$(\tc(\{\varphi\}),\in)$ on a field $\subseteq\omega$; define
$F_\xi:=\{(n,x_n):\mathrm{rk}_{E_t}(n)<\xi\}$ by recursion, where
$x_n:=F_\xi''\{m:m\,E_t\,n\}$; each successor step is a $\Sigma_0$-separation
and a union over sets already present, costing finitely many levels of
$L[t_\varphi]$, and each limit step is a union, so by induction
$F_\xi\in L_{\omega\cdot\xi+\omega}[t_\varphi]$; taking $\xi=\rho_\varphi+1$
recovers $\tc(\{\varphi\})$, hence $\varphi$ and $T\cap L_{(\cdot)}$, inside
$L_{\omega\cdot(\rho_\varphi+\omega)}[t_\varphi]=L_{\gamma_\varphi}[t_\varphi]$.
The antecedent of (c) is open (Question~6 of \S6). The transfer engine for (a) and (b) is the following.

\smallskip
\noindent\textbf{(CC-core) (the transfer engine).} Let $A$ be a transitive set with $A\models\mathrm{KP}$, of limit height $\alpha:=A\cap\mathrm{Ord}>\omega$, and let $s\in A$. Then the $s$-relativized constructible hierarchy up to $\alpha$ is $\Sigma_1$-total in $A$, each level is an element of $A$, and $L_\alpha[s]:=\bigcup_{\beta<\alpha}L_\beta[s]\models\mathrm{KP}$.
\begin{proof}
\emph{(Totality.)} The successor clause $\ell\mapsto\Def(\ell)$ is $\Delta_1$ in KP (set-satisfaction is $\Delta_1$); by $\Sigma_1$-recursion (a KP theorem scheme) $\beta\mapsto L_\beta[s]$ is $\Sigma_1$-definable and total on $\alpha$ in $A$, each $L_\beta[s]\in A$; also $L_\beta[s]\in L_{\beta+1}[s]$, so levels are elements of the union. $L_\alpha[s]$ is transitive. \emph{(Extensionality, foundation)} are inherited from transitivity in $V$. \emph{(Pair, union):} for $a,b\in L_\beta[s]$, $\{a,b\}$ and $\bigcup a$ are definable over $L_\beta[s]$, so lie in $L_{\beta+1}[s]$. \emph{(Infinity):} $\omega\in L_{\omega+2}[s]$ and $\alpha>\omega$. \emph{($\Delta_0$-separation):} $a,\bar p\in L_\beta[s]$, $\psi\in\Delta_0$: $\{v\in a:\psi(v,\bar p)\}=\{v\in a:\psi^{L_\beta[s]}(v,\bar p)\}$ ($\Delta_0$-absoluteness between transitive sets) is definable over $L_\beta[s]$, so lies in $L_{\beta+1}[s]$. \emph{($\Sigma_1$-collection---the essential clause):} let $a\in L_\alpha[s]$ and $\phi(u,v)=\exists w\,\delta_0(u,v,w)$, $\delta_0\in\Delta_0$, parameters in $L_\alpha[s]$, with $L_\alpha[s]\models\forall u\in a\,\exists v\,\phi(u,v)$. $\Sigma_1$-persistence along the transitive end-extension chain gives, for $u,v$ in the union: $L_\alpha[s]\models\phi(u,v)\iff\exists\gamma<\alpha\,(v\in L_\gamma[s]\wedge L_\gamma[s]\models\phi(u,v))$. Since the level sequence and set-satisfaction are $\Sigma_1/\Delta_1$ in $A$, the matrix is $\Sigma_1$ over $A$; $A\models\Sigma_1$-collection bounds the levels by some $\beta^*<\alpha$ uniformly in $u\in a$; all witnesses lie in $L_{\beta^*}[s]\in L_\alpha[s]$, and upward persistence returns the facts.
\end{proof}
\noindent\emph{Proof of \textup{(a)}.} Immediate from (CC-core) at $A:=L(\alpha,T)$, $s:=x$.

\noindent\emph{Proof of \textup{(b)}.} \emph{(Decode, KP-free.)} $t_\varphi$ codes a well-founded extensional $E\subseteq\omega\times\omega$ with a distinguished point whose Mostowski collapse is $(\tc(\{\varphi\}),\varphi)$; $E\in L_{\omega+2}[t_\varphi]$. Collapse by $E$-rank: the stage-$\gamma$ partial collapse is definable over any level containing the stage-$<\gamma$ map together with $E$ and the (definable) $E$-rank data; each stage costs finitely many levels, the recursion has length $\le\rho_\varphi+1$, and $\omega$ levels of bookkeeping per stage more than suffice: $\tc(\{\varphi\}),\varphi\in L_{\gamma'}[t_\varphi]$ for some $\gamma'<\gamma_\varphi$. The padded theory $T=\{\varphi\}\cup\Taut(L_0)$ (\S3.7): $L_0=\bigcup_n S_n$ with $S_0=$ the subformulas of $\varphi$ and $S_{n+1}$ the one-step finitary closure, each $S_n$ definable over the previous level---$\omega$ more levels; per-sentence tautologyhood over finitely many maximal non-Boolean constituents is a finite truth-table check once $\Sent(L_0)$ is present. So $T\in L_{\gamma_\varphi}[t_\varphi]$. \emph{(Transfer.)} For a limit $\alpha>\gamma_\varphi$ with $L_\alpha[t_\varphi]\models\mathrm{KP}$: (CC-core) at $A:=L_\alpha[t_\varphi]$, $s:=T$---with the hierarchy clause now the $L(\cdot,T)$-operator, the identical $\Sigma_1$-recursion/$\Sigma_1$-collection argument gives $L(\alpha,T)\models\mathrm{KP}$.

\noindent\emph{Proof of \textup{(c)}.} $t_\varphi\in L(c_0,T)\subseteq L(c_\delta,T)$ for all $\delta$; Lemma~S(iii-a) gives $L(c_\delta,T)\models\mathrm{KP}$; apply (a). Whether the antecedent holds is open: it amounts to asking whether the hull construction of [Sac07, \S5] places $\tc(\{\varphi\})$ below $c_0$, which the text there does not settle. We record it as a property the construction may or may not have, and do not assume it.

\noindent\emph{Proof of \textup{(d)}.} $t_\varphi\in L_{\omega+1}\subseteq L(\alpha,T)$ for every $\alpha>\omega$, so (a) and (b) apply in both directions: $A(T)\subseteq\Adm(t_\varphi)=\Adm$; with Lemma~S(iii-a), $C_{5.3}\subseteq\Adm$.

\begin{rlem}[W (Gandy supply, relativized)]
For every $\lambda\in\Adm(t_\varphi)$: $W_\lambda\ne\emptyset$. Every $A\in W_\lambda$ has $\cSR(A)\in\{\lambda,\lambda+1\}$; and $W_\lambda$ is countable.
\end{rlem}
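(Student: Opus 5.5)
The plan has three parts, in the order of the statement: non-emptiness, the rank window, and countability.

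\emph{Non-emptiness.} This is a direct citation of Montalb\'an's Gandy-basis lemma [Mon13, Lemma~3.4, p.~7], relativized to the code $t_\varphi$. That lemma is stated for computable infinitary sentences and admissible $\lambda$. To relativize, I would check that its proof uses only three ingredients, each of which holds with $t_\varphi$ as oracle:
\begin{itemize}[leftmargin=1.6em,itemsep=1pt]
\item the $\Sigma^1_1$-in-$t_\varphi$ description of the set of codes of models of $\varphi$;
\item the fact that $\varphi$ has models of unbounded Scott rank, since $\varphi$ has $\aleph_1$ models and by Lemma~B(iv) only countably many of them lie outside each $\Ccl_\beta$;
\item the Gandy basis theorem relativized to $t_\varphi$ at an ordinal of $\Adm(t_\varphi)$.
\end{itemize}
The output is a model $A\models\varphi$ with $\omega_1^A=\lambda$ and $\AKR(A)\in\{\lambda,\lambda+1\}$. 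Here I would use Lemma~ID to confirm that ``$\omega_1^{A\oplus t_\varphi}=\lambda$'' in the relativized statement yields $\omega_1^A\le\lambda$. The reverse inequality comes from the lower bound: $\AKR(A)\ge\lambda$ together with the Nadel bound of \S2.7 gives $\omega_1^A+1\ge\lambda$. Since $\lambda$ is a limit, this forces $\omega_1^A\ge\lambda$. Finally, TV-c(iii) converts $\AKR(A)\ge\lambda$ into $\cSR(A)\ge\lambda$, so $A\in W_\lambda$.

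\emph{Rank window.} For any $A\in W_\lambda$ we have $\cSR(A)\ge\lambda$ by definition. The Nadel bound of \S2.7 gives $\cSR(A)\le\omega_1^A+1=\lambda+1$. Hence $\cSR(A)\in\{\lambda,\lambda+1\}$, and no rank transport through $\srS$ is needed.

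\emph{Countability.} I would split $W_\lambda$ by rank. The models of $\cSR=\lambda$ inject into the countably many $\equiv_\lambda$-classes, with at most one such model per class by Lemma~L. Likewise, the models of $\cSR=\lambda+1$ inject into the countably many $\equiv_{\lambda+1}$-classes, again with at most one per class by Lemma~L (equivalently [GRT, Cor.~3.2]). Countability of the classes at each level is (H1). Each rank level therefore contributes countably many isomorphism types, and so does their union.

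\emph{Main obstacle.} The delicate step is the relativization of [Mon13, Lemma~3.4]. Specifically, I must make sure that the ordinal produced there is $\omega_1^A$ itself, not merely $\omega_1^{A\oplus t_\varphi}$, and that the hypothesis ``$\varphi$ has high-rank models cofinally'' is available in the relativized form the proof there requires. I would handle the first point with the squeeze argument above: the Nadel bound and limitness of $\lambda$ give the lower inequality, and the upper inequality is immediate.
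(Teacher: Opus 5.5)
Your proposal is correct and follows the paper's proof: non-emptiness from [Mon13, Lemma~3.4] relativized to $t_\varphi$, the window $\cSR(A)\in\{\lambda,\lambda+1\}$ from the Nadel bound of \S2.7 plus the definition of $W_\lambda$, and countability by the Lemma~L injection into the countably many $\equiv$-classes. Your Gandy/Nadel squeeze pinning $\omega_1^A=\lambda$ (with TV-c(iii) in place of TV-c(ii)) simply spells out what the paper compresses into ``the conclusion pins $\omega_1^A=\lambda$''.

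Two small points of bookkeeping. First, your relativization inventory should name the relativized Sacks characterization, namely that every $\lambda\in\Adm(t_\varphi)$ is $\omega_1^X$ for some $X\ge_T t_\varphi$. The paper lists this explicitly, and without it the Gandy basis theorem cannot be applied at an arbitrary $\lambda\in\Adm(t_\varphi)$. Second, unboundedness of ranks does not follow from Lemma~B(iv) alone. It also needs the collapse argument that every member of $\Ccl_\beta$ has $\cSR\ge\beta$ (the ``$\supseteq$'' step in the proof of Lemma~F(iv)).
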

\noindent The unrelativized statement is [Mon13, Lemma~3.4, p.~7]; the
relativization is routine and follows the pattern of [Mon13, Cor.~3.5].
\begin{proof}
The cited lemma is stated in [Mon13] for $L^c$ sentences and plain admissibles; for general $\varphi$ relativize to $t_\varphi$ ($\lambda=\omega_1^X$, $X\ge_T t_\varphi$, Sacks' characterization as quoted at [Mon13, p.~4], relativized---routine); the Gandy-basis proof runs over $L^{c,X}$; the conclusion pins $\omega_1^A=\lambda$, $\AKR(A)\in\{\lambda,\lambda+1\}$; transport to $\cSR$ at these values is the TV-c side (limit value: TV-c(ii); successor: the same clause); $\srS$ is not involved. The $\{\lambda,\lambda+1\}$ pin for \emph{every} member of $W_\lambda$ is the Nadel bound of \S2.7 $+$ the definition (no admissibility used there); countability is the Lemma~L injection. This is the only lemma below that uses $\lambda\in\Adm(t_\varphi)$, and hence the only one restricted to the set $S$.
\end{proof}

\begin{rlem}[S (the models of {[Sac07, Thm.~5.3]} on the hull club)]
\textup{(i)} \textup{([Sac07, Prop.~5.2, p.~14])} $T(\varphi)$ admits a $\Delta_1^{L(\omega_1,T)}$ branch $T_{\omega_1}$ of $\mathrm{TR}(T)$ such that for all countable $\beta$, $T_\beta$ has an atomic model of $\srS$ exactly $\beta$. \textup{(ii)} \textup{([Sac07, Thm.~5.3, p.~14])} There are such a branch and a club $C_{5.3}\subseteq\omega_1$ with, for all $\lambda\in C_{5.3}$: an atomic $A_\lambda\models T_\lambda$, $\srS(A_\lambda)=\lambda$; a $\lambda$-saturated $B_\lambda\models T_\lambda$, $\srS(B_\lambda)=\lambda+1$; the $A$'s an expanding chain, $A_\beta\subset A_\gamma$ elementary w.r.t.\ the language of $T_\beta$. Clauses (i) and (ii) are consumed
statement-level throughout: the branch construction of [Sac07, Prop.~5.2 and
Thm.~5.3] is cited, not reproved here. The pin $\omega_1^{B_\lambda}=\lambda$ is statement-level (p.~14's $\alpha$-saturation definition presupposes $\omega_1^A=\alpha$ with $L(\alpha,T)$ $\Sigma_1$-admissible) and explicit in the proof (p.~15: ``$\omega_1^B=c_\delta$''). The pin is read
at the invariant of \S2.6, per the reading remark there.
\textup{(iii-a)} $C_{5.3}\subseteq A(T)$.
\textup{(iii-b)} For any code $t$ of $\varphi$ with $\omega_1^t>c_0$ (and such codes exist), $C_{5.3}\not\subseteq\Adm(t)$; the inclusion does hold for any code $t_\varphi\in L(c_0,T)$ (Lemma~CC(c), whose antecedent is open) and when $\varphi$ has a recursive code (Lemma~CC(d)).
\textup{(iii-c)} $\Szero:=\Ffix\cap C_{5.3}\cap(\qr(\varphi),\omega_1)$ is a club.
\textup{(iv)} For $\lambda\in\Szero$: $\cSR(A_\lambda)=\lambda$, and $B_\lambda\in W_\lambda^{\mathrm{high}}$ ($\cSR(B_\lambda)=\lambda+1$, $\omega_1^{B_\lambda}=\lambda$); the clause about $A_\lambda$ uses Theorem~\textup{(TV-b)} at the limit value, the clause about $B_\lambda$ at the successor value.
\textup{(v)} $A_\lambda\iso K_\lambda$ for every $\lambda\in\Szero$, and the labels $\{K_\lambda:\lambda\in\Szero\}$ inherit the elementary-chain structure of [Sac07, Thm.~5.3] relative to the branch languages.
\end{rlem}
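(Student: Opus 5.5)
Clauses (i) and (ii) are consumed at statement level from [Sac07, Prop.~5.2 and Thm.~5.3], together with the pin $\omega_1^{B_\lambda}=\lambda$ read at the invariant of \S2.6. The work lies in (iii)--(v), which I would prove in that order.

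\emph{(iii-a).} Each $c_\delta=\omega_1^{L(\beta_\delta,T)}$ comes from the collapse of a $\Sigma_1$-hull. The proof of [Sac07, Thm.~5.3] states that $L(c_\delta,T)$ is $\Sigma_1$-admissible, and this is exactly the presupposition of the $\alpha$-saturation definition at p.~14 applied to $B_{c_\delta}$. So $C_{5.3}\subseteq A(T)$, taken from the construction.

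\emph{(iii-b).} Suppose $\omega_1^t>c_0$. Since $\omega_1^t$ is the least ordinal $>\omega$ admissible relative to $t$, no ordinal $\le c_0<\omega_1^t$ lies in $\Adm(t)$. In particular $c_0\notin\Adm(t)$, so $C_{5.3}\not\subseteq\Adm(t)$. To see that such codes exist, use the upward closure of the degree spectrum of $(\tc(\{\varphi\}),\in)$ from \S2.9: pick a code of degree above some real $X$ with $\omega_1^X>c_0$. The positive cases are Lemma~CC(c),(d).

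\emph{(iii-c).} $\Szero$ is an intersection of three clubs: $\Ffix$ (Lemma~F(ii),(iii)), $C_{5.3}$, and a final segment of $\omega_1$. Hence it is a club.

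\emph{(iv).} Fix $\lambda\in\Szero$; it is admissible by (iii-a), so $\lambda$ is a limit and a multiple of $\omega^2$.
\begin{itemize}
\item For $B_\lambda$: we have $\srS(B_\lambda)=\lambda+1=\omega_1^{B_\lambda}+1$. Lemma~TV at the successor value gives $\cSR(B_\lambda)=\lambda+1$, and $B_\lambda\models\varphi$ by \S3.7. Hence $B_\lambda\in W^{\mathrm{high}}_\lambda$.
\item For $A_\lambda$: Lemma~TV needs $\omega_1^{A_\lambda}$, which is not pinned by [Sac07]. I would avoid this by applying TV-a(i) directly at $v=\lambda$, a multiple of $\omega^2$, unrelativized; this gives $\SRsym(A_\lambda)=\lambda$. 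I would then rerun the sup-transfer of (T1)/(T2) at $\lambda$, which needs only $\omega\cdot\lambda=\lambda$ and the limit property, to get $\AKR(A_\lambda)=\lambda$. Finally TV-c(ii) at limit values gives $\cSR(A_\lambda)=\lambda$.
\end{itemize}

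\emph{(v).} $A_\lambda\models\varphi$ has $\cSR=\lambda\ge\lambda$. By Lemma~F(iv) it lies in $\Ccl_\lambda$, and by Lemma~E/Lemma~L it is the unique model there of rank $\le\lambda$, so $A_\lambda\iso K_\lambda$. Transport the chain of [Sac07] along these isomorphisms.

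The main obstacle is the $A_\lambda$ clause of (iv). Lemma~TV is stated only at $\omega_1^A$, so I must either verify $\omega_1^{A_\lambda}=\lambda$ or use the route above, which needs only multiplicative closure of $\lambda$ and not $\lambda=\omega_1^{A_\lambda}$. I would try that route first. I would check that the limit-value half of the (TV-b) argument indeed uses nothing about $\omega_1^A$ beyond $\omega\cdot\lambda=\lambda$; the remark on the scope of (TV-b) suggests it does not.
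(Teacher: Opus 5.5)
Your proposal is correct and follows the paper's proof clause by clause:
\begin{itemize}
\item (iii-a) rests on the same two grounds: the presupposition of $\alpha$-saturation at [Sac07, p.~14], and the admissibility claim at p.~15. There the paper adds the one-line passage from the expanded structure $L[c_\delta,T;T_{c_\delta},N]$ to its reduct $L(c_\delta,T)$, which your paraphrase glosses.
\item (iii-b) uses $c_0<\omega_1^t$ together with the upward closure of the degree spectrum.
\item (iii-c) is an intersection of three clubs.
\item (v) follows from Lemma~F(iv) together with Lemmas~E and~L.
\end{itemize}
For $B_\lambda$ the pin $\omega_1^{B_\lambda}=\lambda$ is available, so your use of Lemma~TV at the successor value is exactly the paper's.

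The one real divergence is the $A_\lambda$ clause of (iv). The paper cites ``Theorem~(TV-b) at the limit value'' as a black box. But (TV-b) is stated only for $v\in\{\omega_1^A,\omega_1^A+1\}$. Applied to $A_\lambda$ at $v=\lambda$ it would need $\omega_1^{A_\lambda}=\lambda$, which the paper itself leaves open: Lemma~FL gives only $\ge$, and the question is recorded at (C4) and Question~4.

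Your route avoids this. You use TV-a(i) unrelativized, then the (T1)/(T2) sup-transfer run at $\lambda$ itself, then the limit clause of TV-c(ii). This never mentions $\omega_1^{A_\lambda}$, and it is what actually justifies the step; the Scope remark after (TV-b) points to it. One small correction: the hypothesis you need there is $\omega\cdot\lambda=\lambda$, i.e.\ $\omega\cdot\xi<\lambda$ and $h(\xi)<\lambda$ for $\xi<\lambda$. This comes from the multiplicative closure of the admissible $\lambda$, not from $\lambda$ being a multiple of $\omega^2$ alone; $\omega^2$ itself fails it.
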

\noindent\emph{Interface note for (i):} the proof is ``By Proposition~4.8''; [Sac07, Prop.~4.8, p.~11] requires $L(\alpha,T)$ $\Sigma_2$-admissible, $T$ scattered below $\alpha$, models of arbitrarily high $\mathrm{SR}<\alpha$---applied at $\alpha=\omega_1$, where $\Sigma_2$-admissibility of $L(\omega_1,T)$ is automatic by regularity and scattered-below-$\omega_1$ is (H1) via [Sac07, Prop.~4.3, p.~9]; this is why [Sac07, Prop.~5.2] carries no admissibility hypothesis.

\smallskip
\noindent\emph{Proof of \textup{(iii-a)}.} Two independent grounds. First,
$B_\lambda$ is $\lambda$-saturated, and $\alpha$-saturation is defined at
[Sac07, p.~14] only under the standing supposition that $L(\alpha,T)$ is
$\Sigma_1$-admissible. Second, [Sac07, p.~15] states: ``the structure $L[c_\delta,T;T_{c_\delta},N]\ldots$ is $\Sigma_1$ admissible because no subset of $c_\delta$ in $L(\beta_\delta,T)$ can define a counting of $\omega_1^{L(\beta_\delta,T)}$.'' $\Sigma_1$-admissibility of the expansion implies that of the reduct $L(c_\delta,T)$ (one line). Hence $c_\delta\in A(T)$ for every $\delta$.

\noindent\emph{Proof of \textup{(iii-b)}.} For a code $t$ with $\omega_1^t>c_0$: no ordinal in $(\omega,\omega_1^t)$ is $t$-admissible, so $c_0\in C_{5.3}\setminus\Adm(t)$. Codes of $\varphi$ of arbitrarily high degree exist, by the upward closure
recalled in \S2.9, and nothing bounds the degree of $t_\varphi$. The reason the inclusion cannot hold
in general: any real code $X$ of a countable structure of ordinal height $\beta$ yields an $X$-arithmetic well-ordering of $\omega$ of type $\beta$, so $\omega_1^X>\beta$---the internal $\omega_1$ of the hull collapse and the $\omega_1$ of a code of it are never equal.

\noindent\emph{Proof of \textup{(iii-c)}.} $\Ffix$ is a club (Lemma~F), $C_{5.3}$ is a club ([Sac07, p.~15]), and the tail $(\qr(\varphi),\omega_1)$ is a club. By (iii-b), the inclusion $\Szero\subseteq S$ fails for general codes; see Thm.~5.4(v).

\smallskip
\noindent Parts (i) and (ii) are quoted from [Sac07, Prop.~5.2 and Thm.~5.3,
p.~14], with proof details from p.~15 and the description of the tree from p.~8.
Part (iii) is proved in the three clauses just given. Parts (iv) and (v) are
proved below, using Theorem~(TV-b) at the two values indicated.
\begin{proof}[Proof of (iv), (v)]
Every model of a branch node models $\varphi$---definitional from statement (p.~8; padding note \S3.7). $\lambda\in C_{5.3}\Rightarrow\lambda\in A(T)$ (iii-a) $\Rightarrow L(\lambda,T)\models\mathrm{KP}\Rightarrow\lambda$ is a multiple of $\omega^2$ (\S2.6); [Mon15, p.~5432]: $\srS=\SRsym$ at multiples of $\omega^2$ (TV-a(i)), so $\SRsym(A_\lambda)=\lambda$; Theorem~(TV-b) at the limit value gives $\AKR(A_\lambda)=\lambda$; [Mon15, p.~5433] (TV-c(ii)): $\AKR=\cSR$ at limits, so $\cSR(A_\lambda)=\lambda$. For $B_\lambda$: $\srS(B_\lambda)=\lambda+1=\omega_1^{B_\lambda}+1$, the top value; TV-a(ii) gives $\SRsym(B_\lambda)=\lambda+1$; Theorem~(TV-b) at the successor value gives $\AKR(B_\lambda)=\lambda+1$; TV-c(ii) gives $\cSR(B_\lambda)=\lambda+1$; with $\omega_1^{B_\lambda}=\lambda$, $B_\lambda\in W_\lambda^{\mathrm{high}}$. For (v): $\cSR(A_\lambda)=\lambda$ and $\lambda\in\Ffix$ put $A_\lambda\in\Ccl_\lambda$ (Lemma~F(iv)); Lemma~L makes it the rank-$\lambda$ member: $A_\lambda\iso K_\lambda$; the chain clause is (ii)'s, transported along the isomorphisms.
\end{proof}

\begin{rlem}[A (assembly)]
For a limit $\lambda\in\Ffix\cap(\qr(\varphi),\omega_1)$: \textup{(i-a)} $W_\lambda$ is countable; \textup{(i-b)} $W_\lambda\ne\emptyset$ whenever $\lambda\in\Adm(t_\varphi)$ (Lemma~W) or $\lambda\in C_{5.3}$ (Lemma~S(iv)); \textup{(ii)} $M\in\mathrm{HIGH}(\lambda)\Rightarrow\omega_1^M\ge\lambda$; \textup{(iii)} $H_=(\lambda)$ countable, $W_\lambda^{\mathrm{high}}=W_\lambda\cap H_=(\lambda)$, $H_>(\lambda)$ uncountable; \textup{(iv)} $\Ccl_\lambda=\{K_\lambda\}/{\iso}\sqcup\mathrm{HIGH}(\lambda)$ and $\Ccl_{\lambda+1}\subseteq\mathrm{HIGH}(\lambda)$; \textup{(v)} $\{K_\lambda\}$ is its own $\equiv_{\lambda+1}$-cell; \textup{(vi)} $A\in\Ccl_{\lambda+1}\iff A\equiv_{\lambda+1}K_{\lambda+1}$.
\end{rlem}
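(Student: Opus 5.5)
Most of the clauses follow by unwinding the definitions against Lemmas~B, E, F, L and W. The one clause with genuine content is~(ii), which I would route through the structure of the Nadel bound. Throughout, fix a limit $\lambda\in\Ffix$ with $\lambda>\qr(\varphi)$, so that Lemma~E supplies $K_\lambda$ and $K_{\lambda+1}$ with $\cSR(K_\lambda)=\lambda$ and $\cSR(K_{\lambda+1})=\lambda+1$.

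\emph{Clauses (iv), (v), (vi).} I would start here, since (i) and (iii) lean on them.
\begin{itemize}[leftmargin=1.6em]
\item By Lemma~F(iv), $\Ccl_\lambda=\{\cSR\ge\lambda\}=\{K_\lambda\}/{\iso}\sqcup\{M\in\Ccl_\lambda:\cSR(M)>\lambda\}$. The second summand is exactly $\mathrm{HIGH}(\lambda)$, because every $M$ with $\cSR(M)\ge\lambda+1$ already lies in $\{\cSR\ge\lambda\}=\Ccl_\lambda$.
\item For $\Ccl_{\lambda+1}\subseteq\mathrm{HIGH}(\lambda)$: we have $\Ccl_{\lambda+1}\subseteq\Ccl_\lambda$ by Lemma~B(ii). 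Suppose $K_\lambda\in\Ccl_{\lambda+1}$. Then every member of $\Ccl_{\lambda+1}$ is $\equiv_{\lambda+1}K_\lambda$, hence satisfies the $\Pin{\lambda+1}$ Scott sentence of $K_\lambda$ (Karp). This collapses $\Ccl_{\lambda+1}$ to a single isomorphism type, contradicting Lemma~B(i).
\item The same Scott-sentence transfer gives (v): any $M\equiv_{\lambda+1}K_\lambda$ is isomorphic to $K_\lambda$.
\item Clause (vi) is immediate, since $K_{\lambda+1}\in\Ccl_{\lambda+1}$ (\S3.2) and $\Ccl_{\lambda+1}$ is a single $\equiv_{\lambda+1}$-class.
\end{itemize}

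\emph{Clauses (i-a), (i-b), (iii).}
\begin{itemize}[leftmargin=1.6em]
\item For (i-a): every $A\in W_\lambda$ has $\cSR(A)\in\{\lambda,\lambda+1\}$ by the Nadel bound of \S2.7. The rank-$\lambda$ members are isomorphic to $K_\lambda$ by (iv). For the rank-$(\lambda+1)$ members, Lemma~L says each $\equiv_{\lambda+1}$-class contains at most one model of $\cSR\le\lambda+1$. Scatteredness (H1) gives countably many such classes, so there are countably many such members.
\item (i-b) is just Lemma~W, respectively Lemma~S(iv).
\item For (iii), the identity $W^{\mathrm{high}}_\lambda=W_\lambda\cap H_=(\lambda)$ is definitional once (ii) and the Nadel bound are in hand. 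Countability of $H_=(\lambda)$ is the same Nadel-plus-Lemma~L argument: with $\omega_1^M=\lambda$, Nadel pins $\cSR(M)=\lambda+1$.
\item For uncountability of $H_>(\lambda)$: $\Ccl_{\lambda+1}$ is uncountable (Lemma~B(i)) and contained in $\mathrm{HIGH}(\lambda)=H_=(\lambda)\sqcup H_>(\lambda)$, by (ii) and (iv). Since $H_=(\lambda)$ is countable, $H_>(\lambda)$ must be uncountable.
\end{itemize}

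\emph{Clause (ii), the main obstacle.} The claim is that $\cSR(M)\ge\lambda+1$ forces $\omega_1^M\ge\lambda$. I would argue by contradiction. Suppose $\mu:=\omega_1^M<\lambda$. The Nadel bound gives $\cSR(M)\le\mu+1$, and since $\lambda$ is a limit, $\mu+1<\lambda<\lambda+1$. This contradicts $\cSR(M)\ge\lambda+1$.

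This step seems almost too direct, so the obstacle is really whether the Nadel bound of \S2.7 applies at an arbitrary $M$, which need not have prescribed $\omega_1^M$. It should: \S2.7 transports $\AKR\le\omega_1^A+1$ to $\cSR$ through TV-c(ii), using the fact that $\AKR$ and $\cSR$ differ by at most $1$. I would check that this transport is valid off the two top values. The inequalities $\cSR\le\AKR+1$ together with the limit coincidence give $\cSR(M)\le\mu+2<\lambda$ in any case, which suffices for the contradiction. So no rank transport through $\srS$ is needed.
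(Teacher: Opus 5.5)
Your proposal is correct and follows the paper's proof essentially clause by clause: the Nadel bound for (ii); Lemma~F(iv) with Lemmas~E and~L for the decomposition in (iv); Scott-sentence transfer for (v); and the Lemma~L injection into the countably many $\equiv_{\lambda+1}$-classes for (i-a) and (iii). The one variation is in proving $\Ccl_{\lambda+1}\subseteq\mathrm{HIGH}(\lambda)$: you exclude $K_\lambda$ from $\Ccl_{\lambda+1}$ by Scott-sentence transfer, whereas the paper uses $\cSR(K_{\lambda+1})=\lambda+1$ and Lemma~L to force every other member above $\lambda+1$; both work, and (ii) needs no limit or transport check at all, since $\lambda+1\le\cSR(M)\le\omega_1^M+1$ already gives $\lambda\le\omega_1^M$.
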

\noindent All parts are routine consequences of Lemmas B, E, L, W and S.
\begin{proof}
(ii) By the Nadel bound (\S2.7), $\cSR(M)\le\omega_1^M+1$; so $\cSR(M)\ge\lambda+1$ forces $\omega_1^M\ge\lambda$. (iv) $\Ccl_\lambda=\{K_\lambda\}/{\iso}\sqcup\mathrm{HIGH}(\lambda)$: Lemma~F(iv) at $\lambda$ gives $\{M:\cSR(M)\ge\lambda\}=\Ccl_\lambda$, and within $\Ccl_\lambda$ the part of rank exactly $\lambda$ is $\{K_\lambda\}$ up to ${\iso}$ (Lemmas E and L), the rest being $\mathrm{HIGH}(\lambda)$. $\Ccl_{\lambda+1}\subseteq\mathrm{HIGH}(\lambda)$: $K_{\lambda+1}\in\Ccl_{\lambda+1}$ has $\cSR=\lambda+1$ exactly (Lemma~E at $\lambda+1$), and Lemma~L allows at most one member of $\Ccl_{\lambda+1}$ of $\cSR\le\lambda+1$, so every other member has $\cSR>\lambda+1$. (iii) Every $M\in H_=(\lambda)$ has $\cSR(M)=\lambda+1$ exactly (the Nadel bound at $\omega_1^M=\lambda$), so Lemma~L injects $H_=(\lambda)$ into the countably many $\equiv_{\lambda+1}$-classes ((H1)): $H_=(\lambda)$ is countable. $W_\lambda^{\mathrm{high}}=W_\lambda\cap H_=(\lambda)$ because on $\omega_1^M=\lambda$ the conditions $\cSR=\lambda+1$ and $\cSR\ge\lambda+1$ coincide. $H_>(\lambda)$ is uncountable because $\mathrm{HIGH}(\lambda)\supseteq\Ccl_{\lambda+1}$ is uncountable (Lemma~B(i) at $\lambda+1$) while $H_=(\lambda)$ is countable. (v) By Karp's theorem on $K_\lambda$'s $\Pin{\lambda+1}$ Scott sentence, $M\equiv_{\lambda+1}K_\lambda\Rightarrow M\iso K_\lambda$. (vi) is definitional: $\Ccl_{\lambda+1}$ is a single $\equiv_{\lambda+1}$-class containing $K_{\lambda+1}$. (i-a) is the Lemma~L injection of $W_\lambda$ into the $\equiv_{\lambda+1}$-classes of models of $\cSR\le\lambda+1$; (i-b) is the named supplies.
\end{proof}

\begin{rlem}[AL (alignment)]
Setting: $\lambda\in\Szero$, so $\lambda\in C_{5.3}\subseteq A(T)$ (Lemma~S(iii-a)), $L(\lambda,T)\models\mathrm{KP}$, $\lambda$ multiplicatively (hence additively) closed; $T_\delta$ ($\delta\le\lambda$) the branch nodes along the $\Delta_1$ branch (increasing, fragments unioned; [Sac07, pp.~8--9]); $n$-types per \S3.7. Let $M$ be a countable model of $T_\lambda$. Then:
\textup{(i) (branch uniqueness)} for every $\delta\le\lambda$: $T(\delta,M)=T_\delta$, and for any node $T'$ on level $\delta$ of $\mathrm{TR}(T)$: $M\models T'\iff T'=T_\delta$;
\textup{(ii) (restriction identity)} $L^M_\lambda\subseteq L_{T_\lambda}$ and $T^M_\lambda=T_\lambda\cap L^M_\lambda$;
\textup{(iii) (type extension)} every $n$-type of $T^M_\lambda$ extends to an $n$-type of $T_\lambda$;
\textup{(iv) (saturation coherence)} if $M$ realizes every $n$-type of $T_\lambda$, then $M$ realizes every $n$-type of $T^M_\lambda$; when $\omega_1^M=\lambda$ this is precisely the $\lambda$-saturation of [Sac07, p.~14];
\textup{(v) (node-homogeneity)} if $\omega_1^M=\lambda$ and $\bar a,\bar b\in M^n$ have the same complete $T^M_\lambda$-type---in particular if $\tp_{L_{T_\lambda}}(\bar a)=\tp_{L_{T_\lambda}}(\bar b)$, by \textup{(ii)}---then some $\sigma\in\Aut(M)$ has $\sigma(\bar a)=\bar b$ coordinatewise;
\textup{(vi) (principality descent)} if $\omega_1^M=\lambda$ and $\bar b\in M$ has principal $T^M_\lambda$-type with generator $g\in L^M_\lambda$, then the $L_{T_\lambda}$-type of $\bar b$ is principal over $T_\lambda$ with the same generator; contrapositively, a realized non-principal $T_\lambda$-type restricts to a realized non-principal $T^M_\lambda$-type;
\textup{(vii-1) (fragment complexity)} every $L_{T_\delta}$-formula ($\delta\le\lambda$) has infinitary complexity $\qr<\lambda$, and hence at limit $\lambda$ every sentence of $T_\lambda$ transfers across $\equiv_\lambda$;
\textup{(vii-2)} every member of $\Ccl_\lambda$ models $T_\lambda$; in particular $K_{\lambda+1}\models T_\lambda$.
\end{rlem}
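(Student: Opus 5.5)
I will prove the clauses of Lemma~AL in roughly the order stated, since each later clause consumes the earlier ones. The organizing fact is that the branch $T_\delta$ ($\delta\le\lambda$) is linearly ordered by inclusion, and that nodes on a fixed level are complete, $\omega$-complete theories in their fragments ([Sac07, p.~8], clause (a) of \S3.7).

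\emph{Clause (i).} I argue by induction on $\delta\le\lambda$ that $T(\delta,M)=T_\delta$. At level $0$, $M\models T_\lambda\supseteq T_0$. The theory $T(0,M)$ is the complete $L_0$-theory of $M$, so completeness of $T_0$ in $L_0$ forces the two to be equal. At a successor, $T(\delta+1,M)$ lives in $L'_{T(\delta,M)}=L'_{T_\delta}$ by eq.~(4.1) and the inductive hypothesis. It is the theory of $M$ in that fragment, and $T_{\delta+1}\subseteq T_\lambda$ is true in $M$ and complete there, so again the two coincide. Limits are unions on both sides. The uniqueness clause follows the same way: a node $T'$ on level $\delta$ with $M\models T'$ lives in $L_{T_\delta}$, and the complete theory of $M$ in that fragment is $T_\delta$.

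\emph{Clauses (ii)--(iv).} For (ii), Prop.~4.5 of [Sac07] gives $L^M_\delta\subseteq L_{T(\delta,M)}$ by induction, and (i) turns this into $L^M_\lambda\subseteq L_{T_\lambda}$. Since both $T^M_\lambda$ and $T_\lambda$ are truth theories of $M$ (the latter by completeness and $M\models T_\lambda$), we get $T^M_\lambda=T_\lambda\cap L^M_\lambda$. For (iii), I extend a type finitarily consistent with $T^M_\lambda$ to a maximal finitarily consistent set of $L_{T_\lambda}$-formulas. Consistency with $T_\lambda$ comes from (ii), because any finite conjunction from the type is satisfiable in $M$ modulo the restriction. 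For (iv), if $M$ realizes every $T_\lambda$-type then, by (iii), it realizes every extension, hence every $T^M_\lambda$-type. When $\omega_1^M=\lambda$ this is verbatim the saturation definition at [Sac07, p.~14].

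\emph{Clauses (v) and (vi).} For (v), I invoke (2.6) of [Sac07]: when $\omega_1^M=\lambda$ with $L(\lambda,T)$ admissible (which holds here by Lemma~S(iii-a)), $M$ is homogeneous for $T^M_\lambda$. Equal $T^M_\lambda$-types therefore give an automorphism. Equal $L_{T_\lambda}$-types restrict via (ii) to equal $T^M_\lambda$-types. For (vi), suppose $g$ isolates the $T^M_\lambda$-type of $\bar b$. Every realizer of $g$ then has the same $T^M_\lambda$-type as $\bar b$, so by (v) it is an automorphic image of $\bar b$ and hence has the same $L_{T_\lambda}$-type. So $T_\lambda\vdash g\to\psi$ for every $\psi$ in that type. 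Here I use completeness of $T_\lambda$: the sentence $\forall\bar x(g\to\psi)$ is true in $M$, hence lies in $T_\lambda$.

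\emph{Clause (vii).} This is the main obstacle. For (vii-1) I need a bound on quantifier rank, and I would try an induction along the branch. Conjunctions $\bigwedge p$ added at level $\delta+1$ raise complexity by at most a bounded amount. The difficulty is keeping the limit stages below $\lambda$. For this I use that the branch is $\Delta_1^{L(\lambda,T)}$ and that $L(\lambda,T)$ is admissible: $\Sigma_1$-recursion shows the map $\delta\mapsto\sup\qr(L_{T_\delta})$ is $\Sigma_1$ over $L(\lambda,T)$ and total for $\delta<\lambda$, hence takes values below $\lambda$. The case $\delta=\lambda$ is then the union of these stages, each bounded below $\lambda$.

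The transfer clause in (vii-1) follows because each sentence of $T_\lambda$ is $\Pin{\gamma}$ for some $\gamma<\lambda$, and $\equiv_\lambda=\bigcap_{\gamma<\lambda}\equiv_\gamma$. For (vii-2), the label $K_\lambda\cong A_\lambda$ (Lemma~S(v)) models $T_\lambda$ and lies in $\Ccl_\lambda$. Every member of $\Ccl_\lambda$ is $\equiv_\lambda K_\lambda$, so it models $T_\lambda$ by the transfer. Finally $K_{\lambda+1}\in\Ccl_{\lambda+1}\subseteq\Ccl_\lambda$ by Lemma~B(ii).
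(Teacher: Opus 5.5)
Your clauses (i)--(vi) follow the paper's proof closely:
\begin{itemize}
\item induction along the branch for (i) (the paper runs it simultaneously with the uniqueness half, which you need in order to know that a level-$\delta$ node true in $M$ has fragment $L_{T_\delta}$);
\item Prop.~4.5 plus (i) for (ii);
\item Lindenbaum for (iii);
\item Sacks' (2.6) for (v);
\item completeness of $T_\lambda$ for (vi).
\end{itemize}
There are two small departures. Your consistency check in (iii) is semantic, via satisfiability of finite parts of $q$ in $M\models T_\lambda$, where the paper argues syntactically through (ii). In (v) you read ``homogeneous'' as directly yielding automorphisms, whereas the paper extracts the one-step extension property from the proof of (2.6) and runs the countable back-and-forth. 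The admissibility doing the work there is that of $L(\omega_1^M,M)$, not $L(\lambda,T)$.

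The real divergence is in (vii), and both routes are viable. For (vii-1), the limit stages are not actually an obstacle. With $c(\delta):=\sup\{\qr(F)+1:F\in L_{T_\delta}\}$, the paper gets $c(0)\le\qr(\varphi)+\omega$, $c(\delta+1)\le c(\delta)+\omega$, and continuity at limits. Hence $c(\delta)\le\qr(\varphi)+\omega\cdot(1+\delta)<\lambda$ for $\delta<\lambda$, by additive and multiplicative closure together with the tail clause $\qr(\varphi)<\lambda$ of $\Szero$. This is pure ordinal arithmetic and never looks at how the branch is defined.

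Your route trades this for the construction-specific fact that the branch is $\Delta_1^{L(\lambda,T)}$. That can be made to work, but as phrased (an internal $\Sigma_1$-recursion that is total below $\lambda$) it has two costs:
\begin{itemize}
\item It needs each $T_\delta$, $\delta<\lambda$, to be an element of $L(\lambda,T)$. That in turn needs the non-principal type set of each $T_\delta$ to lie in $L(\lambda,T)$, so that the successor fragment can be formed there. This is a weak-scattering input you leave unstated.
\item It hides where the tail clause is used.
\end{itemize}

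For (vii-2) you anchor the transfer at $K_\lambda\iso A_\lambda\models T_\lambda$ via Lemma~S(v). The paper anchors it at $B_\lambda\models T_\lambda$, placing $B_\lambda$ in $\Ccl_\lambda$ from $\srS(B_\lambda)=\lambda+1$ via the floor form of Lemma~TV and Lemma~F(iv). Both are non-circular, since S(iv)--(v) do not use AL. Yours is shorter and uses the label itself. However, it consumes the exact limit-value identity of Theorem~(TV-b) through S(iv), whereas the paper's needs only Corollary~(TV-b$^-$). That is why AL(vii-2) appears in the paper's list of rank transports as using the lower-bound form only.
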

\noindent The proofs of the individual clauses follow.

\noindent\emph{Proof of \textup{(i)}} (simultaneous induction on $\delta$). $\delta=0$: level-$0$ nodes are finitarily consistent $\omega$-complete extensions of $T$ in $L_0$ (p.~8); if $M\models T'$ then, by completeness in $L_0$ (p.~8, (1)) and truth, $T'=\Th_{L_0}(M)$; the tree analysis sets $T(0,M)=\Th_{L_0}(M)$ with $L_{T(0,M)}=L_0$ (p.~10, (1)); in particular $T_0=T(0,M)$. Successor $\delta+1$: a level-$(\delta+1)$ node $T'$ is an $\omega$-complete finitarily consistent extension of some level-$\delta$ node $S$ in $L'_S$ (p.~9); $M\models T'\Rightarrow M\models S\Rightarrow S=T_\delta=T(\delta,M)$ (IH); then $L'_S=L'_{T(\delta,M)}=L_{T(\delta+1,M)}$ (p.~10, (4)); $T'$ is complete in $L'_{T_\delta}$ and true in $M$, so $T'=\Th_{L_{T(\delta+1,M)}}(M)=T(\delta+1,M)$ (p.~10, (5)). At $T':=T_{\delta+1}$: $T_{\delta+1}=T(\delta+1,M)$. Limit $\mu\le\lambda$: a level-$\mu$ node is a union along a chain (p.~9); $M\models T'\Rightarrow$ each chain member $=T_\beta$ (IH) $\Rightarrow T'=\bigcup_{\beta<\mu}T_\beta=T_\mu$; and $T(\mu,M)=\bigcup_{\beta<\mu}T(\beta,M)=T_\mu$ (p.~10, (2)(3)).

\noindent\emph{Proof of \textup{(ii)}.} The induction in the proof of Prop.~4.5 (p.~10) gives $L^M_\delta\subseteq L_{T(\delta,M)}$ for all $\delta$; by (i), $L_{T(\delta,M)}=L_{T_\delta}$, so $L^M_\lambda\subseteq L_{T_\lambda}$. Then $T^M_\lambda=\Th_{L^M_\lambda}(M)$ (canonical-tower clause~(3), [Sac07, p.~4]) $=\Th_{L_{T_\lambda}}(M)\cap L^M_\lambda=T_\lambda\cap L^M_\lambda$, the last equality by (i) at $\delta=\lambda$.

\noindent\emph{Proof of \textup{(iii)}.} Let $q$ be an $n$-type of $T^M_\lambda$. First, $T_\lambda\cup q(\bar c)$ is finitarily consistent: otherwise finitely many $\gamma_1,\dots,\gamma_m\in q$ yield $T_\lambda\vdash_{\mathrm{fin}}\sigma:=\forall\bar x\,\neg(\gamma_1\wedge\cdots\wedge\gamma_m)$; $\sigma\in L^M_\lambda$ (fragment closure under finitary $\wedge,\neg,\forall$); a finitarily consistent theory complete in its fragment is $\vdash_{\mathrm{fin}}$-closed within it, so $\sigma\in T_\lambda\cap L^M_\lambda=T^M_\lambda$ ((ii)), contradicting $q$'s consistency with $T^M_\lambda$. Second, Lindenbaum in the countable fragment: enumerate $L_{T_\lambda}(\bar x)$ and extend $q$ maximally, keeping finitary consistency with $T_\lambda$ at each stage---an $n$-type of $T_\lambda$ extending $q$.

\noindent\emph{Proof of \textup{(iv)}.} An $n$-type $q$ of $T^M_\lambda$ extends by (iii) to an $n$-type $q^+$ of $T_\lambda$; a realizer of $q^+$ realizes $q$. The p.~14 identification is as stated at $\alpha=\lambda=\omega_1^M$ with $L(\lambda,T)$ $\Sigma_1$-admissible (S(iii-a)). This supplies the step, left implicit in [Sac07], between the construction at [Sac07, p.~15] (``$B$ realizes all the types in $N$''; node terms) and the statement at [Sac07, p.~14] (``$\alpha$-saturated''; tower terms). By Lemma~BF's Step~P below, ``realizes all of $N$'' already implies ``realizes every $n$-type of $T_\lambda$''.

\noindent\emph{Proof of \textup{(v)}.} $T^M_\lambda=T^M_{\omega_1^M}$. By [Sac07, (2.6), p.~5], $M$ is a homogeneous model of $T^M_{\omega_1^M}$; the proof given there (pp.~5--6, (2.9)--(2.11)) establishes the extension form: if $p\subseteq q$ are an $n$- and $(n{+}1)$-type of $T^M_{\omega_1^M}$ with $M\models p(\bar a)\wedge p(\bar b)\wedge\exists y\,q(\bar a,y)$, then $M\models q(\bar b,d)$ for some $d$ (mechanism as stated: $\Sigma_1$-definably chosen refuting witnesses, $\Sigma_1$-admissibility of $L(\omega_1^M,M)$ bounds the refuting levels, the $\forall y\,\neg q_{\delta_1}$-fact transfers along the shared complete top-fragment type; see (2.11) there). The closure of the extension property over the countable $M$ is the usual
iteration, written out here once and cited elsewhere as AL(v): enumerate
$M=\{m_k:k<\omega\}$; set $(\bar a_0,\bar b_0):=(\bar a,\bar b)$; at stage
$2k$, apply the extension form with $p:=\tp_{T^M_\lambda}(\bar a_k)=
\tp_{T^M_\lambda}(\bar b_k)$ and the witness $m_k$ adjoined on the $\bar a$-side
to obtain $d$ with $\tp_{T^M_\lambda}(\bar a_k m_k)=\tp_{T^M_\lambda}(\bar b_k d)$;
at stage $2k+1$, symmetrically absorb $m_k$ on the $\bar b$-side. Types match
at every stage by construction, so the union of the finite maps
$\bar a_k\mapsto\bar b_k$ is total, surjective, and preserves all
$L_{T_\lambda}$-formulas --- in particular atomic formulas and their negations
--- hence is an automorphism $\sigma\in\Aut(M)$ with $\sigma(\bar a)=\bar b$.

\noindent\emph{Proof of \textup{(vi)}.} $g\in L^M_\lambda\subseteq L_{T_\lambda}$ ((ii)). For $F\in\tp_{L_{T_\lambda}}(\bar b)$: any $\bar c$ with $M\models g(\bar c)$ has the same complete $T^M_\lambda$-type as $\bar b$ (generation over $T^M_\lambda$), hence is automorphic to $\bar b$ by (v), hence $M\models F(\bar c)$. So $\forall\bar x(g\to F)\in T_\lambda=\Th_{L_{T_\lambda}}(M)$ ((i)); and $\exists\bar x\,g\in T_\lambda$ likewise. Contrapositive: a realized non-principal $T_\lambda$-type restricts to a realized non-principal $T^M_\lambda$-type.

\noindent\emph{Proof of \textup{(vii-1)}.} $c(\delta):=\sup\{\qr(F)+1:F\in L_{T_\delta}\}$. $c(0)\le\qr(\varphi)+\omega$ (it is the tail clause $\qr(\varphi)<\lambda$ in
the definition of $\Szero$ that keeps these bounds below $\lambda$). Successor: $L'_{T_\delta}$ adds $\bigwedge p$ for every non-principal $n$-type $p$ of $T_\delta$ (eq.~(4.1), p.~9); $\qr(\bigwedge p)\le c(\delta)+1$; finitary closure gives $c(\delta+1)\le c(\delta)+\omega$. Limits: unions (p.~9). By induction $c(\delta)\le\qr(\varphi)+\omega\cdot(1+\delta)<\lambda$ for $\delta<\lambda$ (tail clause of $\Szero$; multiplicative and additive closure). $L_{T_\lambda}=\bigcup_{\delta<\lambda}L_{T_\delta}$; a sentence of $\qr<\lambda$ lies in $\Pin{\gamma}\cup\Sin{\gamma}$ for some $\gamma<\lambda$ and transfers across $\equiv_\gamma\supseteq\equiv_\lambda$ ($\equiv_\lambda=\bigcap_{\gamma<\lambda}\equiv_\gamma$, \S2.1; Karp).
\smallskip
\noindent\emph{Proof of \textup{(vii-2)}.} $B_\lambda\models T_\lambda$ (stated: $B$ is a model of the node $T_{c_\delta}$; [Sac07, p.~15]). $B_\lambda\in\Ccl_\lambda$: $\srS(B_\lambda)=\lambda+1$ ([Sac07, p.~14]) gives $\cSR(B_\lambda)\ge\lambda$, by the lower-bound form of Lemma~TV (this is a rank transport, through Corollary~(TV-b$^-$)); then Lemma~F(iv) at $\lambda\in\Ffix\cap(\qr(\varphi),\omega_1)$. For any $M\in\Ccl_\lambda$: $M\equiv_\lambda B_\lambda$; by (vii-1) every sentence of $T_\lambda$ transfers; $M\models T_\lambda$. $K_{\lambda+1}\in\Ccl_{\lambda+1}\subseteq\Ccl_\lambda$ (Lemma~B(ii)).

\begin{rlem}[BF (a back-and-forth argument on maps preserving fragment types)]
Let $\lambda\in\Szero$; let $M,M'$ be countable models of $T_\lambda$ with $\omega_1^M=\omega_1^{M'}=\lambda$ which realize the same $n$-types of $T_\lambda$ (every $n\ge1$); let $\bar c\in M^{<\omega}$, $\bar d\in M'^{<\omega}$ (possibly empty) with $\tp_{L_{T_\lambda}}(\bar c)=\tp_{L_{T_\lambda}}(\bar d)$. Then there is an isomorphism $\sigma:M\iso M'$ with $\sigma(\bar c)=\bar d$.
\end{rlem}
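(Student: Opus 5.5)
The plan is a standard back-and-forth between $M$ and $M'$ over $L_{T_\lambda}$-types of finite tuples. The invariant carried at every stage is a pair of finite tuples $\bar a\in M^{<\omega}$, $\bar b\in M'^{<\omega}$ with $\tp_{L_{T_\lambda}}(\bar a)=\tp_{L_{T_\lambda}}(\bar b)$. We start from $(\bar c,\bar d)$. We enumerate $M=\{m_k\}$ and $M'=\{m'_k\}$, and at even stages absorb $m_k$ on the left and at odd stages absorb $m'_k$ on the right. The union of the finite maps preserves every $L_{T_\lambda}$-formula, in particular atomic formulas and their negations, so it is the required isomorphism with $\sigma(\bar c)=\bar d$. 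Everything therefore reduces to a single \emph{extension step}, which I split into two parts: an existence part (Step~P) and a transfer part using node-homogeneity, Lemma~AL(v).

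\emph{Extension step.} Let $\tp_{L_{T_\lambda}}(\bar a)=\tp_{L_{T_\lambda}}(\bar b)=:p$ and $m\in M$, and put $q:=\tp_{L_{T_\lambda}}(\bar a m)$. This $q$ is an $(n{+}1)$-type of $T_\lambda$ realized in $M$, so by hypothesis it is realized in $M'$, say by $\bar b' e$. Its restriction to $\bar x$ is $p$, so $\tp_{L_{T_\lambda}}(\bar b')=p=\tp_{L_{T_\lambda}}(\bar b)$. Now apply Lemma~AL(v) inside $M'$, which is legitimate because $\omega_1^{M'}=\lambda$. Same $L_{T_\lambda}$-type implies same $T^{M'}_\lambda$-type by AL(ii), so AL(v) gives $\tau\in\Aut(M')$ with $\tau(\bar b')=\bar b$. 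Then $d:=\tau(e)$ satisfies $\tp_{L_{T_\lambda}}(\bar b d)=q$, since automorphisms preserve all $\Lom$-formulas. The back step is symmetric, using homogeneity of $M$ and $\omega_1^M=\lambda$.

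\emph{Step~P and the empty-tuple case.} For the empty tuple the invariant holds because both $M$ and $M'$ satisfy the complete theory $T_\lambda$; by AL(i), $T(\lambda,M)=T_\lambda=T(\lambda,M')$. The paper also wants a Step~P: realizing all types in $N$ implies realizing every $n$-type of $T_\lambda$. Here principal types are realized in any model of the complete node theory, since $\exists\bar x\,g\in T_\lambda$ together with completeness suffices. That is not needed for the lemma as stated, because the hypothesis already quantifies over all $n$-types; I would record it only as a remark.

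\emph{Main obstacle.} The delicate point is that AL(v) gives homogeneity with respect to the tower types $T^{M'}_\lambda$, while the invariant is stated with $L_{T_\lambda}$-types. Only the direction ``same $L_{T_\lambda}$-type $\Rightarrow$ same $T^{M'}_\lambda$-type'' is needed, and it follows from the inclusion $L^{M'}_\lambda\subseteq L_{T_\lambda}$ in AL(ii). I also need to check that AL(v) applies to $M'$ as a model of $T_\lambda$ with $\omega_1^{M'}=\lambda$ and no saturation hypothesis. That holds as stated, because homogeneity comes from [Sac07, (2.6)] at $\omega_1^{M'}=\lambda$, with $\Sigma_1$-admissibility of $L(\lambda,T)$ guaranteed by Lemma~S(iii-a).
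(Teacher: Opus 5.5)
Your proposal is correct and matches the paper's proof of Lemma~BF step for step. Both use the poset of finite $L_{T_\lambda}$-type-preserving maps, seeded at $\bar c\mapsto\bar d$, or at $\emptyset\mapsto\emptyset$ via AL(i). The forth step realizes $q$ in $M'$ and moves the realizer onto $\bar b$ by an automorphism from AL(v), which applies because of AL(ii) and the pin on $\omega_1^{M'}$; the back step is symmetric, and the two are dovetailed. You are also right that Step~P is not needed for the lemma as stated; the paper records it there only because later uses (AL(iv), Definition~5.5, Lemma~5.7) phrase the realized-type hypothesis as $P\sqcup\trace(\cdot)$.
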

\noindent The sources used are [Sac07, p.~8, clause (1)], the closure property
at [Sac07, p.~9, eq.~(4.1)], and Lemma~AL(i), (ii), (v).
\begin{proof}
\emph{Step P (principal types are free).} Every principal $n$-type $p$ of $T_\lambda$---generated by an atom $b\in L_{T_\lambda}(\bar x)$ finitarily consistent with $T_\lambda$, with $T_\lambda\vdash_{\mathrm{fin}}\forall\bar x(b\to F)$ for every $F\in p$---is realized in every countable model of $T_\lambda$: $\exists\bar x\,b\in T_\lambda$, since by completeness (p.~8~(1)) the alternative $\neg\exists\bar x\,b\in T_\lambda$ refutes $b$'s finitary consistency with $T_\lambda$; a $b$-realizer realizes all of $p$ by generation. [This also discharges the bracket in AL(iv).]
\emph{Step 0 (the poset).} $\mathcal P:=$ finite partial maps $\bar a\mapsto\bar b$ with $\tp_{L_{T_\lambda}}(\bar a)=\tp_{L_{T_\lambda}}(\bar b)$. The seed $\bar c\mapsto\bar d\in\mathcal P$ by hypothesis; $\emptyset\mapsto\emptyset\in\mathcal P$ since $\Th_{L_{T_\lambda}}(M)=T_\lambda=\Th_{L_{T_\lambda}}(M')$ (AL(i), both sides).
\emph{Step F (forth).} Given $(\bar a\mapsto\bar b)\in\mathcal P$ and $a\in M$, set $q:=\tp_{L_{T_\lambda}}(\bar a a)$---realized in $M$, hence an $n$-type of $T_\lambda$. By the realized-set hypothesis $q$ is realized in $M'$, say by $\bar e=\bar e_0 e$. The restriction of a complete type to an initial subtuple is the complete type of that subtuple, so $\tp_{L_{T_\lambda}}(\bar e_0)=\tp_{L_{T_\lambda}}(\bar b)$. AL(v) in $M'$ [the pin $\omega_1^{M'}=\lambda$ is used exactly here] gives $\tau\in\Aut(M')$ with $\tau(\bar e_0)=\bar b$ coordinatewise; set $b:=\tau(e)$. Fragment formulas are automorphism-invariant, so $(\bar a a\mapsto\bar b b)\in\mathcal P$. \emph{This is the only realization-transfer step; it uses exactly the common realized-type set $P\sqcup(\text{common non-principal part})$ and nothing else (in particular no hidden $\trace=N$ use).}
\emph{Step B (back).} Symmetric, via AL(v) in $M$ [the pin $\omega_1^M=\lambda$].
\emph{Step D (dovetail).} Enumerate both models; from the seed, alternately apply F and B; the union $\sigma$ of the chain is a bijection preserving every $L_{T_\lambda}$-formula---in particular all atomic formulas and negations---hence an isomorphism, with $\sigma(\bar c)=\bar d$.
\end{proof}
\begin{rrem}[attribution]
The same-realized-types $\Rightarrow{\iso}$ half of this mechanism has a stated precedent in Sacks' proof of Theorem~8.1 ([Sac07, pp.~23--24, steps (8.20)--(8.22)]): realized-type-set equality at the top level of $A$'s own canonical tower is established between $A$ and its manufactured companion $A_\beta$ through the single tower's restriction lattice $p_\alpha\subseteq p_\delta\subseteq p_\beta\subseteq p_\gamma$, atomicity supplying the return direction ((8.22)), and (2.6)-homogeneity of both then yields $A_\beta\iso A$ in $V$---tower terms, anchor-vs-companion. Theorem~8.1 carries no saturation hypothesis and asserts no uniqueness, and its comparison never leaves the anchor's own tower; Theorem~5.6's novelty claim (uniqueness among \emph{all} node-saturated $\omega_1=\lambda$ models of the branch node, node terms, seed-parameterized) stands---the precedent is the mechanism, not the theorem.
\end{rrem}

\begin{rlem}[FL (a lower bound on $\srS$ and $\omega_1$)]
If $\lambda\in\Szero$ and $M$ is a countable model of $T_\lambda$, then $\srS(M)\ge\lambda$ and $\omega_1^M\ge\lambda$.
\end{rlem}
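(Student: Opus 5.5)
The plan is to route both inequalities through the identification of $M$'s own tree analysis with the branch (Lemma~AL(i)). Then I would compare $M$ with the exact-rank atomic models of Lemma~S(i) to get $\srS(M)\ge\lambda$, and read off $\omega_1^M\ge\lambda$ from the Nadel bound. Write $\delta:=\srS(M)$ and suppose, for a contradiction, that $\delta<\lambda$. By definition (2.5), $M$ is the atomic model of $T^M_\delta$.

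The first step transfers this atomicity to the branch node $T_\delta$. The induction in the proof of [Sac07, Prop.~4.5] gives $L^M_\delta\subseteq L_{T(\delta,M)}$, and AL(i) gives $T(\delta,M)=T_\delta$. So every tuple of $M$ has a $T^M_\delta$-type with a generator $g\in L^M_\delta\subseteq L_{T_\delta}$. I want this $g$ to generate the full $L_{T_\delta}$-type, and for that I would argue as in AL(vi). Because $M$ is the countable atomic model of $T^M_\delta$, it is homogeneous for $T^M_\delta$-types; this is the standard back-and-forth for atomic models in a countable fragment, and needs no pin on $\omega_1^M$. Any two realizers of $g$ are therefore automorphic, so they agree on every $L_{T_\delta}$-formula. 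Hence $M$ is an atomic model of $T_\delta$.

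The same argument with $T_\beta$ in place of $T_\delta$ shows that $M$ is an atomic model of $T_\beta$ for every $\beta\in[\delta,\lambda)$. The relevant facts are these: $M\models T_\lambda$ gives $M\models T_\beta$; AL(i) identifies $\Th_{L_{T_\beta}}(M)$ with $T_\beta$; and automorphism-invariance of $L_{T_\beta}$-formulas shows that $g$ still generates each realized $L_{T_\beta}$-type. Now I invoke uniqueness of countable atomic models of a complete $\omega$-complete theory in a countable fragment. It gives $M\iso$ the atomic model of $T_\beta$ supplied by Lemma~S(i), which has $\srS$ exactly $\beta$. Since $\srS$ is isomorphism-invariant ([Sac07, p.~5]), $\srS(M)=\beta$ for every $\beta\in[\delta,\lambda)$. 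This is absurd, because $\lambda$ is a limit (Lemma~S(iii-a) and \S2.6), so the interval contains at least two values. Hence $\srS(M)\ge\lambda$.

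For the second inequality, the Sacks-convention Nadel bound of \S2.7 gives $\lambda\le\srS(M)\le\omega_1^M+1$. As $\lambda$ is a limit, $\lambda\le\omega_1^M+1$ forces $\lambda\le\omega_1^M$.

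I expect the main obstacle to be the descent of principality from $T^M_\delta$ to $T_\beta$ for $\beta>\delta$. The branch fragments keep growing, since (4.1) adds conjunctions of all non-principal types, realized or not. So one cannot simply say that the tree stops growing; the homogeneity of atomic models must do the work. I would take care that this homogeneity is the plain atomic-model version, and not AL(v), which presupposes $\omega_1^M=\lambda$ and would be circular here.
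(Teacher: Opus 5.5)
Your argument is correct, but it takes a different route from the paper's.

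\emph{The paper's route.} The paper argues directly, one level at a time. For each $\delta<\lambda$ it takes from [Sac07, Thm.~4.9(vi)] a non-principal type $p$ of $T_\delta$ realized in the atomic model $A_{\delta+1}$ of $T_{\delta+1}$. Completeness of $T_{\delta+1}$ in $L'_{T_\delta}$ then puts $\sigma_p=\exists\bar x\bigwedge p$ into $T_{\delta+1}\subseteq T_\lambda$. So every countable $M\models T_\lambda$ realizes $p$, and is therefore not atomic over $T(\delta,M)=T_\delta$. This gives $\tr(M)\ge\lambda$, and [Sac07, Prop.~4.5] ($\tr\le\srS$) is then used as a black box. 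No homogeneity argument appears anywhere. The $\sigma_p$ device is the same one the paper reuses in (C0) and in Clause~2 of Theorem~5.12.

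\emph{Your route.} You argue by contradiction and in effect open up Prop.~4.5. You lift atomicity from the tower level $T^M_\delta$ to every branch node $T_\beta$ with $\beta\ge\delta$, using the homogeneity of countable atomic models. This is a pin-free analogue of AL(vi), and you are right to avoid AL(v) here. You then close with uniqueness of countable atomic models and the exact-rank clause of Lemma~S(i).

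\emph{What each buys.} Your version does not need Thm.~4.9(v)--(vi) or the tree rank $\tr$. It uses only Lemma~S(i), Lemma~AL(i)--(ii) and classical back-and-forth facts, and it makes explicit the principality descent that the paper leaves inside Prop.~4.5. The cost is an extra back-and-forth argument and an indirect proof.

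\emph{A simplification.} You need only the single value $\beta=\delta+1$, which lies below $\lambda$ because $\lambda$ is a limit. Then $M\iso$ the atomic model of $T_{\delta+1}$ already gives $\srS(M)=\delta+1\ne\delta$, so the rest of the interval $[\delta,\lambda)$ plays no role.

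The deduction of $\omega_1^M\ge\lambda$ from the Nadel bound is the same as in the paper.
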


\begin{proof}
Fix $\delta<\lambda$. [Sac07, Thm.~4.9(vi), p.~12]: $A_{\delta+1}$ realizes a non-principal $n$-type $p$ of $T_\delta$. Then $\bigwedge p\in L'_{T_\delta}$ (eq.~(4.1), p.~9) and $\sigma_p:=\exists\bar x\bigwedge p\in L'_{T_\delta}$ (closure); $A_{\delta+1}$ is an atomic model of $T_{\delta+1}$ (4.9(v)) realizing $p$, so $A_{\delta+1}\models\sigma_p$; $T_{\delta+1}$ is complete in $L'_{T_\delta}$ (p.~8~(1)) and true in $A_{\delta+1}$, so $\sigma_p\in T_{\delta+1}\subseteq T_\lambda$ (increasing branch). $M\models T_\lambda\Rightarrow M\models\sigma_p$; a witness tuple realizes $p$ (maximality). So $M$ realizes a non-principal type of $T_\delta=T(\delta,M)$ (AL(i)) and is not an atomic model of $T(\delta,M)$; $\delta$ is not a witness for $\tr(M)$ ((4.3), p.~10). All $\delta<\lambda$: $\tr(M)\ge\lambda$; Prop.~4.5 (p.~10): $\srS(M)\ge\tr(M)\ge\lambda$; the Nadel bound (\S2.7; [Sac07, p.~5]): $\srS(M)\le\omega_1^M+1$, and $\lambda$ limit forces $\omega_1^M\ge\lambda$.
\end{proof}

\begin{rlem}[JP (join-pin criterion)]
Let $\lambda\in\Szero$ and let $M$ be a fiber model of $T_\lambda$ (countable, $M\models T_\lambda$, $\omega_1^M=\lambda$); write $\omega_1^{T,M}$ for the relativized invariant of \S2.6 (equivalently,
by Lemma~ID, the least height of an admissible set containing a code of
$\langle T,M\rangle$ --- the locus of [Sac07, p.~23], read per the remark in
\S2.6). Then the hypotheses of [Sac07, Thm.~8.1] hold at $(T,M)\iff\omega_1^{T,M}=\lambda$.
\end{rlem}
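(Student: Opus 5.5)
The plan is to make explicit what the hypotheses of [Sac07, Thm.~8.1] are at $(T,M)$ and check them one by one against the fiber setting. As used at [Sac07, p.~23], Theorem~8.1 assumes three things: $L(\alpha,T)$ is $\Sigma_1$-admissible, $M$ is a countable model of $T$ with $\omega_1^{T,M}=\alpha$, and $T$ is weakly scattered below $\alpha$ in the localized form. The third is obtained through [Sac07, Cor.~3.2]. The proof therefore splits into two parts. First, every hypothesis other than the pin $\omega_1^{T,M}=\alpha$ holds automatically at $\alpha:=\lambda$ for $\lambda\in\Szero$. Second, the pin, read at $\alpha=\lambda$, is exactly the right-hand side. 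The forward direction ($\Rightarrow$) is then immediate: the hypotheses include the pin at $\lambda$, read through the invariant of \S2.6 as the reading remark there prescribes.

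For ($\Leftarrow$), assume $\omega_1^{T,M}=\lambda$ and verify the remaining hypotheses.
\begin{itemize}
\item \emph{Admissibility.} $\Sigma_1$-admissibility of $L(\lambda,T)$ is Lemma~S(iii-a), since $\lambda\in C_{5.3}\subseteq A(T)$.
\item \emph{Model.} $M$ is a countable model of $T$ because $M\models T_\lambda\supseteq T$ and $\Mod(T)=\Mod(\varphi)$ (\S3.7).
\item \emph{Weak scattering.} Weak scattering of $T$ below $\lambda$, in the localized form at [Sac07, p.~23], comes from (H1) via [Sac07, Prop.~4.3] and then Cor.~3.2 there. This is the inference flagged in the weak-scattering remark of \S3.7, and it should be flagged again here as statement-level.
\end{itemize}
I would also note a consistency check: $\omega_1^{T,M}\ge\omega_1^M=\lambda$ always holds, since the relativized minimum ranges over fewer oracles. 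So the pin is really the assertion that adjoining $T$ does not raise $\omega_1$. Lemma~ID lets me phrase it as: some admissible set of height $\lambda$ contains a code of $\langle T,M\rangle$.

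The main obstacle is matching the locus of [Sac07, p.~23] to the invariant of \S2.6. Sacks phrases the condition as ``$\omega_1^{T,M}=\alpha$'' with $L(\alpha,T)$ admissible and $M$ coded inside an admissible structure over $T$. I would first try to show via Lemma~ID that the two readings coincide. In one direction, an admissible $L(\lambda,T)[M]$-type structure of height $\lambda$ containing a code of $M$ also contains $T$ as an amenable predicate, and so gives an admissible set of height $\lambda$ containing a code of $\langle T,M\rangle$. In the other direction, such an admissible set, by (CC-core) applied with $s$ a code of $\langle T,M\rangle$, yields $L_\lambda[s]\models\mathrm{KP}$, and hence the structure Sacks requires. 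The minimality clauses then agree because both sides are least heights. Whatever residue of this identification cannot be derived from the text is absorbed into the declared reading of \S2.6, and it must be noted as consumed at statement level.
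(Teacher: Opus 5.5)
There is a genuine gap, and it lies in your list of the hypotheses of [Sac07, Thm.~8.1]. That theorem does not fix an ordinal $\alpha$ in advance and assume a pin $\omega_1^{T,M}=\alpha$. Your first two items reproduce the setting of the $\alpha$-saturation definition at [Sac07, p.~14], not of Theorem~8.1.

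In Theorem~8.1 the locus is \emph{defined} to be $\omega_1^{T,M}$. The hypotheses are then:
\begin{enumerate}[label=(\arabic*)]
\item $M\models T$ with $\srS(M)\ge\omega_1^{T,M}$, i.e.\ high Sacks rank relative to the locus;
\item $T$ is weakly scattered in $L(\omega_1^{T,M},\langle T,M\rangle)$, in the localized sense of p.~23.
\end{enumerate}
Because you drop hypothesis (1), your ($\Rightarrow$) direction is not ``immediate'': nothing in the hypotheses mentions $\lambda$, and a priori the locus could exceed $\omega_1^M$. The missing step is a squeeze. Hypothesis (1) together with the Nadel bound $\srS(M)\le\omega_1^M+1=\lambda+1$ (\S2.7) gives $\omega_1^{T,M}\le\lambda+1$. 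The monotonicity $\omega_1^{T,M}\ge\omega_1^M=\lambda$, which you record only as a consistency check, gives the lower bound. Finally, the height of an admissible set is a limit ordinal, which excludes $\lambda+1$. This is exactly Sacks' own first proof line, ``$\omega_1^A=\alpha$, since $\omega_1^A+1\ge\mathrm{sr}(A)$''.

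In ($\Leftarrow$) you never verify $\srS(M)\ge\lambda=\omega_1^{T,M}$, and this is the one place where the fiber setting does real work. It is Lemma~FL. Every countable model of $T_\lambda$ realizes a non-principal type of each $T_\delta=T(\delta,M)$ for $\delta<\lambda$, so $\tr(M)\ge\lambda$, and $\srS(M)\ge\tr(M)$ by [Sac07, Prop.~4.5].

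Your weak-scattering chain --- (H1), then [Sac07, Prop.~4.3], then clause~(a), then Cor.~3.2 --- matches the paper. It should, however, be stated in the locus $L(\lambda,\langle T,M\rangle)$. The admissibility of $L(\lambda,T)$ from Lemma~S(iii-a) is not one of the hypotheses to be checked.

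Finally, what you call the main obstacle is not one. The statement itself fixes $\omega_1^{T,M}$ as the invariant of \S2.6 (equivalently, via Lemma~ID), so no identification through (CC-core) is required.
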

\noindent The proof below stays entirely within $\srS$ and the invariants
$\omega_1^{(\cdot)}$; the rank $\cSR$ does not occur in it, so no rank transport
is involved.
\begin{proof}
\emph{(0) (Relativized-Nadel monotonicity.)} $\omega_1^{T,M}\ge\omega_1^M=\lambda$: [Mon13, p.~4] defines $\omega_1^{A,Y}=\min\{\omega_1^X:X\in\Sp(A),X\ge_T Y\}$---a min over a subset of the spectrum's $\omega_1$-values, hence $\ge\omega_1^A$; in the join form, every presentation of $\langle T,M\rangle$ computes a presentation of $M$.
\emph{($\Leftarrow$)} Assume $\omega_1^{T,M}=\lambda$. Hypothesis~1: $\srS(M)\ge\lambda=\omega_1^{T,M}$---Lemma~FL; $M\models T$ by the stated interface of [Sac07, p.~8] (\S3.7). Hypothesis~2: $T$ weakly scattered in $L(\lambda,\langle T,M\rangle)$: (H1) $\Rightarrow$ [Sac07]-scattered clauses (a)$\wedge$(b) by way of [Sac07, Prop.~4.3], as recorded in \S3.7 $\Rightarrow$ clause~(a) alone $\Rightarrow$ ([Sac07, Cor.~3.2, p.~7]) $S_{T'}\in$ every countable $\Sigma_1$-admissible $A$ with $T'\in A$---the localized definition of [Sac07, p.~23], as stated, in the full $T'$-generality that the proof of Theorem~8.1 there uses.
\emph{($\Rightarrow$)} Assume the hypotheses. $\srS(M)\ge\omega_1^{T,M}$ and the Nadel bound (\S2.7), $\srS(M)\le\omega_1^M+1=\lambda+1$, give $\omega_1^{T,M}\le\lambda+1$; with (0), $\lambda\le\omega_1^{T,M}\le\lambda+1$; the height of an admissible set is a limit ordinal $>\omega$, excluding $\lambda+1$: $\omega_1^{T,M}=\lambda$. [Sacks' own first proof line---p.~23: ``Thus $\omega_1^A=\alpha$, since $\omega_1^A+1\ge\mathrm{sr}(A)$''---is the identical computation run in the other direction.]
\end{proof}

\section{Main theorems}

Throughout \S5: (H0)--(H3); $\lambda$ a limit with $\lambda\in\Ffix\cap(\qr(\varphi),\omega_1)$ (admissibility of $\lambda$ is nowhere used by the trichotomy or the identities; it enters only through the supply lemmas~W and~S); ZFC only (Appendix~A). \emph{Where Theorem~\textup{(TV-b)} is used.} Theorems 5.1 and 5.2 involve no rank transport: their inputs are the Nadel bound of \S2.7 (on the $\AKR$--$\cSR$ side, which is TV-c) and Lemmas B, E, L, F and A, and $\srS$ does not occur in them. Theorem~5.3(i) and (ii) use Theorem~(TV-b), through Lemma~S(iv) and (v); Theorem~5.4(ii), (ii$'$) and (iv) inherit that use through Theorem~5.3(i).

\begin{rthm}[5.1 (per-witness trichotomy)]
Let $\lambda$ be as above and $A\in W_\lambda$. Then exactly one of:
\begin{itemize}[leftmargin=2.3em]
\item[\textup{(I)}] $\cSR(A)=\lambda$. Then $A\iso K_\lambda$; consequently $\omega_1^{K_\lambda}=\lambda$ and $\AKR(K_\lambda)=\lambda=\omega_1^{K_\lambda}$.
\item[\textup{(II)}] $\cSR(A)=\lambda+1$ and $A\notin\Ccl_{\lambda+1}$. Then $\mathrm{HIGH}(\lambda)\supsetneq\Ccl_{\lambda+1}$ and $\varphi$ has $\ge2$ non-isomorphic models of $\cSR=\lambda+1$ (namely $A$ and $K_{\lambda+1}$); consequently $\lambda+1\notin\Ffix$, since $\lambda+1\in\Ffix$ would give $\{M\models\varphi:\cSR(M)\ge\lambda+1\}=\Ccl_{\lambda+1}$ by Lemma~F(iv) at $\lambda+1$, against $A\notin\Ccl_{\lambda+1}$.
\item[\textup{(III)}] $\cSR(A)=\lambda+1$ and $A\in\Ccl_{\lambda+1}$. Then $A\iso K_{\lambda+1}$; consequently $\omega_1^{K_{\lambda+1}}=\lambda$ and $\cSR(K_{\lambda+1})=\lambda+1=\omega_1^{K_{\lambda+1}}+1$: $K_{\lambda+1}$ is Nadel-maximal ([Mon13, p.~4]'s ``high Scott rank'', top value).
\end{itemize}
\end{rthm}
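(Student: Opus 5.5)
The plan is to split by the value of $\cSR(A)$ and then, in the top case, by membership in $\Ccl_{\lambda+1}$. First I pin the rank. Since $A\in W_\lambda$ we have $\omega_1^A=\lambda$ and $\cSR(A)\ge\lambda$. The Nadel bound of \S2.7 gives $\cSR(A)\le\lambda+1$, so $\cSR(A)\in\{\lambda,\lambda+1\}$; this is also the pin recorded in Lemma~W. The two rank values are incompatible. When $\cSR(A)=\lambda+1$, membership and non-membership in $\Ccl_{\lambda+1}$ are complementary. Hence exactly one of (I)--(III) holds, and it remains to derive the stated consequences in each branch.

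For (I), suppose $\cSR(A)=\lambda$. Lemma~F(iv) at $\lambda\in\Ffix$, $\lambda>\qr(\varphi)$, gives $A\in\{\cSR\ge\lambda\}=\Ccl_\lambda$. Lemma~E says $K_\lambda$ is the unique member of $\Ccl_\lambda$ of $\cSR\le\lambda$ up to isomorphism, and Lemma~L supplies the uniqueness; so $A\iso K_\lambda$. Since $\omega_1^{(\cdot)}$ is an isomorphism invariant (the spectrum is invariant), $\omega_1^{K_\lambda}=\lambda$. Then $\AKR(K_\lambda)=\cSR(K_\lambda)=\lambda$, because the two ranks agree at limit values (TV-c(ii)). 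No rank transport through $\srS$ is needed.

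For (II), suppose $\cSR(A)=\lambda+1$ and $A\notin\Ccl_{\lambda+1}$. Then $A\in\mathrm{HIGH}(\lambda)\setminus\Ccl_{\lambda+1}$, and by Lemma~A(iv) $\Ccl_{\lambda+1}\subseteq\mathrm{HIGH}(\lambda)$, so the inclusion $\mathrm{HIGH}(\lambda)\supsetneq\Ccl_{\lambda+1}$ is strict. Lemma~E at $\lambda+1\ge\qr(\varphi)$ gives $K_{\lambda+1}\in\Ccl_{\lambda+1}$ with $\cSR(K_{\lambda+1})=\lambda+1$. Since $A\notin\Ccl_{\lambda+1}$ and $\Ccl_{\lambda+1}$ is closed under isomorphism, $A\not\iso K_{\lambda+1}$, so there are two non-isomorphic models of $\cSR=\lambda+1$. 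Finally, if $\lambda+1\in\Ffix$, then Lemma~F(iv) at $\lambda+1$ gives $\{\cSR\ge\lambda+1\}=\Ccl_{\lambda+1}$, which contradicts $A\notin\Ccl_{\lambda+1}$.

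For (III), suppose $\cSR(A)=\lambda+1$ and $A\in\Ccl_{\lambda+1}$. Lemma~L applied to the $\equiv_{\lambda+1}$-class $\Ccl_{\lambda+1}$ allows at most one model of $\cSR\le\lambda+1$ up to isomorphism, and $K_{\lambda+1}$ is such a model by Lemma~E; hence $A\iso K_{\lambda+1}$. Invariance gives $\omega_1^{K_{\lambda+1}}=\lambda$, and therefore $\cSR(K_{\lambda+1})=\lambda+1=\omega_1^{K_{\lambda+1}}+1$.

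I expect no genuine obstacle. The only delicate point is that Lemma~E needs $\lambda+1\ge\qr(\varphi)$ and $\lambda\ge\qr(\varphi)$, and both follow from $\lambda>\qr(\varphi)$. One must also check that Lemma~L's mixed-rank form, not only equal-rank uniqueness, is what places $A$ against the label. The other point to watch is the use of the Nadel bound in $\cSR$ form, which passes through TV-c and not through $\srS$.
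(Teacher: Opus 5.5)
Your proof is correct and follows the paper's argument essentially step for step: the Nadel bound together with the definition of $W_\lambda$ pins $\cSR(A)\in\{\lambda,\lambda+1\}$; Lemma~F(iv) with Lemmas~E and~L handles (I); and Lemma~E at $\lambda+1$ combined with the membership split handles (II) and (III). The differences are cosmetic. You use isomorphism-closure of $\Ccl_{\lambda+1}$ where the paper cites Lemma~A(vi), and Lemma~L where it cites [GRT, Cor.~3.2]; both comparisons are in fact at equal ranks, so the mixed-rank form is not needed. Drop your aside that the pin is ``also recorded in Lemma~W'': Lemma~W is stated only for $\lambda\in\Adm(t_\varphi)$, and the theorem deliberately avoids admissibility.
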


\begin{proof}
$\cSR(A)\in\{\lambda,\lambda+1\}$: the Nadel bound of \S2.7 $+$ the definition of $W_\lambda$ ($\cSR\le\omega_1^A+1=\lambda+1$; membership gives $\ge\lambda$)---no appeal to Lemma~W, hence no admissibility used. Membership in $\Ccl_{\lambda+1}$ splits the top case; exhaustive and exclusive. (I): $A\in\Ccl_\lambda$ (Lemma~F(iv)); Lemma~L $+$ Lemma~E give $A\iso K_\lambda$; $\omega_1$ transports by isomorphism-invariance and $\AKR$ by the TV-c limit clause. (II): $K_{\lambda+1}$ exists with $\cSR=\lambda+1$ exactly (Lemma~E at $\lambda+1>\qr(\varphi)$); $A\not\equiv_{\lambda+1}K_{\lambda+1}$ (Lemma~A(vi)), so $A\not\iso K_{\lambda+1}$; both have rank $\lambda+1$. (III): $A\equiv_{\lambda+1}K_{\lambda+1}$, equal ranks, [GRT, Cor.~3.2, p.~12]: $A\iso K_{\lambda+1}$; the pins follow.
\end{proof}
\noindent\emph{Remark.} The proofs of Theorems 5.1 and 5.2 use only Lemmas B, E, L, F and A(vi) together with the Nadel bound of \S2.7; neither Lemma~W nor Lemma~S enters them. The only cited statements used below in relativized rather than literal form are those noted in Lemma~W and in TV-a(ii)/TV-c(ii); the external attributions are those of (H1)/(H2).

\begin{rthm}[5.2 (coordinate identities)]
For a limit $\lambda\in\Ffix\cap(\qr(\varphi),\omega_1)$: $x(\lambda)\iff\omega_1^{K_\lambda}=\lambda$ and $y_g(\lambda)\iff\omega_1^{K_{\lambda+1}}=\lambda$; always $\omega_1^{K_\lambda}\ge\lambda$ and $\omega_1^{K_{\lambda+1}}\ge\lambda$ (Nadel bound, \S2.7). If $y_g(\lambda)$, the glued witnesses form the single isomorphism type $\{K_{\lambda+1}\}$ (by Theorem~5.1(III)).
\end{rthm}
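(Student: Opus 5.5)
The plan is to derive both coordinate identities directly from Theorem~5.1 together with the label lemmas. The lower bounds go first, since they are pure bookkeeping. By Lemma~E, $\cSR(K_\lambda)=\lambda$ and $\cSR(K_{\lambda+1})=\lambda+1$. The Nadel bound of \S2.7 gives $\cSR(M)\le\omega_1^M+1$ for every countable $M$. Hence $\omega_1^{K_\lambda}+1\ge\lambda$, and since $\omega_1^{K_\lambda}$ is a limit (an admissible height) while $\lambda$ is a limit, $\omega_1^{K_\lambda}\ge\lambda$. Similarly $\omega_1^{K_{\lambda+1}}+1\ge\lambda+1$ gives $\omega_1^{K_{\lambda+1}}\ge\lambda$ directly.

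For $x(\lambda)\iff\omega_1^{K_\lambda}=\lambda$ I would argue each direction separately.
\begin{itemize}
\item Forward: a witness $A\in W_\lambda$ with $\cSR(A)=\lambda$ falls under branch~(I) of Theorem~5.1. So $A\iso K_\lambda$, and $\omega_1$ transfers by isomorphism-invariance of the spectrum.
\item Backward: if $\omega_1^{K_\lambda}=\lambda$, then $K_\lambda$ itself is the witness. It models $\varphi$, has $\omega_1^{K_\lambda}=\lambda$, and has $\cSR(K_\lambda)=\lambda\ge\lambda$ by Lemma~E (which applies since $\lambda>\qr(\varphi)$). Hence $K_\lambda\in W_\lambda$, and it has rank exactly $\lambda$, which is $x(\lambda)$.
\end{itemize}

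The identity for $y_g(\lambda)$ runs the same way at level $\lambda+1$.
\begin{itemize}
\item Forward: a high witness $A\in W^{\mathrm{high}}_\lambda\cap\Ccl_{\lambda+1}$ is in branch~(III). So $A\iso K_{\lambda+1}$, and hence $\omega_1^{K_{\lambda+1}}=\lambda$.
\item Backward: suppose $\omega_1^{K_{\lambda+1}}=\lambda$. Lemma~E at $\lambda+1$ gives $\cSR(K_{\lambda+1})=\lambda+1$, so $K_{\lambda+1}\in W_\lambda$ and in fact $K_{\lambda+1}\in W^{\mathrm{high}}_\lambda$. By definition (\S3.2) $K_{\lambda+1}\in\Ccl_{\lambda+1}$, which is $y_g(\lambda)$.
\end{itemize}
The final sentence of the theorem is again branch~(III): every glued witness is isomorphic to $K_{\lambda+1}$, and $y_g(\lambda)$ guarantees that this class is nonempty.

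I expect no genuine obstacle. The only point needing care is the lower bound $\omega_1^{K_\lambda}\ge\lambda$, where the Nadel bound only yields $\omega_1^{K_\lambda}+1\ge\lambda$. I would close this gap by noting that $\omega_1^{M}$ is always a limit ordinal: it is $\omega_1^X$ for some real $X$, hence admissible (\S2.6, Lemma~ID). This excludes $\omega_1^{K_\lambda}+1=\lambda$ because $\lambda$ is a limit. Every step uses only Lemmas~B, E, L, F and the Nadel bound, consistent with the remark that no rank transport is involved.
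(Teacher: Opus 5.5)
Your proof is correct and is essentially the paper's: the ($\Rightarrow$) directions are Theorem~5.1(I)/(III), and the ($\Leftarrow$) directions place $K_\lambda$, resp.\ $K_{\lambda+1}$, in $W_\lambda$, resp.\ $W^{\mathrm{high}}_\lambda\cap\Ccl_{\lambda+1}$, using only Lemma~E's exact ranks and the assumed $\omega_1$-values. Your appeal to the admissibility of $\omega_1^{K_\lambda}$ in the lower bound is harmless but unneeded: $\lambda\le\omega_1^{K_\lambda}+1$ with $\lambda$ a limit already rules out $\omega_1^{K_\lambda}<\lambda$.
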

\noindent This is Theorem~5.1 combined with Lemma~E, together with the
existence lemmas where they are invoked.
\begin{proof}
The ($\Leftarrow$) directions place the labels themselves in $W_\lambda/W_\lambda^{\mathrm{high}}$ using only Lemma~E's exact ranks and the assumed $\omega_1$-values; the ($\Rightarrow$) directions are 5.1(I)/(III). No admissibility used.
\end{proof}

\begin{rthm}[5.3 (seeding: $W_\lambda^{\mathrm{high}}\ne\emptyset$ on a club)]
On the club $\Szero$: \textup{(i)} $B_\lambda\in W_\lambda^{\mathrm{high}}$, so $W_\lambda^{\mathrm{high}}\ne\emptyset$ [by Lemma~S(iv), using Theorem~(TV-b) at the successor value], hence $y_g(\lambda)\vee y_s(\lambda)$ for every $\lambda\in\Szero$; \textup{(ii)} $A_\lambda\iso K_\lambda$ [Lemma~S(v), using Theorem~(TV-b) at the limit value] and the labels $\{K_\lambda:\lambda\in\Szero\}$ carry the elementary-chain structure; \textup{(iii)} [Sac07, Thm.~5.3] decides \emph{nothing} about $x(\lambda)$ ($A_\lambda$ carries no $\omega_1$-pin); its operative set is its own $\Sigma_1$-hull club $C_{5.3}$---\emph{not} the $\Sigma_2$-admissibles ([Sac07, Thm.~6.1, p.~15]) and, per Lemma~S(iii-b), \emph{not} necessarily $\subseteq\Adm(t_\varphi)$ ($C_{5.3}\subseteq A(T)$ is the correct inclusion; Lemma~S(iii-a)); and [Sac07, Thm.~5.3] does not decide between $y_g$ and $y_s$ at any $\lambda$ (compare Question~5 of \S6, the corresponding question for $y_B$).
\end{rthm}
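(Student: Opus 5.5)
The plan is to assemble the three clauses from Lemma~S and Theorem~5.1; no new construction is needed. For \textup{(i)}, fix $\lambda\in\Szero$. By Lemma~S(iii-a), $\lambda\in A(T)$, so $\lambda$ is a limit. Since $\Szero\subseteq\Ffix\cap(\qr(\varphi),\omega_1)$, the standing hypotheses of \S5 hold at $\lambda$. Lemma~S(iv) then gives $B_\lambda\in W_\lambda^{\mathrm{high}}$: $\cSR(B_\lambda)=\lambda+1$ and $\omega_1^{B_\lambda}=\lambda$. This is the one place where Theorem~(TV-b) enters at the successor value, through the chain $\srS\to\SRsym\to\AKR\to\cSR$. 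Once $B_\lambda$ is a high witness, it either lies in $\Ccl_{\lambda+1}$ or it does not. In the first case it witnesses $y_g(\lambda)$, and in the second $y_s(\lambda)$, directly by the definitions in \S3.6; by Theorem~5.1(III), resp.\ (II), it is then $K_{\lambda+1}$, resp.\ a second model of rank $\lambda+1$.

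For \textup{(ii)}, I would quote Lemma~S(v) and recall its mechanism. Theorem~(TV-b) at the limit value gives $\cSR(A_\lambda)=\lambda$. Lemma~F(iv) at $\lambda\in\Ffix$ places $A_\lambda$ in $\Ccl_\lambda$, and Lemma~L together with Lemma~E identifies it with $K_\lambda$. The elementary-chain clause of Lemma~S(ii) is then transported along these isomorphisms. The one point to check is that the chain embeddings are elementary only for the branch languages $L_{T_\beta}$, so the transported statement must be phrased relative to those languages, as it is in Lemma~S(v).

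Clause \textup{(iii)} is a set of negative observations, which I would verify one at a time.
\begin{enumerate}[label=\textup{(\alph*)},leftmargin=2.2em]
\item $x(\lambda)$ requires a member of $W_\lambda$ of rank $\lambda$. The only rank-$\lambda$ candidate is $K_\lambda\iso A_\lambda$, and Theorem~5.2 reduces $x(\lambda)$ to $\omega_1^{K_\lambda}=\lambda$. [Sac07, Thm.~5.3] asserts no $\omega_1$-value for $A_\lambda$: the pin at p.~14 is attached only to the saturated model. Lemma~FL gives only $\omega_1^{A_\lambda}\ge\lambda$, so nothing is decided.
\item The operative set is $C_{5.3}$, and Lemma~S(iii-a) and (iii-b) give exactly the stated inclusions and non-inclusions. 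For the contrast with the $\Sigma_2$-admissibles I would cite [Sac07, Thm.~6.1].
\item The theorem does not locate $B_\lambda$ relative to $\Ccl_{\lambda+1}$, since $\equiv_{\lambda+1}$-classes are never mentioned there. So it cannot distinguish $y_g$ from $y_s$.
\end{enumerate}

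The main obstacle is the rank transport in (i) and (ii), which is already discharged by Lemma~TV. The substantive remaining care is in (iii): a claim that a source ``decides nothing'' is a meta-statement, and I would present it as the absence of the relevant conclusion in the cited statement rather than as an independence result.
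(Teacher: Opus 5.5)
Your proposal is correct and follows the paper's route: the paper's proof is simply that everything is contained in Lemma~S(iv) and (v), and your case split on whether $B_\lambda\in\Ccl_{\lambda+1}$ is exactly the observation $y_B(\lambda)\Rightarrow y_g(\lambda)$, $\neg y_B(\lambda)\Rightarrow y_s(\lambda)$ recorded in \S3.6. Your treatment of (iii) as a list of absent conclusions in the cited source, supported by Lemma~FL, Theorem~5.2 and Lemma~S(iii-a)/(iii-b), matches the paper's reading of that clause (compare (C4) and Question~4 of \S6).
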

\noindent Everything is contained in Lemma~S(iv) and (v).

\begin{rthm}[5.4 (stationarity and the pigeonhole)]
\textup{(i)} $S$ is stationary ($\Ffix$ club; tail club; $\Adm(t_\varphi)$ stationary---for every fixed code, by the $\mathrm{ZF}^-$-collapse argument: a countable $N\prec H_{\omega_2}$ with $t_\varphi\in N$ gives $\lambda=N\cap\omega_1$ with $L_\lambda[t_\varphi]\models\mathrm{KP}$). \textup{(ii)} \emph{Stationary dichotomy:} $\Szero$ is a club and $\Szero=\{\lambda:y_s(\lambda)\}\cup\{\lambda:y_g(\lambda)\}$ [via 5.3(i)], hence at least one of \textup{(a)} $\{\lambda:\text{some witness splits from }\Ccl_{\lambda+1}\}$, \textup{(b)} $\{\lambda:\omega_1^{K_{\lambda+1}}=\lambda\}$ is stationary; (a)-stationary $\Rightarrow$ stationarily many successor levels with $\ge2$ models of $\cSR=\lambda+1$; (b)-stationary $\Rightarrow$ stationarily many Nadel-maximal labels. \textup{(ii$'$) (B-form, strictly finer, exclusive per $\lambda$):} $\Szero=\{\lambda:y_B(\lambda)\}\sqcup\{\lambda:\neg y_B(\lambda)\}$; at least one side stationary; the $y_B$-side gives $B_\lambda\iso K_{\lambda+1}$, a $\lambda$-saturated Nadel-maximal label; the $\neg y_B$-side gives a 5.1(II)-witness, hence the two-models conclusion [both sides use $B_\lambda\in W_\lambda^{\mathrm{high}}={}$5.3(i)]. \textup{(iii)} The pattern map $\lambda\mapsto(x,y_g,y_s)$ takes $\le6$ values on $\Szero$ (of the $8$ boolean patterns, the two with $\neg y_g\wedge\neg y_s$ are excluded by 5.3(i)); one pattern class is stationary. \textup{(iv)} On $\Szero$ there is a high witness at every $\lambda$, so $W_\lambda^{\mathrm{high}}\ne\emptyset$ throughout (Thm.~5.3(i)). \textup{(v) (both-supplies overlap):} $S\cap C_{5.3}=\Ffix\cap\Adm(t_\varphi)\cap C_{5.3}\cap\text{tail}$ is stationary (stationary $\cap$ club $\cap$ club): stationarily many $\lambda$ carry both the Gandy witness and $B_\lambda$ inside $W_\lambda$. Whether they coincide is open (formulated only; \S6).
\end{rthm}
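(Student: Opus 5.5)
The plan is to prove the five clauses in order, using only the club and stationarity bookkeeping on $\omega_1$ together with Theorems~5.1 and~5.3(i) and Lemmas~F, S and A.

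\emph{Clause (i).} Lemma~F(iii) makes $\Ffix$ a club, and the tail $(\qr(\varphi),\omega_1)$ is a club. It remains to show that $\Adm(t_\varphi)$ is stationary, since a stationary set intersected with clubs stays stationary. Fix a club $D$. Take a countable $N\prec H_{\omega_2}$ containing $t_\varphi$ and $D$, and put $\lambda:=N\cap\omega_1$; then $\lambda\in D$ by elementarity and closure. The transitive collapse $\bar N$ of $N$ satisfies $\mathrm{ZF}^-$, has $\omega_1^{\bar N}=\lambda$, and contains $t_\varphi$, which the collapse fixes as a real. Inside $\bar N$, the set $L_{\omega_1}[t_\varphi]$ is admissible, since $\omega_1$ is regular there. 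By absoluteness of the $L[t_\varphi]$-hierarchy for transitive models, this set is $L_\lambda[t_\varphi]$, so $\lambda\in\Adm(t_\varphi)\cap D$. This is the step needing the most care; the point to check is that admissibility of $L_\lambda[t_\varphi]$ is upward absolute from $\bar N$ to $V$, which holds because KP is a first-order property of a set.

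\emph{Clauses (ii) and (ii$'$).} $\Szero$ is a club by Lemma~S(iii-c). By Theorem~5.3(i), every $\lambda\in\Szero$ satisfies $y_g(\lambda)\vee y_s(\lambda)$. A club is not the union of two nonstationary sets, because the union of two nonstationary sets is disjoint from the intersection of two clubs, which is itself a club. Hence one of the sides is stationary.
\begin{itemize}
\item On the $y_s$ side, Theorem~5.1(II) gives two non-isomorphic models of $\cSR=\lambda+1$.
\item On the $y_g$ side, Theorem~5.1(III) gives $A\iso K_{\lambda+1}$, so $\omega_1^{K_{\lambda+1}}=\lambda$; equivalently this is Theorem~5.2.
\end{itemize}
For (ii$'$), partition $\Szero$ according to $y_B(\lambda)$ and apply the same pigeonhole. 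On the $y_B$ side, $B_\lambda\in W^{\mathrm{high}}_\lambda$ (by 5.3(i)) glues, so Theorem~5.1(III) gives $B_\lambda\iso K_{\lambda+1}$, a $\lambda$-saturated Nadel-maximal label. On the $\neg y_B$ side, $B_\lambda$ is a 5.1(II)-witness.

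\emph{Clause (iii).} The pattern map has at most $8$ values. By 5.3(i) the two patterns with $\neg y_g\wedge\neg y_s$ never occur on $\Szero$, leaving at most $6$. A club partitioned into finitely many pieces has a stationary piece, because a finite union of nonstationary sets is nonstationary.

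\emph{Clauses (iv) and (v).} Clause (iv) restates Theorem~5.3(i). For (v), $S\cap C_{5.3}$ is the intersection of the stationary set $\Adm(t_\varphi)$ from (i) with the clubs $\Ffix$, $C_{5.3}$ and the tail, so it is stationary. At each such $\lambda$, Lemma~W supplies a Gandy witness and Lemma~S(iv) supplies $B_\lambda$, both in $W_\lambda$.
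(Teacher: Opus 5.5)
Your proposal is correct and follows the paper's own route. For (i) you use the $\mathrm{ZF}^-$-collapse of a countable $N\prec H_{\omega_2}$ to get stationarity of $\Adm(t_\varphi)$; for (ii)--(v) you use finite pigeonhole on the club $\Szero$ over Theorem~5.3(i), with Theorem~5.1(II)/(III) (equivalently Theorem~5.2) and Lemmas~W and S(iv) supplying the content of each side, which is exactly the ``routine set theory over Theorems 5.1--5.3'' the paper invokes.
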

\noindent All parts are routine set theory over Theorems 5.1--5.3.

\begin{rdef}[5.5 (fiber, $N$, $P$, trace)]
For $\lambda\in\Szero$: $N:=$ the set of non-principal $n$-types (all $n\ge1$) of $T_\lambda$---nonempty and countable ([Sac07, p.~15]); $P:=$ the set of principal $n$-types of $T_\lambda$; the \emph{fiber} $:=\{M\text{ countable}:M\models T_\lambda,\ \omega_1^M=\lambda\}$ (by Lemma~FL the pin is equivalently one-sided, $\omega_1^M\le\lambda$). For a fiber $M$: $\trace(M):=\{p\in N:M\text{ realizes }p\}$; by Step~P of Lemma~BF the set of $n$-types of $T_\lambda$ realized in $M$ is exactly $P\sqcup\trace(M)$. Call a countable model $M$ \emph{node-saturated} (at $\lambda$) if $M\models T_\lambda$ and $M$ realizes every $n$-type of $T_\lambda$.
\end{rdef}

\begin{rthm}[5.6 (uniqueness of the node-saturated model)]
$\lambda\in\Szero$. Any two models of $T_\lambda$ with $\omega_1=\lambda$ realizing every $n$-type of $T_\lambda$ are isomorphic. $B_\lambda$ realizes every $n$-type of $T_\lambda$ (all of $N$: [Sac07, p.~15]; all of $P$: Step~P) and $\omega_1^{B_\lambda}=\lambda$ (statement-level at p.~14; explicit at p.~15, ``$\omega_1^B=c_\delta$''); hence $B_\lambda$ is, up to isomorphism, \emph{the} node-saturated $\omega_1=\lambda$ model of $T_\lambda$.
\end{rthm}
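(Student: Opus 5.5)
The uniqueness clause is a direct instance of Lemma~BF with empty seed. Let $M,M'$ be countable models of $T_\lambda$ with $\omega_1^M=\omega_1^{M'}=\lambda$, each realizing every $n$-type of $T_\lambda$ for every $n\ge1$. Then the set of $n$-types of $T_\lambda$ realized in $M$ and the set realized in $M'$ are both the full set of $n$-types of $T_\lambda$. So the realized-type hypothesis of Lemma~BF holds trivially. Take $\bar c,\bar d$ empty. The seed condition $\tp_{L_{T_\lambda}}(\emptyset)=\tp_{L_{T_\lambda}}(\emptyset)$ is Step~0 of Lemma~BF: both sides equal $T_\lambda$ by Lemma~AL(i). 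Lemma~BF then gives $\sigma:M\iso M'$.

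Nothing beyond Lemma~BF is needed for uniqueness. The two $\omega_1$-pins are used exactly where Lemma~BF uses them, namely to invoke the node-homogeneity of Lemma~AL(v) in each model.

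For the second clause I verify that $B_\lambda$ satisfies each hypothesis.
\begin{itemize}[leftmargin=1.6em]
\item $B_\lambda$ is a countable model of $T_\lambda$. This is stated at [Sac07, p.~15] and recorded in \S3.7(c) and in the proof of Lemma~AL(vii-2).
\item $\omega_1^{B_\lambda}=\lambda$. This is the pin of Lemma~S(ii), read through the invariant of \S2.6.
\item $B_\lambda$ realizes every $n$-type of $T_\lambda$. Each such type is either non-principal or principal. By \S3.7(c), $B_\lambda$ is built to realize all of $N$, the non-principal types of the node $T_\lambda=T_{c_\delta}$. Every principal type is realized in every countable model of $T_\lambda$ by Step~P of Lemma~BF.
\end{itemize}
Hence $B_\lambda$ is node-saturated with $\omega_1=\lambda$, and the first clause makes it unique up to isomorphism.

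The main obstacle is bookkeeping: the node $T_{c_\delta}$ of [Sac07, p.~15] must be identified with the branch node $T_\lambda$ of Lemma~AL, and $N$ must be read as the full set of non-principal $n$-types for all $n\ge1$, under the reading of \S3.7. I would confirm the identification via Lemma~AL(i), branch uniqueness applied to $B_\lambda$: any node that $B_\lambda$ satisfies at level $\lambda$ equals $T_\lambda$. Given that, no further work is required.
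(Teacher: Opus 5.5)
Your proposal is correct and follows the paper's proof: both realized-type sets are the full type set of $T_\lambda$, so Lemma~BF with the empty seed gives the isomorphism, and $B_\lambda$ qualifies by [Sac07, p.~15] for $N$, Step~P for $P$, and the pin recorded in Lemma~S(ii). The one superfluous item is your proposed check of $T_{c_\delta}=T_\lambda$ via Lemma~AL(i). That identification is definitional in \S3.7, where $T_{c_\delta}$ is the restriction of the branch $T_{\omega_1}$ to level $c_\delta=\lambda$, and AL(i) in any case already presupposes $B_\lambda\models T_\lambda$.
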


\begin{proof}
Both realized-type sets equal the full type set of $T_\lambda$; apply Lemma~BF with the empty seed.
\end{proof}
\noindent\emph{Remark.} The $\omega_1=\lambda$ pins are used only through AL(v) (i.e.\ [Sac07, (2.6)]); uniqueness without the pins is not claimed.

\begin{rlem}[5.7 (U$^+$): the trace classifies the fiber]
$\lambda\in\Szero$; fiber models $M,M'$ with $\trace(M)=\trace(M')\Rightarrow M\iso M'$. Hence $\trace$ is a complete isomorphism invariant on the fiber, and $\Spec_\lambda$ (Definition~5.11; countable by (C1)) enumerates the fiber's isomorphism classes.
\end{rlem}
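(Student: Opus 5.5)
The plan is to reduce Lemma~5.7 directly to Lemma~BF, applied with the empty seed, exactly as Theorem~5.6 was proved.

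Let $M,M'$ be fiber models at $\lambda\in\Szero$. By definition both are countable models of $T_\lambda$ with $\omega_1^M=\omega_1^{M'}=\lambda$, so the hypotheses of Lemma~BF on the two models are met except possibly the clause ``realize the same $n$-types of $T_\lambda$''. To supply that clause I would invoke Definition~5.5: by Step~P of Lemma~BF the set of $n$-types of $T_\lambda$ realized in a fiber model $M$ is exactly $P\sqcup\trace(M)$.

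\emph{Step P.} Every principal type of $T_\lambda$ is realized in every countable model of $T_\lambda$. Moreover every realized type is either principal or lies in $N$, so its non-principal part is by definition $\trace(M)$.

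Hence $\trace(M)=\trace(M')$ gives equality of the full realized-type sets, namely $P\sqcup\trace(M)=P\sqcup\trace(M')$.

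Next, take the empty seed $\bar c=\bar d=\emptyset$. Its admissibility in the poset $\mathcal P$ is Step~0 of Lemma~BF, which uses $\Th_{L_{T_\lambda}}(M)=T_\lambda=\Th_{L_{T_\lambda}}(M')$ from AL(i). Lemma~BF then yields $\sigma:M\iso M'$. The pins $\omega_1=\lambda$ on both sides are consumed only inside Steps~F and~B through AL(v), which is why the lemma is stated on the fiber rather than on all models of $T_\lambda$.

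For the ``hence'' clauses, completeness of the invariant is the converse direction. Isomorphic fiber models have equal traces, because realization of an $L_{T_\lambda}$-type is transported by any isomorphism: fragment formulas are isomorphism-invariant. So $\trace$ induces an injection from fiber isomorphism classes into $\Spec_\lambda$. Taking $\Spec_\lambda$ to be the set of traces realized by fiber models (Definition~5.11), this injection is a bijection, and countability is imported from (C1) as the statement indicates.

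I expect no substantive obstacle; the only point needing care is bookkeeping. I must make sure that ``$n$-type of $T_\lambda$'' in Definition~5.5 and in Lemma~BF is the same notion, namely the reading fixed in the remark of \S3.7. I must also check that Step~P really covers all $n\ge1$ uniformly, so that the equality $P\sqcup\trace(M)$ is an equality of the type sets quantified over in Lemma~BF's hypothesis.
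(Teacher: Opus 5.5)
Your proposal is correct and follows the paper's proof: by Definition~5.5 and Step~P the realized-type sets are $P\sqcup\trace(\cdot)$, these are equal under the hypothesis, and Lemma~BF with the empty seed gives the isomorphism. Your additional observation that isomorphisms preserve traces is the converse that the ``complete invariant'' and enumeration clauses need; the paper leaves this trivial direction implicit.
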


\begin{proof}
The realized sets are $P\sqcup\trace(\cdot)$ (Definition~5.5), equal by hypothesis; apply BF with the empty seed.
\end{proof}
\noindent\emph{The case of empty trace.} A fiber model of empty trace realizes
only principal types, hence is the atomic model $A_\lambda$ ([Sac07, Thm.~4.9(v)]).
So the class of fiber models of empty trace, if non-empty, is exactly
$\{A_\lambda\}$, and it is non-empty if and only if $\omega_1^{A_\lambda}=\lambda$,
if and only if $x(\lambda)$. Whether this holds is Question~4 of \S6: Lemma~FL
gives $\omega_1^{A_\lambda}\ge\lambda$ at once, but the reverse inequality is not
available.

\begin{rlem}[5.8 (U$^+$p): the pointed form of Lemma 5.7]
$\lambda\in\Szero$; fiber models $M,M'$ with $\trace(M)=\trace(M')$ and $\tp_{L_{T_\lambda}}(\bar c)=\tp_{L_{T_\lambda}}(\bar d)\Rightarrow(M,\bar c)\iso(M',\bar d)$.
\end{rlem}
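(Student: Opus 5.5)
The plan is to reduce Lemma~5.8 directly to Lemma~BF, exactly as Lemma~5.7 was reduced to it, but now with a nonempty seed. Fix $\lambda\in\Szero$ and fiber models $M,M'$, so both are countable models of $T_\lambda$ with $\omega_1^M=\omega_1^{M'}=\lambda$. Let $\bar c\in M^{<\omega}$ and $\bar d\in M'^{<\omega}$ satisfy $\tp_{L_{T_\lambda}}(\bar c)=\tp_{L_{T_\lambda}}(\bar d)$.

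\emph{Step 1: equal realized-type sets.} By Definition~5.5, which rests on Step~P of Lemma~BF, the set of $n$-types of $T_\lambda$ realized in a fiber model $X$ is $P\sqcup\trace(X)$. The hypothesis $\trace(M)=\trace(M')$ therefore makes the two realized sets identical for every $n\ge1$. This is the ``same $n$-types'' hypothesis of Lemma~BF.

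\emph{Step 2: the seed.} The seed condition of Lemma~BF asks precisely that $\bar c$ and $\bar d$ have the same complete $L_{T_\lambda}$-type. That is the second hypothesis here, taken verbatim. The tuples must have the same length; this is implicit in equality of types as types in the same variables $\bar x$.

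\emph{Step 3: apply Lemma~BF.} Lemma~BF then yields an isomorphism $\sigma:M\iso M'$ with $\sigma(\bar c)=\bar d$ coordinatewise, which is exactly $(M,\bar c)\iso(M',\bar d)$.

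The only point needing care is the Step~F/Step~B extension inside Lemma~BF. There, AL(v) is invoked in $M'$ (respectively $M$) to move a realizer $\bar e_0e$ so that $\bar e_0$ lands onto the current partial image. That invocation requires the pins $\omega_1^{M}=\omega_1^{M'}=\lambda$, and the definition of the fiber supplies them. Since no new ingredient is needed, I expect no real obstacle. The one thing to check is that Lemma~BF's poset $\mathcal P$ may start from an arbitrary type-matched seed rather than the empty one, and Lemma~BF is already stated in that generality. The lemma is thus the pointed form of Lemma~5.7 for free.
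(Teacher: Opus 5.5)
Your proposal is correct and matches the paper's argument: Definition~5.5 (via Step~P) makes the realized type set of each fiber model equal to $P\sqcup\trace(\cdot)$, so equal traces supply the common-realized-types hypothesis of Lemma~BF. The paper's proof is then just Lemma~BF applied with the seed $\bar c\mapsto\bar d$.
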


\begin{proof}
BF with the seed $\bar c\mapsto\bar d$.
\end{proof}
\noindent\emph{Consequence.} The same-cell instance of $\BRctree$ (\S5.15) holds in the strongest form (pointed ${\iso}$); any refutation witness must be cross-cell. Note what the proof of Lemma~BF uses: the seed uses only equality of fragment types, and the single step that transfers realizations uses only $P$ together with the common non-principal part, never $N$ itself; the case of the empty tuple reduces to Lemma~5.7.

\begin{rlem}[5.9 (FIB): the fiber is a single $\equiv_\lambda$-class]
$\lambda\in\Szero$: every fiber model $M$ lies in $\Ccl_\lambda$; equivalently $M\equiv_\lambda B_\lambda$; the fiber is a single $\equiv_\lambda$-class.
\end{rlem}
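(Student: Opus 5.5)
The plan is to run, for an arbitrary fiber model $M$, the same chain of inferences that Lemma~AL(vii-2) runs for $B_\lambda$: pass from Sacks' fragment rank to the categoricity rank, and then place $M$ in $\Ccl_\lambda$ by the fixed-point property. Fix $\lambda\in\Szero$ and a fiber model $M$, so $M$ is countable, $M\models T_\lambda$ and $\omega_1^M=\lambda$.

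\emph{Step (1).} Lemma~FL applied to $M$ gives $\srS(M)\ge\lambda$. Since $\lambda=\omega_1^M$, this reads $\srS(M)\ge\omega_1^M$.

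\emph{Step (2).} Every model of a branch node is a model of $\varphi$ (\S3.7), so $M\models\varphi$.

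\emph{Step (3): the rank transport.} The lower-bound form of Lemma~TV turns $\srS(M)\ge\omega_1^M$ into $\cSR(M)\ge\omega_1^M=\lambda$. This goes through TV-a at the two values, Corollary~(TV-b$^-$) and TV-c(iii). It is the step listed in \S2.5 as a rank transport.

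\emph{Step (4).} Since $\lambda\in\Ffix$ and $\lambda\ge\qr(\varphi)$, Lemma~F(iv) gives $\{N\models\varphi:\cSR(N)\ge\lambda\}=\Ccl_\lambda$. Hence $M\in\Ccl_\lambda$. (Alternatively, note that $M\in W_\lambda$ and use Lemma~A(iv).)

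For the equivalence with $M\equiv_\lambda B_\lambda$, I will use the fact from the proof of AL(vii-2) that $B_\lambda\in\Ccl_\lambda$. Since $\Ccl_\lambda$ is a single $\equiv_\lambda$-class, $M\in\Ccl_\lambda$ gives $M\equiv_\lambda B_\lambda$. For the converse, if $M\equiv_\lambda B_\lambda$ then $M$ lies in the $\equiv_\lambda$-class of $B_\lambda$, which is $\Ccl_\lambda$. So every fiber model lies in $\Ccl_\lambda$, and the whole fiber sits inside one $\equiv_\lambda$-class. The statement is to be read in that sense: the fiber lies within a single class, not that it exhausts $\Ccl_\lambda$. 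By AL(vii-2), all of $\Ccl_\lambda$ models $T_\lambda$, but members of $\Ccl_\lambda$ with $\omega_1>\lambda$ are not fiber models. I would note this in a remark rather than claim equality.

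The main obstacle is Step (3), and it is the only nontrivial step. Its delicacy lies in applying the floor form correctly. The pin $\omega_1^M=\lambda$ has to be in place before Lemma~TV is invoked, because TV-a(ii) and (TV-b) are stated only at the values $\omega_1^M$ and $\omega_1^M+1$. The inequality $\srS(M)\ge\omega_1^M$ must therefore be read as placing $\srS(M)$ at one of those two values. By the Nadel bound of \S2.7 for $\srS$, $\srS(M)\le\omega_1^M+1$, so $\srS(M)\in\{\lambda,\lambda+1\}$ exactly, and the value-by-value biconditionals apply. I will write this out explicitly, since the Corollary as stated is in floor form and that form is exactly what is consumed here.
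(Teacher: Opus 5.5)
Your proposal is correct and matches the paper's proof essentially step for step: Lemma~FL, the Nadel bound for $\srS$ to place $\srS(M)\in\{\lambda,\lambda+1\}$, and the floor form of Lemma~TV (TV-a, Corollary~(TV-b$^-$), TV-c(iii)) to get $\cSR(M)\ge\lambda$, followed by $M\models\varphi$ and Lemma~F(iv); $M\equiv_\lambda B_\lambda$ then comes from running the same chain at $B_\lambda$, which is what AL(vii-2) does. Your reading of ``single $\equiv_\lambda$-class'' as containment in $\Ccl_\lambda$, not equality with it, is also exactly what the paper's proof establishes.
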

\noindent The rank transport in this proof is step~(3) below, routed through
Lemma~TV of \S2.5, which records the complete list of such steps.
\begin{proof}
(1) Lemma~FL: $\srS(M)\ge\lambda$. (2) the Nadel bound (\S2.7; [Sac07, p.~5]): $\srS(M)\le\omega_1^M+1=\lambda+1$; so $\srS(M)\in\{\omega_1^M,\omega_1^M+1\}$. (3) \emph{[This is the rank transport; we use the lower-bound form of \S2.5.} TV-a at the value pair (the limit value via the unrelativized multiples-of-$\omega^2$ clause, $\lambda=\omega_1^M$ being admissible hence multiplicatively closed; the successor value via the relativized clause; this step is routine) gives $\SRsym(M)\ge\lambda$; Corollary~(TV-b$^-$)---Lemma~TV's floor corollary---gives $\AKR(M)\ge\lambda$; TV-c(iii) (stated off-by-$1$ $+$ $\lambda$ limit) gives $\cSR(M)\ge\lambda$.\emph{]} (4) $M\models\varphi$ ([Sac07, p.~8], stated interface; \S3.7). (5) $\lambda\in\Ffix\cap(\qr(\varphi),\omega_1)$; Lemma~F(iv): $M\in\Ccl_\lambda$. (6) The same chain at $M:=B_\lambda$ ($\srS(B_\lambda)=\lambda+1$; [Sac07, p.~14]) puts $B_\lambda\in\Ccl_\lambda$; hence $M\equiv_\lambda B_\lambda$.
\end{proof}
\noindent\emph{Remark (avoiding a circularity).} The naive route [$T_\lambda$-theory equality $\Rightarrow\equiv_\lambda$] is the $\emptyset$-tuple instance of $\BRctree$ (\S5.15); the rank route breaks the circle; no unproved transfer between fragment types and $\Pin{\lambda}$-types is used anywhere.

\begin{rcor}[5.10 (fiber ${}=W_\lambda$)]
$\lambda\in\Szero$: $\{M\text{ countable}:M\models T_\lambda,\ \omega_1^M=\lambda\}=\{A\models\varphi:\omega_1^A=\lambda,\ \cSR(A)\ge\lambda\}$; under the counting convention of \S3.4, the fiber is exactly $W_\lambda$.
\end{rcor}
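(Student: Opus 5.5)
We prove the two inclusions separately; the inclusion from the fiber into $W_\lambda$ is already contained in Lemma~5.9. For ``$\subseteq$'': let $M$ be a countable model of $T_\lambda$ with $\omega_1^M=\lambda$. Then $M\models\varphi$ by the interface of [Sac07, p.~8] recorded in \S3.7. Steps (1)--(3) of the proof of Lemma~5.9 give $\cSR(M)\ge\lambda$; this is the rank transport through the lower-bound form of Lemma~TV. Hence $M\in W_\lambda$. Nothing here is new beyond quoting Lemma~5.9.

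For ``$\supseteq$'': let $A\models\varphi$ with $\omega_1^A=\lambda$ and $\cSR(A)\ge\lambda$. It suffices to show $A\models T_\lambda$, since the pin $\omega_1^A=\lambda$ is part of the hypothesis. By Lemma~F(iv) at $\lambda\in\Ffix\cap(\qr(\varphi),\omega_1)$, the condition $\cSR(A)\ge\lambda$ places $A\in\Ccl_\lambda$. Lemma~AL(vii-2) then gives $A\models T_\lambda$ directly. Unwinding that step: $A\equiv_\lambda B_\lambda$, both lying in the single class $\Ccl_\lambda$; every sentence of $T_\lambda$ has quantifier rank $<\lambda$ by AL(vii-1); so each such sentence transfers from $B_\lambda$ to $A$ across $\equiv_\lambda$, using Karp and the limit clause of \S2.1. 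No rank transport is needed in this direction beyond the one already embedded in AL(vii-2), namely $B_\lambda\in\Ccl_\lambda$. So $A$ is a fiber model.

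The step I expect to need the most care is bookkeeping rather than mathematics. First, the ``countable'' in the fiber definition must match the counting convention of \S3.4, so that both sides are read up to isomorphism; this is harmless because both conditions are isomorphism-invariant ($\omega_1^{(\cdot)}$, $\cSR$, and satisfaction of $T_\lambda$). Second, AL(vii-2) must apply to arbitrary members of $\Ccl_\lambda$, not only to $K_{\lambda+1}$. It does: its proof treats a general $M\in\Ccl_\lambda$. With those two points checked, the corollary follows, and in particular Lemma~5.7's trace classification applies to all of $W_\lambda$ on $\Szero$.
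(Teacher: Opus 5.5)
Your proposal is correct and is essentially the paper's proof. The inclusion $\subseteq$ is steps~(1)--(4) of Lemma~5.9, including the rank transport of step~(3) through the lower-bound form of Lemma~TV; the inclusion $\supseteq$ is Lemma~F(iv) placing $A$ in $\Ccl_\lambda$, followed by Lemma~AL(vii-2), with the $\omega_1$-pin taken from the hypothesis in each direction.
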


\begin{proof}
$\subseteq$: Lemma~5.9 steps (1)--(4) give $A\models\varphi$ and $\cSR(A)\ge\lambda$; the pin is the fiber's. $\supseteq$: $\cSR(A)\ge\lambda\Rightarrow A\in\Ccl_\lambda$ (F(iv)) $\Rightarrow A\models T_\lambda$ (AL(vii-2)); the pin is $W_\lambda$'s.
\end{proof}
\noindent\emph{Consequences.} $\Spec_\lambda$ coordinatizes $W_\lambda$ up to isomorphism; on the fiber, $\trace(M)=\emptyset\iff M\iso A_\lambda$ (the edge case of Lemma~5.7), and the further identifications $A_\lambda\iso K_\lambda$ / $\cSR=\lambda$ use Theorem~(TV-b) at the limit value, through Lemma~S(v)/S(iv); $\trace(M)\ne\emptyset\iff M\in W_\lambda^{\mathrm{high}}$ uses Theorem~(TV-b) at the successor value, through Lemma~S(iv).

\subsection*{5.11. The trace spectrum $\Spec_\lambda$ and its closure constraints}

\begin{rdef}[5.11 ($\Spec_\lambda$)]
For $\lambda\in\Szero$: $\Spec_\lambda:=\{\trace(M):M\text{ a fiber model}\}\subseteq\mathcal P(N)$ (fiber, $N$, $P$, $\trace$ per Definition~5.5; by Lemma~FL the $\omega_1$-pin in ``fiber'' is equivalently one-sided, $\omega_1^M\le\lambda$).
\end{rdef}
\noindent\emph{Remark (the complexity of the invariant).} Fix $t\in V$ coding the triple $(T_\lambda$, which is $\Delta_1^{L(\lambda,T)}$ via the branch parameter $p$; see [Sac07, p.~15], an enumeration $(p_i)_{i<\omega}$ of $N$, a wellorder of type $\lambda)$. For fixed $S\subseteq N$, membership $S\in\Spec_\lambda$ is $\Sigma^1_2(t\oplus S)$: the witness clause is [$\exists$ countable $M\models T_\lambda$ with $\trace(M)=S$ ($\Delta^1_1$ in codes) and $\omega_1^M\le\lambda$ ($\Sigma^1_1$ in a code: $\forall e[\neg\mathrm{WF}(\Phi_e^x)\vee\ot<\lambda]$)]---the $\ge$-half of the two-sided pin is free by Lemma~FL (a ZFC theorem, uniform in every extension), which collapses the naive $\Sigma^1_2\wedge\Pi^1_2$ shape. Shoenfield: absolute for all set forcing, per instance. $\Spec_\lambda$ as a set gains members only at $S\notin V$; all downstream use is membership-level or existential (``$\exists$ proper nonempty member'' is $\Sigma^1_2$, absolute); frame-level statements naming $K_{\lambda+1}/\Ccl_{\lambda+1}/B_\lambda/A_\lambda$ inherit the $\omega_1$-preserving scope; stationarity is $V$-internal. (See Appendix~A.2; Koerwien's example is excluded here because the countability argument in (C1) uses scatteredness.)

\smallskip
\noindent\textbf{(C0) (node factorization).} Every countable $M\models T_\lambda$ satisfies exactly one node on level $\lambda+1$ of $\mathrm{TR}(T)$, namely $S_M:=\Th_{L'_{T_\lambda}}(M)$; and $\trace(M)=\{p\in N:\sigma_p\in S_M\}$---the $\sigma$-part of $S_M$.
\begin{proof}
The successor fragment $L'_{T_\lambda}$---the least fragment $\supseteq L_{T_\lambda}$ containing $\bigwedge p$ for every non-principal $n$-type $p$ of $T_\lambda$ (eq.~(4.1), [Sac07, p.~9])---exists and is countable: $N$ is non-empty and countable, both stated at [Sac07, p.~15]. \emph{Existence.} $S_M$ is finitarily consistent ($M$ models it); complete in $L'_{T_\lambda}$ ([Sac07, p.~8, clause~(1)], by truth); has the disjunction property (clause~(2) there, semantically); and extends the level-$\lambda$ node: $T_\lambda=\Th_{L_{T_\lambda}}(M)$ [AL(i) at $\delta=\lambda$] with $L_{T_\lambda}\subseteq L'_{T_\lambda}$. So $S_M$ is a finitarily consistent $\omega$-complete extension of $T_\lambda$ in $L'_{T_\lambda}$---a node on level $\lambda+1$. \emph{Uniqueness.} A level-$(\lambda+1)$ node $T'$ with $M\models T'$ extends some level-$\lambda$ node $S$ in $L'_S$; $M\models S$ forces $S=T_\lambda$ [AL(i)], so $L'_S=L'_{T_\lambda}$; completeness of $T'$ in $L'_{T_\lambda}$ plus truth in $M$ gives $T'=S_M$. \emph{The $\sigma$-part.} For $p\in N$: $\sigma_p:=\exists\bar x\bigwedge p\in L'_{T_\lambda}$, and $M\models\sigma_p\iff M$ realizes $p$ (the witness's complete $L_{T_\lambda}$-type is finitarily consistent with $T_\lambda$ and contains the maximal $p$, hence equals it). So $\{p\in N:\sigma_p\in S_M\}=\trace(M)$.
\end{proof}
\noindent The sources used are [Sac07, p.~8, clauses (1) and (2)], [Sac07, p.~9,
eq.~(4.1)] and the successor clause there, the statement about $N$ at
[Sac07, p.~15], and Lemma~AL(i).

\smallskip
\noindent\textbf{(C1) (countability of the spectrum).} $|\Spec_\lambda|\le\aleph_0$.
\begin{proof}
$\varphi$'s scatteredness---(H1), [Mon13, Def.~1.4]---yields [Sac07]-scatteredness, clauses (a)$\wedge$(b) by way of [Sac07, Prop.~4.3], as recorded in \S3.7. In the development terms of [Sac07, p.~13], clauses (a)/(b) refute cases (1)--(4) at every countable $\beta$ (each asserts an uncountable type-set, extension-set, or node-set, giving continuum many models), and case~(5) is refuted by $\varphi$'s $\aleph_1$-many models ((H0)/(H1)). So $\mathrm{vr}(\varphi)=\omega_1$ (in the notation of the development at [Sac07, p.~13]) and the tree develops fully. In particular, case~(3)'s failure at $\beta=\lambda+1$ with $S=T_\lambda$---whose type-sets are countable---forces the negation of its second conjunct: the set of finitarily consistent $\omega$-complete extensions of $T_\lambda$ in $L'_{T_\lambda}$, i.e.\ the successor nodes of $T_\lambda$, is countable. By (C0), every $\trace(M)$ is the $\sigma$-part of a successor node of $T_\lambda$; hence $\Spec_\lambda\subseteq\{\sigma\text{-part}(S):S\text{ a successor node of }T_\lambda\}$, a countable set.
\end{proof}
\noindent The proof uses the failure of case (3) at [Sac07, p.~13] together with (C0);
the inference from (H1) to weak scattering is the one recorded in \S3.7. The
five-case list of p.~13 is not reproduced here; what is consumed from it is
exactly this: were the set of finitarily consistent $\omega$-complete
extensions of $T_\lambda$ in $L'_{T_\lambda}$ uncountable, case~(3) there would
place $T$ outside the weakly scattered case, against \S3.7 --- so that set is
countable, and $\Spec_\lambda$ injects into it through (C0).

\smallskip
\noindent\textbf{(C2) (restriction/extension closure).} For every fiber model $M$: if $p\in\trace(M)$ and $q:=p\restr\bar x'$ (the members of $p$ among the $L_{T_\lambda}(\bar x')$-formulas, $\bar x'$ a subtuple) is non-principal, then $q\in\trace(M)$. Dually, $N\setminus\trace(M)$ is extension-closed.
\begin{proof}
Let $\bar a$ realize $p$; the subtuple $\bar a'$ satisfies every member of $q$. $q$ is an $n$-type of $T_\lambda$: maximal (each $F\in L_{T_\lambda}(\bar x')$ is also an $\bar x$-formula, so $F\in p$ or $\neg F\in p$, and membership descends) and finitarily consistent with $T_\lambda$, witnessed in $M\models T_\lambda$. So $q$ is realized (by $\bar a'$) and non-principal by hypothesis: $q\in N\cap\trace(M)$. The dual clause is the contrapositive: a realizer of $p$ restricts to a realizer of $q$.
\end{proof}

\smallskip
\noindent\textbf{(C3) (top occupied).} $N\in\Spec_\lambda$ at every $\lambda\in\Szero$, with witness $B_\lambda$.
\begin{proof}
All inputs are stated at [Sac07, pp.~14--15]: $B_\lambda\models T_\lambda$; $\omega_1^{B_\lambda}=\lambda$; $B_\lambda$ realizes all the types in $N$. So $B_\lambda$ is a fiber model with $\trace(B_\lambda)\supseteq N$, hence $=N$.
\end{proof}
\noindent All inputs are at [Sac07, pp.~14--15]. (Lemma~AL(iv) identifies
saturation with respect to the node theory with the notion of $\lambda$-saturation
at [Sac07, p.~14]; that identification is not needed here, since the statement
concerns the node theory only.)

\smallskip
\noindent\textbf{(C4) (the empty trace).} \textup{(C4-a)} $\emptyset\in\Spec_\lambda
\iff\omega_1^{A_\lambda}=\lambda$. \textup{(C4-b)} $\omega_1^{A_\lambda}=\lambda\iff
x(\lambda)$; this follows from Lemma~S(v), which gives $A_\lambda\iso K_\lambda$ on
$\Szero$ (and uses Theorem~(TV-b) at the limit value), together with the identity
$x(\lambda)\iff\omega_1^{K_\lambda}=\lambda$ of Thm.~5.2. Whether
$\emptyset\in\Spec_\lambda$ is open; it is Question~4 of \S6. Lemma~FL gives
$\omega_1^{A_\lambda}\ge\lambda$ at once, but the reverse inequality is not
available.
\begin{proof}[Proof of (C4-a)]
($\Leftarrow$) If $\omega_1^{A_\lambda}=\lambda$, then $A_\lambda$ is a fiber model ($A_\lambda\models T_\lambda$: it is the atomic model of $T_\lambda$, [Sac07, Thm.~4.9(v)]); atomicity means every realized complete $L_{T_\lambda}$-type is principal, so $\trace(A_\lambda)=\emptyset$ and $\emptyset\in\Spec_\lambda$. ($\Rightarrow$) If $\trace(M)=\emptyset$ for a fiber $M$, then $M$ realizes only principal types, i.e.\ $M$ is an atomic model of $T_\lambda$; by 4.9's uniqueness $M\iso A_\lambda$, and isomorphism-invariance of $\omega_1$ gives $\omega_1^{A_\lambda}=\omega_1^M=\lambda$.
\end{proof}
\subsection*{5.12. A reduction of $\dq$}

Let $\lambda\in\Szero$. Define $\mathrm{NodeSat}(K_{\lambda+1}):\iff K_{\lambda+1}$ realizes every $n$-type of $T_\lambda$. [Framing note: equivalently, $K_{\lambda+1}$ realizes every member of $N$---principal types are automatic once $K_{\lambda+1}\models T_\lambda$, which is AL(vii-2), via Step~P of Lemma~BF.]

\begin{rthm}[5.12 (reduction of $\dq$)]
Let $\lambda\in\Szero$ and write $K:=K_{\lambda+1}$. Then:
\begin{itemize}[leftmargin=2.6em]
\item[\textup{(1)}] $y_B(\lambda)\iff y_g(\lambda)\wedge\mathrm{NodeSat}(K)$;
\item[\textup{(2)}] $K\le_{\lambda+1}B_\lambda$ implies that $K$ realizes every $p\in N$, hence \textup{(}by the framing note\textup{)} $\mathrm{NodeSat}(K)$; combined with $y_g(\lambda)$ this gives $y_B(\lambda)$ via Clause~1;
\item[\textup{(3)}] $B_\lambda\le_{\lambda+1}K\iff\REALB$, where $\REALB:\iff$ every $\Pin{\lambda}$-type realized in $K$ is realized in $B_\lambda$;
\item[\textup{(4)}] $\dq\iff\NSK(\lambda):=[y_g(\lambda)\Rightarrow\mathrm{NodeSat}(K)]$, pointwise on $\Szero$.
\end{itemize}
\end{rthm}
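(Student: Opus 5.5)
Clauses (1) and (4) are formal consequences of results already in place; (2) and (3) are Karp-style transfers across $\le_{\lambda+1}$. The plan is to prove (1) first and read (4) off it, then treat (2) and (3) separately.

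\emph{Clause (1).} For ($\Rightarrow$), assume $y_B(\lambda)$, i.e.\ $B_\lambda\in\Ccl_{\lambda+1}$. Since $B_\lambda\in W_\lambda^{\mathrm{high}}$ by Lemma~S(iv), $B_\lambda$ is a glued witness, which gives $y_g(\lambda)$. Theorem~5.1(III) then gives $B_\lambda\iso K$. By Theorem~5.6 $B_\lambda$ realizes every $n$-type of $T_\lambda$, and this property is isomorphism-invariant, so $\mathrm{NodeSat}(K)$ holds. For ($\Leftarrow$), assume $y_g(\lambda)$ and $\mathrm{NodeSat}(K)$. Theorem~5.2 gives $\omega_1^{K}=\lambda$, and AL(vii-2) gives $K\models T_\lambda$. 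So $K$ is a node-saturated fiber model, and the uniqueness in Theorem~5.6 yields $K\iso B_\lambda$. Hence $B_\lambda\in\Ccl_{\lambda+1}$, which is $y_B(\lambda)$. Clause (4) follows at once: $\dq$ is the implication $y_g\Rightarrow y_B$, and by (1) this is equivalent to $y_g\Rightarrow(y_g\wedge\mathrm{NodeSat}(K))$, which is exactly $\NSK(\lambda)$.

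\emph{Clause (2).} Fix $p\in N$ and put $\sigma_p:=\exists\bar x\bigwedge p$, as in (C0). By AL(vii-1) every member of $p$ has quantifier rank $<\lambda$, so $\bigwedge p$ is a $\Pin{\lambda}$ formula and $\sigma_p$ is $\Sin{\lambda+1}$. By (C3), $B_\lambda$ realizes $p$, so $B_\lambda\models\sigma_p$. Under the Karp orientation of \S2.1, $K\le_{\lambda+1}B_\lambda$ moves $\Sin{\lambda+1}$ truths of $B_\lambda$ into $K$, so $K\models\sigma_p$. A witness tuple has an $L_{T_\lambda}$-type that contains $p$ and is consistent with $T_\lambda$ (since $K\models T_\lambda$ by AL(vii-2)); by maximality of $p$ that type equals $p$. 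Thus $K$ realizes all of $N$, and the framing note upgrades this to $\mathrm{NodeSat}(K)$; the rest of (2) is (1).

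\emph{Clause (3).} This is the step I expect to be the main obstacle, because it needs the exact unfolding of [GRT, Def.~1.2] at the successor level $\lambda+1$ together with a symmetry argument at the limit $\lambda$. Unfolding the definition, $B_\lambda\le_{\lambda+1}K$ says: for every tuple $\bar b$ in $K$ there is a tuple $\bar a$ in $B_\lambda$ with $(K,\bar b)\le_\lambda(B_\lambda,\bar a)$. Since $\lambda$ is a limit, $\le_\lambda=\bigcap_{\gamma<\lambda}\le_\gamma$. Moreover $\le_{\gamma+1}\subseteq\ge_\gamma$ (the flip used in (T2)), so $\le_\lambda$ is symmetric and coincides with $\equiv_\lambda$ on tuples. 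By Karp, $\equiv_\lambda$ on tuples means equality of $\Pin{\lambda}$-types. Hence $B_\lambda\le_{\lambda+1}K$ says precisely that every $\Pin{\lambda}$-type realized in $K$ is realized in $B_\lambda$, which is $\REALB$, and the argument runs in both directions. The care needed is to use the tuple form of the definition and not the sentence form, and to get the direction of the flip right.
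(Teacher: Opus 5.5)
Your proposal is correct and follows the paper's proof clause by clause: Clauses~1, 2 and~4 use the same ingredients as the paper, namely Theorem~5.1(III) (the paper uses Lemma~A(vi) with Lemma~L) for $B_\lambda\iso K$, Theorem~5.2, AL(vii-2), Theorem~5.6, and the $\Sin{\lambda+1}$ sentence $\sigma_p$ moved by the Karp orientation. The only divergence is the converse half of Clause~3, where you run the tuple-level unfolding of $B_\lambda\le_{\lambda+1}K$ backwards with Karp at level $\lambda$, whereas the paper uses sentence-level Karp at $\lambda+1$ together with the $\Sigma_{\lambda+1}$ normal form. Your route is valid and slightly more economical. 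Your symmetry step ($\le_\lambda=\equiv_\lambda$ at the limit) is correct but unnecessary, since Karp's one-sided form already says that $(K,\bar d)\le_\lambda(B_\lambda,\bar c)$ holds exactly when $\bar c$ realizes the $\Pin{\lambda}$-type of $\bar d$.
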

\noindent Corollary~5.15(e) below characterizes $\REALB$ under $y_g$ as $\Phi$-agreement at the unsupported $q\in\trace(K_{\lambda+1})$. The proofs of the four clauses follow.

\smallskip
\noindent\textbf{Clause 1.} $y_B(\lambda)\iff y_g(\lambda)\wedge\mathrm{NodeSat}(K_{\lambda+1})$.
\begin{proof}
($\Rightarrow$) Assume $y_B(\lambda)$, i.e.\ $B_\lambda\in\Ccl_{\lambda+1}$. Then $B_\lambda\iso K_{\lambda+1}$ [the \S3.6 chain: $B_\lambda\in\Ccl_{\lambda+1}\iff B_\lambda\equiv_{\lambda+1}K_{\lambda+1}\iff B_\lambda\iso K_{\lambda+1}$; Lemma~A(vi) $+$ Lemma~L at equal ranks $\lambda+1$, via [GRT, Prop.~3.1 $+$ Cor.~3.2, p.~12]]. Hence $\omega_1^{K_{\lambda+1}}=\omega_1^{B_\lambda}=\lambda$ (isomorphism-invariance; the pin stated at [Sac07, p.~15]), which is $y_g(\lambda)$ [Thm.~5.2]. And $K_{\lambda+1}$ inherits $B_\lambda$'s realized-type set across the isomorphism (fragment formulas are ${\iso}$-invariant): $B_\lambda$ realizes every $n$-type of $T_\lambda$ (all of $N$ stated at p.~15; all of $P$ by Step~P), so $\mathrm{NodeSat}(K_{\lambda+1})$. [$K_{\lambda+1}\models T_\lambda$ is not separately used in this direction; it comes with the isomorphism.]
($\Leftarrow$) Assume $y_g(\lambda)\wedge\mathrm{NodeSat}(K_{\lambda+1})$; write $K:=K_{\lambda+1}$. (i) $\omega_1^K=\lambda$ [$y_g$, Thm.~5.2]. (ii) $K\models T_\lambda$ [AL(vii-2)]. (iii) $K$ and $B_\lambda$ are then two countable models of $T_\lambda$ with $\omega_1=\lambda$ realizing the same $n$-types of $T_\lambda$---namely all of them (NodeSat for $K$; stated p.~15 $+$ Step~P for $B_\lambda$). (iv) Lemma~BF with the empty seed (equivalently Theorem~5.6): $B_\lambda\iso K$. (v) $K_{\lambda+1}\in\Ccl_{\lambda+1}$ (the label, definitional) and ${\iso}$ preserves $\Ccl_{\lambda+1}$-membership: $B_\lambda\in\Ccl_{\lambda+1}$---$y_B(\lambda)$.
\end{proof}

\smallskip
\noindent\textbf{Clause 2.} $K_{\lambda+1}\le_{\lambda+1}B_\lambda\Rightarrow K_{\lambda+1}$ realizes every $p\in N$---hence $\mathrm{NodeSat}(K_{\lambda+1})$ [modulo the framing note]---and, combined with $y_g(\lambda)$, gives $y_B(\lambda)$ via Clause~1($\Leftarrow$).
\begin{proof}
(i) Karp orientation [GRT, Thm.~1.1(3), p.~6]: $K\le_{\lambda+1}B_\lambda\iff$ every $\Sin{\lambda+1}$ sentence true in $B_\lambda$ is true in $K$ (clause~(3) at the empty tuples). (ii) For $p\in N$: $\bigwedge p$ is a countable conjunction of $L_{T_\lambda}$-formulas, each of $\qr<\lambda$ [AL(vii-1)], so $\bigwedge p\in\Pin{\lambda}$ and $\sigma_p:=\exists\bar x\bigwedge p\in\Sin{\lambda+1}$. (iii) $B_\lambda\models\sigma_p$ ([Sac07, p.~15]). (iv) By (i), $K\models\sigma_p$: a witness tuple satisfies every member of $p$---$K$ realizes $p$. (v) So $K$ realizes every $p\in N$; the NodeSat form adds the principal types via the framing note [AL(vii-2)]; with $y_g$, Clause~1($\Leftarrow$) gives $y_B$.
\end{proof}
\noindent\emph{Remark.} The stronger form ``$K_{\lambda+1}\le_{\lambda+1}B_\lambda
\Rightarrow y_B(\lambda)$'', without the hypothesis $y_g(\lambda)$, is not
available: $\mathrm{NodeSat}(K_{\lambda+1})$ by itself is not known to force
$\omega_1^{K_{\lambda+1}}=\lambda$, and whether it does is open (the question is
recorded within Question~2 of \S6). This is why the
conclusion $y_B$ in Clause~2 is obtained through $y_g$.\\
\noindent Steps (i)--(iv) are self-contained; the passage to $\mathrm{NodeSat}$
and thence to $y_B$ uses Lemma~AL(vii-2), through Clause~1($\Leftarrow$).

\smallskip
\noindent\textbf{Clause 3.} $B_\lambda\le_{\lambda+1}K_{\lambda+1}\iff\REALB$ (every $\Pin{\lambda}$-type realized in $K_{\lambda+1}$ is realized in $B_\lambda$).
\begin{proof}
Write $K:=K_{\lambda+1}$. ($\Leftarrow$) Assume $\REALB$. By Karp's theorem ([GRT, Thm.~1.1(3), p.~6], at the empty tuples) it suffices to transfer every true $\Sin{\lambda+1}$ sentence from $K$ to $B_\lambda$. Such a $\sigma$ is a countable disjunction of formulas $\exists\bar x\,\psi$ with $\psi\in\Pin{\lambda}$ [the $\Sigma_{\lambda+1}$ normal form; hierarchy per [MonP2, Ch.~2], by reference]. $K\models\sigma$ yields a true disjunct $\exists\bar x\,\psi$ with witness $\bar a\models\psi$
[no disjunction-splitting principle is invoked here: a
single true disjunct is extracted semantically, and only that disjunct is used
below]; $\{\psi\}$ is a $\Pin{\lambda}$-type realized in $K$; $\REALB$ supplies $\bar b\models\psi$ in $B_\lambda$; so $B_\lambda\models\exists\bar x\,\psi$ and $B_\lambda\models\sigma$. ($\Rightarrow$) Assume $B_\lambda\le_{\lambda+1}K$. Let $\Gamma$ be a $\Pin{\lambda}$-type realized in $K$, at $\bar a$ say. Unfold [GRT, Def.~1.2, p.~6] at $(B_\lambda,\emptyset)\le_{\lambda+1}(K,\emptyset)$, taking $\beta:=\lambda<\lambda+1$ and $\bar d:=\bar a\in K$: there is $\bar b\in B_\lambda$ with $(K,\bar a)\le_\lambda(B_\lambda,\bar b)$. By Karp's theorem ([GRT, Thm.~1.1(2)] at $\alpha=\lambda$), every $\Pin{\lambda}$ formula true of $\bar a$ in $K$ is true of $\bar b$ in $B_\lambda$; in particular $\bar b$ realizes $\Gamma$.
\end{proof}
\noindent The sources used are [GRT, p.~6, Def.~1.2 and Thm.~1.1(2), (3)], and
the normal form for $\Sin{\lambda+1}$ sentences, for which we refer to
[MonP2, Ch.~2].

\smallskip
\noindent\textbf{Clause 4.} $\dq\iff\NSK(\lambda):=[y_g(\lambda)\Rightarrow\mathrm{NodeSat}(K_{\lambda+1})]$, pointwise on $\Szero$.
\begin{proof}
$\dq(\lambda):=[y_g(\lambda)\Rightarrow y_B(\lambda)]$. ($\Leftarrow$) Given $\NSK$ and $y_g$: NodeSat holds, and Clause~1($\Leftarrow$) gives $y_B$. ($\Rightarrow$) Given $\dq$ and $y_g$: $y_B$ holds, and Clause~1($\Rightarrow$) gives NodeSat.
\end{proof}
\noindent The statement $\dq$, equivalently $\NSK$, is itself open; see \S6. We
note that it is a condition on a single $\lambda$ together with one assertion
about the types realized in one structure: it entails no counting of models by
rank and says nothing about $\SRp$, so neither limb of \S1.3 is in play.

\begin{rlem}[5.13 (NS$^-$): the label realizes a non-principal type]
For every $\lambda\in\Szero$ with $y_g(\lambda)$: $\trace(K_{\lambda+1})\ne\emptyset$.
\end{rlem}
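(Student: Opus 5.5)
The plan is to argue by contradiction, using the empty-trace analysis of Lemma~5.7 and (C4-a). Fix $\lambda\in\Szero$ with $y_g(\lambda)$ and write $K:=K_{\lambda+1}$.

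First we place $K$ in the fiber. By Theorem~5.2, $y_g(\lambda)$ gives $\omega_1^K=\lambda$, and by Lemma~AL(vii-2) we have $K\models T_\lambda$. So $K$ is a fiber model in the sense of Definition~5.5, and $\trace(K)$ is defined.

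Now suppose $\trace(K)=\emptyset$. By Step~P of Lemma~BF, the $n$-types of $T_\lambda$ realized in $K$ are then exactly $P$. Hence $K$ is an atomic model of $T_\lambda$, and by the uniqueness clause of [Sac07, Thm.~4.9(v)] (exactly as in the ($\Rightarrow$) half of (C4-a)) we get $K\iso A_\lambda$. By Lemma~S(v), $A_\lambda\iso K_\lambda$, so $K_{\lambda+1}\iso K_\lambda$. This is impossible, since $\cSR$ is an isomorphism invariant while Lemma~E gives $\cSR(K_\lambda)=\lambda$ and $\cSR(K_{\lambda+1})=\lambda+1$, both levels lying at or above $\qr(\varphi)$. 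Equivalently, Lemma~A(iv) puts $K\in\Ccl_{\lambda+1}\subseteq\mathrm{HIGH}(\lambda)$, which is disjoint from the isomorphism type of $K_\lambda$. Therefore $\trace(K)\ne\emptyset$.

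The step I expect to need care is the passage from empty trace to $K\iso A_\lambda$. It uses the uniqueness of atomic models of the node theory $T_\lambda$, and Lemma~S(v) in turn uses Theorem~(TV-b) at the limit value. If one wants to avoid that rank transport, a fallback is to argue that an atomic model of $T_\lambda$ has $\srS=\lambda$, via [Sac07, Thm.~5.3] and uniqueness: then $\srS(K)=\lambda$, whereas $\omega_1^K=\lambda$ and $\cSR(K)=\lambda+1$ would force $\srS(K)=\lambda+1$ by Lemma~TV at the successor value, again a contradiction. Either route closes the argument, and the first is the one I would write.
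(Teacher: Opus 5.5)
Your proof is correct, but it follows a different route from the paper's.

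\textbf{The paper's route.} Route~(a) turns empty trace into orbit-definability. If every realized $L_{T_\lambda}$-type of $K$ is principal, then the node-homogeneity of Lemma~AL(v) applies; this needs the pin $\omega_1^K=\lambda$, i.e.\ $y_g(\lambda)$. It shows that each generating atom $b$, of $\qr<\lambda$ by AL(vii-1), defines exactly one $\Aut(K)$-orbit. So (U1) of [Mon15, Thm.~1.1] holds at $\lambda$, giving $\cSR(K)\le\lambda$ against Lemma~E. Route~(b) reaches the same contradiction through an unsupported $\Pin{\lambda}$-type via (U5).

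\textbf{Your route.} You bypass homogeneity and (U1) entirely. Instead you use the uniqueness of the countable atomic model of $T_\lambda$ (the step in (C4-a)) and the identification $A_\lambda\iso K_\lambda$ of Lemma~S(v).

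\textbf{What this buys.} In your argument $y_g$ enters only to put $K$ on the fiber, so that $\trace(K)$ is defined. AL(vii-2) gives $K_{\lambda+1}\models T_\lambda$ unconditionally, and Lemma~S(iv) gives $\cSR(A_\lambda)=\lambda$. So the same argument shows that $K_{\lambda+1}$ realizes some member of $N$ at every $\lambda\in\Szero$. Hence the Consequence recorded after the lemma holds without $y_g$. Your route is also not the ``one-liner in Sacks' terms'' that the Sharpness remark rules out. You use atomicity over the node theory $T_\lambda$, not over $T^K_\lambda$, so no lifting of the kind asked for in Question~3 is needed.

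\textbf{What it costs.} Lemma~S(iv)--(v) consumes [Sac07, Thm.~5.3]'s value $\srS(A_\lambda)=\lambda$ at statement level and passes through Theorem~(TV-b) at the limit value. The paper's proof touches the rank comparison only through the lower-bound form inside AL(vii-2), which both proofs use. Your argument is also intrinsically about the empty tuple, whereas the paper's route~(b) is the one that relativizes to the cofinality statement of Lemma~5.14.

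\textbf{A correction on the fallback.} Contrary to what you say, your fallback does not avoid a rank transport. Lemma~TV at the successor value is Theorem~(TV-b) at the successor value, so the fallback only trades one value of (TV-b) for the other.
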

\emph{Preliminaries (common to both routes).} $K:=K_{\lambda+1}$. $y_g$ gives $\omega_1^K=\lambda$ [Thm.~5.2]. $K\models T_\lambda$ [AL(vii-2)]. Hence $K$ is a fiber model, and for $\bar a\in K^n$ the complete type $p_{\bar a}:=\tp_{L_{T_\lambda}}(\bar a)$ is an $n$-type of $T_\lambda$ [AL(i)], realized in $K$; a realized non-principal $p_{\bar a}$ lies in $N$ automatically. We give two proofs: route~(a) is self-contained at the empty tuple; route~(b) is the form that generalizes to Lemma~5.14.

\smallskip
\noindent\emph{Route (a) --- orbit form.} Suppose $K$ omits every $p\in N$. Then every $p_{\bar a}$, being realized and $\notin N$, is principal: some atom $b\in L_{T_\lambda}(\bar x)$ generates it over $T_\lambda$, with $\qr(b)<\lambda$ [AL(vii-1)]. The generation sentences $\forall\bar x(b\to F)$, $F\in p_{\bar a}$, lie in $T_\lambda$, so they hold in $K$: every $\bar c$ with $K\models b(\bar c)$ satisfies all of $p_{\bar a}$, hence $\tp_{L_{T_\lambda}}(\bar c)=p_{\bar a}$. By AL(ii), $\bar c$ and $\bar a$ share the complete $T^K_\lambda$-type; by AL(v) [pin $\omega_1^K=\lambda$] they are automorphic. Conversely automorphic tuples satisfy the same fragment formulas, in particular $b$. So the $b$-realizers are exactly $\Aut(K)\cdot\bar a$: every orbit of $K$ is parameter-free definable by a formula of $\qr<\lambda$, hence $\Sin{\lambda}$---(U1) at $\alpha=\lambda$ [Mon15, Thm.~1.1, pp.~5427--5428], so $\cSR(K)\le\lambda$, contradicting Lemma~E [$\cSR(K_{\lambda+1})=\lambda+1$ exactly].

\smallskip
\noindent\emph{Route (b) --- pointed form.} $\cSR(K)=\lambda+1$ [Lemma~E] means $\neg$(U2 at $\lambda$): no $\Pin{\lambda+1}$ Scott sentence; [Mon15, Thm.~1.1] [(U2)$\iff$(U5), $\emptyset$-parameter instance] gives $\neg$(U5 at $\lambda$): some $\Pin{\lambda}$-type $\Gamma^*(\bar z)$ realized in $K$ is not $\Sin{\lambda}$-supported within $K$ [Mon15, Def.~3.1, p.~5433]. Let $\bar a\models\Gamma^*$. If $p_{\bar a}$ were principal with atom $b$, then---exactly as in route (a), under $y_g$---every $b$-realizer is automorphic to $\bar a$ [AL(v)], hence realizes $\Gamma^*$; and $b$ is realized with $\qr(b)<\lambda$, hence $\Sin{\lambda}$ [AL(vii-1)]: $b$ is a realized $\Sin{\lambda}$-support of $\Gamma^*$ within $K$, a contradiction. So $p_{\bar a}$ is non-principal: $p_{\bar a}\in N\cap\trace(K)\ne\emptyset$.

\smallskip
\noindent\emph{A further consequence of route (b).} The fragment type of any realizer of any unsupported $\Pi_\lambda$-type is a member of $\trace(K)$---a pointed witness; homogeneity (available under $y_g$) substitutes for $\BRctree$ at exactly one type. This is why $y_g$ is needed in Lemma~5.13, whereas the corresponding statement at the empty tuple is not.
\begin{rrem}[Sharpness]
The argument uses the only fact available about the label---the exactness of its rank, Lemma~E---and yields exactly one type: a single non-$\Sigma_\lambda$ orbit already refutes (U1) at $\lambda$; nothing iterates. The companion one-liner in Sacks' terms ($\srS(K)=\lambda+1$ would say $K$ is not atomic over $T^K_\lambda$) is not available to us: lifting it to $T_\lambda$ requires exactly the relation asked for in Question~3 of \S6, and the value of $\srS$ would in addition require the transport of \S2.5.
\end{rrem}
\noindent\emph{Consequence:} under $y_g(\lambda)$, the first of the two weaker forms recorded at Question~2 of \S6 holds: for some $p_0\in N$, every member of $\Ccl_{\lambda+1}$ satisfies the $\Sin{\lambda+1}$ sentence $\sigma_{p_0}$ (Lemma~5.13 gives $K_{\lambda+1}\models\sigma_{p_0}$; every $M\in\Ccl_{\lambda+1}$ is $\equiv_{\lambda+1}K_{\lambda+1}$ by Lemma~A(vi), and $\equiv_{\lambda+1}$ preserves $\Sin{\lambda+1}$ sentences, \S2.1).\\
\noindent The only point at which a formula of complexity $\lambda+1$ enters is
the computation of the complexity of $\sigma_p$, through Lemma~AL(vii-1). The
statement $K\models T_\lambda$ is Lemma~AL(vii-2).

\begin{rlem}[5.14 (EXT): a relative form of Lemma 5.13]
$\lambda\in\Szero$ with $y_g(\lambda)$, $K:=K_{\lambda+1}$: for every $\bar c\in K^{<\omega}$ there is $\bar d$ with $\tp_{L_{T_\lambda}}(\bar c\bar d)\in N\cap\trace(K)$---$\trace(K)$ is cofinal over every realized tuple. Strictly generalizes NS$^-$ route~(b) (the $\bar c=\emptyset$ case).
\end{rlem}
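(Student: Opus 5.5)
We argue by contradiction, following route~(a) of Lemma~5.13 with the fixed tuple $\bar c$ carried along as an existentially quantified prefix. As in the preliminaries to Lemma~5.13, $y_g(\lambda)$ gives $\omega_1^K=\lambda$ (Theorem~5.2), and AL(vii-2) gives $K\models T_\lambda$. So $K$ is a fiber model, AL(v) applies inside $K$, and every realized complete $L_{T_\lambda}$-type is an $n$-type of $T_\lambda$ (AL(i)). A realized non-principal such type lies in $N\cap\trace(K)$ automatically. We allow $\bar d$ to be empty, so the case $\bar c=\emptyset$ recovers Lemma~5.13.

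Fix $\bar c\in K^m$ and suppose, for contradiction, that $\tp_{L_{T_\lambda}}(\bar c\bar d)$ is principal for every $\bar d\in K^{<\omega}$. We will show that every automorphism orbit of $K$ is parameter-free $\Sin{\lambda}$-definable. Take any $\bar e\in K^n$. The type $p:=\tp_{L_{T_\lambda}}(\bar c\bar e)$ is principal, generated over $T_\lambda$ by an atom $b(\bar x,\bar y)\in L_{T_\lambda}$, and $\qr(b)<\lambda$ by AL(vii-1). We claim the orbit $\Aut(K)\cdot\bar e$ is defined by $\exists\bar x\,b(\bar x,\bar y)$.

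\begin{itemize}[leftmargin=1.6em]
\item If $K\models b(\bar c',\bar e')$, then the generation sentences $\forall\bar x\bar y(b\to F)$, for $F\in p$, lie in $T_\lambda=\Th_{L_{T_\lambda}}(K)$. Hence $\tp_{L_{T_\lambda}}(\bar c'\bar e')=p$. By AL(ii) and AL(v), which uses the pin $\omega_1^K=\lambda$, some $\sigma\in\Aut(K)$ sends $\bar c\bar e$ to $\bar c'\bar e'$, so $\bar e'\in\Aut(K)\cdot\bar e$.
\item Conversely, if $\bar e'=\sigma(\bar e)$, then $\bar x:=\sigma(\bar c)$ witnesses $b(\bar x,\bar e')$, since fragment formulas are automorphism-invariant.
\end{itemize}

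The formula $\exists\bar x\,b$ has quantifier rank $<\lambda$ because $\lambda$ is a limit, so it is $\Sin{\lambda}$. The parameter $\bar c$ has disappeared into the existential quantifier, so the definition is parameter-free.

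Thus (U1) holds at $\alpha=\lambda$ ([Mon15, Thm.~1.1]), so $\cSR(K)\le\lambda$. This contradicts Lemma~E, which gives $\cSR(K_{\lambda+1})=\lambda+1$. Hence some $\bar d$ makes $\tp_{L_{T_\lambda}}(\bar c\bar d)$ non-principal. Being realized, it lies in $N\cap\trace(K)$, which is the claim.

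The main obstacle is avoiding parameters. A naive relativization would show only that $(K,\bar c)$ has low rank. That yields a parameterized Scott sentence (an $\SRp$ bound), which does not contradict $\cSR(K)=\lambda+1$. The device of quantifying out $\bar c$ in $\exists\bar x\,b(\bar x,\bar y)$ removes this obstacle. It works because AL(v) gives automorphisms of the whole tuple $\bar c\bar e$, not merely of $\bar e$.
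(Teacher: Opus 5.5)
Your proof is correct, and it takes a different route from the paper's.

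\textbf{The paper's route.} The paper keeps the parameter $\bar c$ in place:
\begin{enumerate}[label=(\arabic*),leftmargin=2em]
\item Proposition~7.1 gives $\SRp(K)=\lambda+1$, which rests on Alvir's catalogue of Scott complexities and Miller's theorem. By [Alv21, Prop.~1.10(1)] this yields $\cSR(K,\bar c)\ge\lambda+1$.
\item Hence (U5) fails for the expansion $(K,\bar c)$. This produces a $\Pin{\lambda}$-type $\Gamma^*(\bar z;\bar c)$ realized in $K$ but not $\Sin{\lambda}$-supported over $\bar c$.
\item For a realizer $\bar d$, suppose $\tp_{L_{T_\lambda}}(\bar c\bar d)$ had an atom $b$. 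Then AL(v), applied coordinatewise so that the automorphism fixes $\bar c$, makes $b(\bar c,\bar z)$ a realized support of $\Gamma^*$, a contradiction.
\end{enumerate}
So the ``naive relativization'' you set aside is exactly the paper's argument. The obstacle you name, that an $\SRp$ bound does not contradict $\cSR(K)=\lambda+1$, is removed there by Proposition~7.1.

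\textbf{What your route buys.} You avoid all of this by projecting $\bar c$ away. You use only (U1) for $K$ itself, Lemma~E and AL(v); nothing from \S7, from [Alv21], or from the support notion of [Mon15, Def.~3.1] is needed.

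Your computation in fact shows more. The formula $\exists\bar x\,b(\bar x,\bar y)$ is an atom of $\tp_{L_{T_\lambda}}(\bar e)$ over $T_\lambda$, since the generation sentences pass to it by finitary logic. So restrictions of principal types to subtuples are principal, and the lemma is an immediate corollary of Lemma~5.13. Take $\bar d$ to be a tuple $\bar a$ realizing some $p\in\trace(K)$. Then $\tp_{L_{T_\lambda}}(\bar c\bar a)$ restricts to $p$, so it cannot be principal.

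\textbf{What the paper's route buys.} It gives pointed information that yours does not: \emph{every} realizer $\bar d$ of \emph{every} $\Pin{\lambda}$-type unsupported over $\bar c$ has $\tp_{L_{T_\lambda}}(\bar c\bar d)\in N\cap\trace(K)$. This ties the trace to the unsupported types of $(K,\bar c)$, as in the discussion around Question~2 and $\BRcuns$. For the cofinality statement as stated, that extra information is not needed.
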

\begin{proof}
Fix $\bar c$. \emph{(1) Unsupported supply over $\bar c$.} $\SRp(K)=\lambda+1$ (Proposition~7.1; no rank transport is involved) gives $\cSR(K,\bar c)\ge\lambda+1$ for every finite $\bar c$ [Alv21, Prop.~1.10(1), p.~1709]. So $(K,\bar c)$ fails (U2) at $\alpha=\lambda$; [Mon15, Thm.~1.1] applied to the countable structure $(K,\bar c)$ gives $\neg$(U5 at $\lambda$) there: some $\Pin{\lambda}$-type $\Gamma^*(\bar z;\bar c)$ realized in $K$ is not $\Sin{\lambda}$-supported over $\bar c$ ([Mon15, Def.~3.1], read in the expansion). \emph{(2)} Let $\bar d\models\Gamma^*(\bar z;\bar c)$ in $K$, and $q:=\tp_{L_{T_\lambda}}(\bar c\bar d)$---realized, an $n$-type of $T_\lambda$ [$K\models T_\lambda$: AL(vii-2)]. \emph{(3) The coordinatewise-AL(v) step (written out here once; Lemma~5.13 and \S5.15 use the same step).} Suppose $q$ had an atom $b(\bar x,\bar z)$ over $T_\lambda$. Take any $\bar d'$ with $K\models b(\bar c,\bar d')$. The generation sentences $\forall\bar x\bar z(b\to F)$, $F\in q$, lie in $T_\lambda$ and hold in $K$, so $\tp_{L_{T_\lambda}}(\bar c\bar d')=q=\tp_{L_{T_\lambda}}(\bar c\bar d)$. Now AL(v) [pin $\omega_1^K=\lambda$, from $y_g$ via Thm.~5.2] applies to the concatenated tuples and matches them coordinatewise: it supplies $\sigma\in\Aut(K)$ with $\sigma(\bar c\bar d)=\bar c\bar d'$ coordinate-by-coordinate---the target's first $|\bar c|$ coordinates are $\bar c$ itself, so $\sigma$ fixes $\bar c$ pointwise and $\sigma(\bar d)=\bar d'$. $\Pin{\lambda}$ formulas are automorphism-invariant, so $\bar d'=\sigma(\bar d)\models\Gamma^*(\bar z;\sigma(\bar c))=\Gamma^*(\bar z;\bar c)$. \emph{(4)} So $b(\bar c,\bar z)$ is realized ($\bar d$ witnesses) and every realizer of it realizes $\Gamma^*(\cdot;\bar c)$; $b$ has $\qr<\lambda$ [AL(vii-1)], so $b(\bar c,\bar z)$ is $\Sin{\lambda}$ over $\bar c$: a realized support of $\Gamma^*$ over $\bar c$ in the exact sense of [Mon15, Def.~3.1], contradicting~(1). \emph{(5)} Hence $q$ is non-principal, so $q\in N$, and $\bar c\bar d$ realizes it: $q\in N\cap\trace(K)$.
\end{proof}
\noindent\emph{Remark.} Homogeneity substitutes for the missing transfer principle $\BRctree$ at every tuple, not just at one type---the difficulty lies entirely in the supply of unsupported types, not in the passage between fragments. To summarize: the fragment types produced this way form a non-empty subfamily of $N$ (Lemma~5.13) that is closed under restriction ((C2)) and cofinal in the sense of Lemma~5.14; whether it exhausts $N$ is Question~2 of \S6; no way of steering the unsupported supply onto a prescribed $p\in N$ is available from the sources cited here.\\
\noindent The statement $K\models T_\lambda$, used at steps (2) and (3), is
Lemma~AL(vii-2). The other sources are [Mon15, Thm.~1.1 and Def.~3.1, pp.~5427--5428
and 5433], [Alv21, Prop.~1.10(1), p.~1709], and Lemma~AL(i), (ii), (v), (vii-1)
together with the closure property established in the proof of AL(iii).

\subsection*{5.15. Support localization, reduction to $B_\lambda$, and a characterization of $\REALB$}

\noindent\emph{The $\Phi$-maps (well-definedness).} For a fiber model $M$ and $q\in P\sqcup\trace(M)$: $\Phi_M(q):=$ the complete $\Pin{\lambda}$-type of any realizer of $q$ in $M$. Well-defined: two realizers of $q$ share the complete $L_{T_\lambda}$-type (both equal $q$); AL(v) [pin $\omega_1^M=\lambda$, the fiber's; the 5.14 step-(3) mechanism] makes them automorphic; $\Pin{\lambda}$ formulas are automorphism-invariant. \emph{Two conventions:} \textup{(i)} at limit $\lambda$, a formula of $\qr<\lambda$ is both $\Pin{\lambda}$- and $\Sin{\lambda}$-expressible; in particular every $L_{T_\lambda}$-formula is [AL(vii-1)], so $q\subseteq\Phi_M(q)$ in the expressible sense, and $\Phi_M(q)$ is $\Pi_\lambda$-complete over $q$. \textup{(ii)} a sentence of $\qr<\lambda$ transfers across $\equiv_\lambda$ in both directions [AL(vii-1)'s closing line].

\begin{rlem}[5.15: support localization]
$\lambda\in\Szero$; $M,M'$ fiber models; $q$ realized in both; $\Gamma:=\Phi_M(q)$. If $\Gamma$ is $\Sin{\lambda}$-supported within $M$ [Mon15, Def.~3.1, p.~5433] by $\theta$, then $\Phi_{M'}(q)=\Gamma$, and the same $\theta$ supports $\Gamma$ within $M'$.
\end{rlem}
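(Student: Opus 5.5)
The plan is to transfer the two defining sentences of ``$\theta$ supports $\Gamma$'' from $M$ to $M'$ across $\equiv_\lambda$. I then use that $\Phi$ is well-defined to identify the $\Pin{\lambda}$-types, and close the argument by symmetry.

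\emph{Step 1 (the models are $\equiv_\lambda$).} By Lemma~5.9, $M$ and $M'$ both lie in $\Ccl_\lambda$, so $M\equiv_\lambda M'$. By \S2.1, they therefore satisfy the same $\Pin{\lambda}$ and the same $\Sin{\lambda}$ sentences.

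\emph{Step 2 (the support sentences transfer).} Unpacking [Mon15, Def.~3.1], ``$\theta$ supports $\Gamma$ within $M$'' says two things. First, $M\models\exists\bar x\,\theta$, and this is a $\Sin{\lambda}$ sentence. Second, for every $\psi\in\Gamma$, $M\models\forall\bar x(\neg\theta\vee\psi)$; since $\neg\theta\in\Pin{\lambda}$ and $\psi\in\Pin{\lambda}$, this is a $\Pin{\lambda}$ sentence. By Step~1 all of these sentences hold in $M'$. Hence $\theta$ is realized in $M'$, and every $\theta$-realizer in $M'$ satisfies all of $\Gamma$.

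\emph{Step 3 ($\theta$-realizers realize $q$).} Let $\bar c'$ realize $\theta$ in $M'$. By convention~(i) of \S5.15, every $L_{T_\lambda}$-formula is $\Pin{\lambda}$-expressible (AL(vii-1)), and $q\subseteq\Gamma$ in that sense. So $\bar c'$ satisfies every member of $q$. Since $q$ is a maximal type of $T_\lambda$, this gives $\tp_{L_{T_\lambda}}(\bar c')=q$. By the well-definedness of $\Phi_{M'}$ (AL(v), using the pin $\omega_1^{M'}=\lambda$), the complete $\Pin{\lambda}$-type of $\bar c'$ is $\Phi_{M'}(q)$. Step~2 then gives $\Gamma\subseteq\Phi_{M'}(q)$.

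The same reasoning in $M$ shows that every $\theta$-realizer in $M$ realizes $q$. I need this in Step~4.

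\emph{Step 4 (reverse inclusion by symmetry).} This is the step I expect to be the main obstacle. The direct approach would transfer ``$\theta$-realizers satisfy no $\psi\notin\Gamma$'', but that statement is not of complexity $\le\lambda$. I avoid it by swapping the roles of $M$ and $M'$.

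Put $\Gamma':=\Phi_{M'}(q)$. By Steps~2--3, $\theta$ is realized in $M'$ and every $\theta$-realizer in $M'$ realizes $q$. By well-definedness, each such realizer therefore has complete $\Pin{\lambda}$-type exactly $\Gamma'$. So $\theta$ supports $\Gamma'$ within $M'$.

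Now apply Steps~2--3 with $M'$ in place of $M$ and $M$ in place of $M'$. This is legitimate because $q$ is realized in both models and both are fiber models. It yields $\Gamma'\subseteq\Phi_M(q)=\Gamma$.

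Combining the two inclusions gives $\Phi_{M'}(q)=\Gamma$. Step~2 already shows that $\theta$ is realized in $M'$ and that all its realizers satisfy $\Gamma$, so the same $\theta$ supports $\Gamma$ within $M'$. No passage between fragment types and $\Pin{\lambda}$-types is used beyond AL(vii-1), so $\BRctree$ is not invoked.
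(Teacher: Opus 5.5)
Your proof is correct, but the reverse inclusion takes a different route from the paper's. Steps 1--3 match the paper's steps (1)--(4): the paper writes out $\forall\bar x(\theta\to\gamma)$ as $\bigwedge_{i,j}\forall\bar x(\neg\theta_i\vee\gamma_j)$, where you invoke closure of $\Pin{\lambda}$ under finite disjunction and $\forall$, which is the same computation.

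The paper's step (5) does not use symmetry. Having $\Gamma\subseteq\Phi_{M'}(q)$, it supposes $\psi\in\Phi_{M'}(q)\setminus\Gamma$ and writes $\neg\psi=\bigvee_k\chi_k$ with $\qr(\chi_k)<\lambda$. It picks a disjunct $\chi$ true at the $M$-realizer of $q$; since $\chi$ is $\Pin{\lambda}$-expressible, $\chi\in\Gamma\subseteq\Phi_{M'}(q)$. So the $M'$-realizer satisfies $\neg\psi$, a contradiction. In other words, at a limit $\lambda$ an inclusion between complete $\Pin{\lambda}$-types is already an equality, because such a type is determined by its $\qr<\lambda$ part. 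The obstacle you anticipated in Step 4 therefore never arises.

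Your argument instead notes that $\theta$ also supports $\Gamma'=\Phi_{M'}(q)$ within $M'$, and reruns the transfer from $M'$ back to $M$. This uses only the symmetry of $\equiv_\lambda$ and the well-definedness of $\Phi$ in both models. It is valid, and it avoids the decomposition of $\neg\psi$ entirely.

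The paper's version buys reusability. Its step (5) needs no support formula, and it is invoked again in Corollary 5.15(e), direction (e1)$\Rightarrow$(e2), at cells that may be unsupported. There is no $\theta$ available there with which to run your symmetric transfer.
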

\begin{proof}
(1) $M\equiv_\lambda M'$ [Lemma~5.9, used as an input]. (2) \emph{The support data, complexity-computed (the single-disjunct-guard computation, once).} At limit $\lambda$, $\theta\in\Sin{\lambda}$ has the raw form $\theta=\bigvee_{i<\omega}\theta_i$ with $\qr(\theta_i)<\lambda$ (only the form of the definition is used here). The facts about $\theta$ in $M$ from [Mon15, Def.~3.1]: (s1) $M\models\exists\bar x\,\theta$; (s2) $M\models\forall\bar x(\theta\to\gamma)$ for each $\gamma\in\Gamma$. Complexities: $\exists\bar x\,\theta\iff\bigvee_i\exists\bar x\,\theta_i$, each disjunct of $\qr<\lambda$, so $\exists\bar x\,\theta\in\Sin{\lambda}$; for (s2), write $\gamma=\bigwedge_{j<\omega}\gamma_j$ with $\qr(\gamma_j)<\lambda$: $\forall\bar x(\theta\to\gamma)\iff\bigwedge_{i,j}\forall\bar x(\neg\theta_i\vee\gamma_j)$, each conjunct of $\qr<\lambda$, so the countable conjunction is $\Pin{\lambda}$. \emph{Pairing each disjunct $\theta_i$ with each conjunct $\gamma_j$ keeps every constituent strictly below $\lambda+1$}, where the unfactored $\forall\bar x(\Sigma_\lambda\to\Pi_\lambda)$ would naively sit at $\Pi_{\lambda+1}$. (3) \emph{Transfer.} All of (s1)/(s2) transfers across $\equiv_\lambda$, both directions [(ii)]. So $\theta$ is realized in $M'$ and implies, within $M'$, every $\gamma\in\Gamma$. (4) \emph{Cell identification.} Pick $\bar e\models\theta$ in $M'$; by (3) $\bar e$ satisfies every $\gamma\in\Gamma\supseteq q$ [(i)], so $\tp_{L_{T_\lambda}}(\bar e)=q$ [(i)]. AL(v) in $M'$ [pin $\omega_1^{M'}=\lambda$] relocates onto any $q$-realizer $\bar f$: $\tp_{\Pin{\lambda}}(\bar f)\supseteq\Gamma$. Hence $\Phi_{M'}(q)\supseteq\Gamma$. (5) \emph{Equality.} Suppose $\psi\in\Phi_{M'}(q)\setminus\Gamma$, $\psi\in\Pin{\lambda}$. Then $M\models\neg\psi(\bar a)$ at the $M$-realizer $\bar a$ of $q$; $\neg\psi\in\Sin{\lambda}$ is $\bigvee_k\chi_k$ with $\qr(\chi_k)<\lambda$; some true disjunct $\chi$ has $M\models\chi(\bar a)$; $\chi$, of $\qr<\lambda$, is $\Pin{\lambda}$-expressible [(i)], so $\chi\in\Gamma\subseteq\Phi_{M'}(q)$---but then the $M'$-realizer of $q$ satisfies $\chi$, hence $\neg\psi$, contradicting $\psi\in\Phi_{M'}(q)$. So $\Phi_{M'}(q)=\Gamma$. (6) \emph{Same-$\theta$ support in $M'$:} (3)'s transferred sentences are exactly the clauses of [Mon15, Def.~3.1] for $\theta$ and $\Gamma$ in $M'$.
\end{proof}
\noindent Step (1) uses Lemma~5.9.

\smallskip
\noindent\textbf{Corollaries.}
\begin{itemize}[leftmargin=1.7em]
\item[\textup{(a)}] \emph{Principal cells are free.} For $q\in P$ with atom $b$: every $b$-realizer in a fiber $M$ realizes $q$ and all $q$-realizers are automorphic [AL(v)], so all $b$-realizers share $\Phi_M(q)$; $b$ is realized and of $\qr<\lambda$, hence $\Sin{\lambda}$ [AL(vii-1)]: a realized $\Sin{\lambda}$-support of $\Phi_M(q)$ within $M$. Lemma~5.15 then makes $\Phi_\cdot(q)$ constant across the fiber at every principal $q$.
\item[\textup{(b)}] \emph{Fiber-uniformity of support.} For every cell $q$: the support-status of $\Phi_\cdot(q)$, and the supporting formula itself, are fiber-uniform. [Here Lemma~5.9 and AL(v) supply the realization that the weaker status-only transfer would note as non-transferring.]
\item[\textup{(c)}] \emph{Reduction to $B_\lambda$.} Every fragment type realized in any fiber model is realized in $B_\lambda$: $P\sqcup\trace(M)\subseteq P\sqcup N$, and $B_\lambda$ realizes all of $N$ ([Sac07, p.~15]) and all of $P$ (Step~P). Pointwise comparisons of $\Phi$ therefore factor through $B_\lambda$, and $\BRctree$---in the $\Phi$-formulation: $\Phi_M(q)=\Phi_{M'}(q)$ for all fiber $M,M'$ and common realized $q$---$\iff$ its $(M,B_\lambda)$-restricted instance $\iff[\Phi_M=\Phi_{B_\lambda}$ on $P\sqcup\trace(M)$ for every fiber $M]$.
\item[\textup{(d)}] With (a)$+$(b): $\BRctree$ is equivalent to agreement of $\Phi$ with $B_\lambda$ at the \emph{unsupported} cells only---supported cells (principal ones included, by (a)) are free by Lemma~5.15.
\item[\textup{(e)}] \emph{The $\REALB$ characterization (the object of Theorem~5.12, Clause~3).} At $\lambda\in\Szero$ with $y_g(\lambda)$---so $K:=K_{\lambda+1}$ is on the fiber [$\omega_1^K=\lambda$ by Thm.~5.2; $K\models T_\lambda$ by AL(vii-2)]---the following are equivalent: \textup{(e1)} $\REALB$; \textup{(e2)} $\Phi_K(q)=\Phi_{B_\lambda}(q)$ for every $q\in P\sqcup\trace(K)$; \textup{(e3)} $\Phi_K(q)=\Phi_{B_\lambda}(q)$ for every unsupported $q\in\trace(K)$.
\end{itemize}
\noindent\emph{Proof of (e).} (e2)$\iff$(e3): every $q\in P\sqcup\trace(K)$ is realized in $B_\lambda$ [(c)], and supported cells agree for free [Lemma~5.15 $+$ (a)]. (e2)$\Rightarrow$(e1): $\Gamma$ realized in $K$ at $\bar a$; $q:=\tp_{L_{T_\lambda}}(\bar a)\in P\sqcup\trace(K)$; $\Gamma\subseteq\Phi_K(q)=\Phi_{B_\lambda}(q)$, realized in $B_\lambda$. (e1)$\Rightarrow$(e2): fix $q$ with $K$-realizer $\bar a$; $\Phi_K(q)$ is realized in $B_\lambda$ at some $\bar b'$; $\bar b'$'s fragment type is exactly $q$ [(i)]; AL(v) in $B_\lambda$ relocates onto any $q$-realizer: $\Phi_{B_\lambda}(q)\supseteq\Phi_K(q)$; equality as in step~(5) of Lemma~5.15. \hfill$\square$\\
\noindent In (e), the statement that $K$ lies on the fiber uses Lemma~AL(vii-2).

\smallskip
\noindent\textbf{The open case (restated).} $\BRcuns$: at $\lambda\in\Szero$, for every $q\in N$ realized in a fiber model $M$ with $\Phi_M(q)$ not $\Sin{\lambda}$-supported within $M$: $\Phi_M(q)=\Phi_{B_\lambda}(q)$. By (c) and (d) this is equivalent to the full statement $\BRctree$; the instances at the empty tuple, within a single cell, at principal cells and at supported cells are proved above. The general case is open; it is Question~3 of \S6.

\section{Open questions}

Throughout this section $\lambda$ is a limit in $\Szero$, so that we may work on
the fiber above $\lambda$; the objects concerned are the labels $K_\lambda$ and
$K_{\lambda+1}$, the models $A_\lambda$ and $B_\lambda$ of [Sac07, Thm.~5.3], and
the conditions $x,y_g,y_s,y_B$ of \S3.6. The first three questions all reduce to
one difficulty: we have no control, from the side of the label, over the
partial $\Pin{\lambda}$-types that are not supported.

\smallskip
\noindent\textbf{Question 1 ($\dq$).} Does $y_g(\lambda)$ imply $y_B(\lambda)$?
Equivalently, by Clause~4 of Theorem~5.12, does $\NSK(\lambda)$ hold, that is, does
$y_g(\lambda)$ imply $\mathrm{NodeSat}(K_{\lambda+1})$? The implications
$B_\lambda\iso K_{\lambda+1}\iff B_\lambda\equiv_{\lambda+1}K_{\lambda+1}\iff
B_\lambda\in\Ccl_{\lambda+1}$ are proved (Lemma~A(vi) and Lemma~L at $\lambda+1$),
and each of them implies $y_g(\lambda)$, since the isomorphism gives
$\omega_1^{K_{\lambda+1}}=\lambda$ and hence $y_g$ by Thm.~5.2. The converse is
what is at issue. An equivalent formulation is a rigidity statement about
$W_\lambda^{\mathrm{high}}$: $y_g(\lambda)$ asserts only that \emph{some} high
witness at $\lambda$ glues, and different witnesses at the same $\lambda$ might a
priori behave differently.

\smallskip
\noindent\textbf{Question 2 ($\NSKp$).} Write $\NSKp(\lambda)$ for the
implication $y_g(\lambda)\Rightarrow\trace(K_{\lambda+1})=N$. Does it hold? By
(C3) we have $N=\trace(B_\lambda)\in\Spec_\lambda$, and under $y_g$ the label
$K_{\lambda+1}$ lies on the fiber and is determined up to isomorphism by its
trace (Theorem~5.6 together with Cor.~5.10). Hence, under $y_g(\lambda)$, the
three statements $\trace(K_{\lambda+1})=N$, $K_{\lambda+1}\iso B_\lambda$ and
$y_B(\lambda)$ are equivalent, so $\NSKp$ is equivalent to $\NSK$, in agreement
with Theorem~5.12. Two weaker forms may be easier: that every member of
$\Ccl_{\lambda+1}$ satisfies $\sigma_p$ for some particular $p\in N$
(established, under $y_g$, at the Consequence after Lemma~5.13); and that
$S_{K_{\lambda+1}}=S_{B_\lambda}$ under $y_g$, in the notation of (C0). Lemmas 5.13 and 5.14 give
$\trace(K_{\lambda+1})\ne\emptyset$ and show it to be cofinal in the sense of
Lemma~5.14; whether it exhausts $N$ is the question. A related sub-question,
recorded at the Remark after Clause~2 of \S5.12: does
$\mathrm{NodeSat}(K_{\lambda+1})$ by itself force
$\omega_1^{K_{\lambda+1}}=\lambda$ --- equivalently $y_g(\lambda)$, by
Thm.~5.2? A positive answer would remove the hypothesis $y_g$ from Clause~2 of
\S5.12 and make $\mathrm{NodeSat}(K_{\lambda+1})$ alone equivalent to
$y_B(\lambda)$, by Clause~1.

\smallskip
\noindent\textbf{Question 3 ($\BRcuns$).} For $\lambda\in\Szero$ and every $q\in N$
realized in a fiber model $M$ with $\Phi_M(q)$ not $\Sin{\lambda}$-supported
within $M$: is $\Phi_M(q)=\Phi_{B_\lambda}(q)$? By Corollaries 5.15(c) and (d)
this is equivalent to the full statement $\BRctree$, that $\Phi$ agrees across
the fiber; the instances at the empty tuple, within a single cell, at principal
cells, and at supported cells are all proved above. The obstruction is that we
know of no published relation, in either direction, between equality of fragment
types and equality of $\Pin{\lambda}$-types: the coincidence of $\srS$ with
$\SRsym$ at [Mon15, p.~5432] is at the level of ranks only, and the analysis in
[Sac07, \S2] is at the level of isomorphism. The statement is plausibly classical,
but we do not use it without a proof or a reference.

\smallskip
\noindent\textbf{Question 4.} Is $\emptyset\in\Spec_\lambda$ --- equivalently, by
(C4), is $\omega_1^{A_\lambda}=\lambda$, equivalently does $x(\lambda)$ hold?
Lemma~FL gives $\omega_1^{A_\lambda}\ge\lambda$ at once; the reverse inequality is
not available, and [Sac07, Thm.~5.3] asserts nothing about $\omega_1^{A_\lambda}$.
The question mirrors a difficulty recorded in [Sac07] itself: the remark
following (2.12) at [Sac07, p.~6] notes that the model produced there has
Scott rank $\alpha$ or $\alpha+1$, and that forcing the value $\alpha+1$ is a
problem ``addressed in this paper but far from resolved''; Question~4 asks for
the mirror control, forcing the value $\lambda$ at the atomic model.

\smallskip
\noindent\textbf{Question 5.} Which of $y_B(\lambda)$, $\neg y_B(\lambda)$ holds,
at a given $\lambda$ or stationarily often? Theorem~5.4(ii$'$) gives the
dichotomy that one of the two sides is stationary; the pointwise question is
open. We note that $y_B(\lambda)$ is a single question at the level of the
back-and-forth relations, namely whether $B_\lambda\equiv_{\lambda+1}K_{\lambda+1}$,
rather than an isomorphism problem.

\smallskip
\noindent\textbf{Question 6.} Does some code of $\varphi$ lie in $L(c_0,T)$? This
is the antecedent of Lemma~CC(c); equivalently, does the hull construction of
[Sac07, \S5] place $\tc(\{\varphi\})$ below $c_0$? The text there does not settle
it.

\section{Context: Scott complexity of the labels}

[Alv21, Thm.~1.6, p.~1708]: the possible Scott complexities of countable structures are $\Pi_\alpha$ ($\alpha\ge1$), $\Sigma_\alpha$ ($\alpha\ge3$ a successor ordinal), and d-$\Sigma_\alpha$ (differences of $\Sigma_\alpha$ classes; $\alpha\ge1$ a successor ordinal); each is realized. We use the following from that paper: pp.~1706--1707, the convention that (simplest Scott sentence $\Sigma_\alpha$ or $\Pi_{\alpha+1}\Rightarrow$ Scott rank $\alpha$), the Wadge ordering ($\Sigma_\alpha,\Pi_\alpha<\text{d-}\Sigma_\alpha<\Sigma_{\alpha+1},\Pi_{\alpha+1}$), and Miller's [Mil83] both-$\Sigma_\alpha$-and-$\Pi_\alpha\Rightarrow\text{d-}\Sigma_\gamma$ for some $\gamma<\alpha$; p.~1709, Prop.~1.10(1)--(2) ($\Sigma_{\alpha+1}$ Scott sentence $\iff$ parameterized $\Pi_\alpha$; $\SRp=\min_{\bar c}\cSR(M,\bar c)$). The $(\cSR,\SRp)$ fingerprint table ($\Pi_{\alpha+1}\mapsto(\alpha,\alpha)$; $\Pi_\lambda\mapsto(\lambda,\lambda)$; d-$\Sigma_{\alpha+1}\mapsto(\alpha+1,\alpha)$; $\Sigma_{\lambda+1}\mapsto(\lambda+1,\lambda)$; $\Sigma_{\alpha+2}\mapsto(\alpha+2,\alpha)$) is not tabulated in [Alv21], but follows routinely from those four items.

\begin{rprop}[7.1]
$\SC(K_\delta)=\Pin{\delta+1}$ exactly and $\SRp(K_\delta)=\delta$ for every $\delta\ge\qr(\varphi)$; hence $(\cSR,\SRp)(K_{\lambda+1})=(\lambda+1,\lambda+1)$ and $\cSR(K_{\lambda+1},\bar c)\ge\lambda+1$ for every finite $\bar c$ ([Alv21, Prop.~1.10(1)]).
\end{rprop}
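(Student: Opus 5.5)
The plan is to compute the Scott complexity of $K_\delta$ from Lemma~E together with the collapse mechanism already used there, and then to read off $\SRp$ from the fingerprint table of \S7. By Lemma~E, $\cSR(K_\delta)=\delta$ exactly for $\delta\ge\qr(\varphi)$. So $K_\delta$ has a $\Pin{\delta+1}$ Scott sentence, which shows $\SC(K_\delta)\le\Pin{\delta+1}$ in the Wadge ordering recalled from [Alv21]. It also shows that $K_\delta$ has no $\Pin{\gamma+1}$ Scott sentence for any $\gamma<\delta$. What remains is to exclude every Scott complexity that is strictly below $\Pin{\delta+1}$ or incomparable with it: $\Sin{\delta+1}$, d-$\Sigma_\delta$, $\Sin{\delta}$, $\Pin{\delta}$, and everything lower.

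The single tool is the collapse argument from step~(5) of Lemma~E and from Lemma~F(iv). Suppose some Scott sentence $\chi$ of $K_\delta$ is a finite Boolean combination of sentences, each lying in $\Pin{\gamma}\cup\Sin{\gamma}$ for some $\gamma\le\delta$. Every $N\in\Ccl_\delta$ satisfies $N\equiv_\delta K_\delta$. By \S2.1 together with monotonicity, each constituent transfers from $K_\delta$ to $N$, so $N\models\chi$ and hence $N\iso K_\delta$. This collapses the uncountable class $\Ccl_\delta$ to one isomorphism type, contradicting Lemma~B(i).

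This argument disposes of the following complexities directly:
\begin{itemize}
\item $\Pin{\delta}$ and $\Sin{\delta}$, and everything below them;
\item d-$\Sigma_\delta$, when $\delta$ is a successor, since such a sentence is a conjunction of a $\Sin{\delta}$ sentence and a $\Pin{\delta}$ sentence.
\end{itemize}

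The main obstacle is $\Sin{\delta+1}$. A $\Sin{\delta+1}$ sentence true in $K_\delta$ need not transfer across $\equiv_\delta$, so the collapse argument does not apply directly. Here I would use Miller's theorem as quoted from [Alv21, pp.~1706--1707]. If $K_\delta$ had a $\Sin{\delta+1}$ Scott sentence, then, since it also has a $\Pin{\delta+1}$ one, it would have a d-$\Sigma_\gamma$ Scott sentence for some $\gamma<\delta+1$, that is, for some $\gamma\le\delta$. Each of the two conjuncts of that sentence lies in $\Sin{\gamma}$ or $\Pin{\gamma}$ with $\gamma\le\delta$. They therefore transfer across $\equiv_\delta$, and the collapse argument applies again.

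Hence $\SC(K_\delta)=\Pin{\delta+1}$ exactly. The table then gives $(\cSR,\SRp)(K_\delta)=(\delta,\delta)$, using the row $\Pin{\alpha+1}\mapsto(\alpha,\alpha)$ with $\alpha=\delta$. To double-check this via [Alv21, Prop.~1.10(1)]: $\SRp(K_\delta)<\delta$ would produce a $\Sin{\delta+1}$ Scott sentence, which has just been excluded.

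For the final clause, specialize to $\delta=\lambda+1$, which is $\ge\qr(\varphi)$ since $\lambda>\qr(\varphi)$; this gives $(\cSR,\SRp)(K_{\lambda+1})=(\lambda+1,\lambda+1)$. Then [Alv21, Prop.~1.10(2)], $\SRp=\min_{\bar c}\cSR(K,\bar c)$, yields $\cSR(K_{\lambda+1},\bar c)\ge\lambda+1$ for every finite $\bar c$. This is the form consumed in Lemma~5.14.
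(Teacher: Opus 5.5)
Your proof is correct and follows the paper's argument. The collapse of $\Ccl_\delta$ across $\equiv_\delta$ rules out every Scott complexity at level $\le\delta$, Miller's theorem reduces the $\Sin{\delta+1}$ case to that one, and [Alv21, Prop.~1.10(2)] gives the final clause. The only variation is in the $\SRp$ step: the paper uses $\cSR\in\{\SRp,\SRp+1\}$ from [Mon15] with a limit/successor case split, whereas your observation that $\SRp(K_\delta)<\delta$ would directly yield a $\Sin{\delta+1}$ Scott sentence handles both cases at once (with $\SRp\le\cSR$ supplying the other inequality).
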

\begin{proof}
Fix $\delta\ge\qr(\varphi)$ and write $K:=K_\delta$. By Lemma~E, $\cSR(K)=\delta$
exactly, so $K$ has a $\Pin{\delta+1}$ Scott sentence ([Mon15, Thm.~1.1], (U2) at
$\alpha=\delta$), and by the catalogue quoted above $\SC(K)$ is one of its
entries. Three steps.
\emph{(a) No entry at level $\le\delta$.} Suppose $K$ had a Scott sentence
$\theta$ of complexity $\Sigma_\gamma$, $\Pi_\gamma$ or d-$\Sigma_\gamma$ with
$\gamma\le\delta$. Each such $\theta$ is a Boolean combination of $\Sin{\delta}$
sentences, so its truth value transfers across $\equiv_\delta$ (\S2.1). Every
member of $\Ccl_\delta$ is $\equiv_\delta K$ ($K\in\Ccl_\delta$ by Lemma~E, and
$\Ccl_\delta$ is a single $\equiv_\delta$-class), hence would satisfy $\theta$,
hence be isomorphic to $K$ --- collapsing $\Ccl_\delta$ to one isomorphism type,
against the uncountability of Lemma~B(i) (models counted up to isomorphism,
\S3.4).
\emph{(b) $\SC(K)=\Pin{\delta+1}$ exactly.} A $\Sigma_{\delta+1}$ Scott sentence
alongside the $\Pin{\delta+1}$ one would give, by Miller's theorem quoted above,
a d-$\Sigma_\gamma$ Scott sentence for some $\gamma<\delta+1$, contradicting (a);
and the entries d-$\Sigma_{\delta+1}$ and those at levels $\ge\delta+2$ lie
strictly above $\Pin{\delta+1}$ in the Wadge ordering quoted above, so none of
them is the least complexity while the $\Pin{\delta+1}$ Scott sentence exists.
With (a), the only remaining entry is $\Pin{\delta+1}$.
\emph{(c) $\SRp(K)=\delta$.} By [Mon15, p.~5431], $\cSR(K)\in\{\SRp(K),
\SRp(K)+1\}$. If $\delta$ is a limit, $\SRp(K)+1=\delta$ is impossible, so
$\SRp(K)=\delta$. If $\delta=\beta+1$ and $\SRp(K)=\beta$, then some
$(K,\bar c)$ has $\cSR=\beta$, i.e.\ a $\Pin{\delta}$ Scott sentence, so $K$ has
a $\Sigma_{\delta+1}$ Scott sentence ([Alv21, Prop.~1.10(1)] at $\alpha=\delta$)
--- contradicting (b). So $\SRp(K)=\delta$.
The displayed consequences follow at $\delta=\lambda+1$, the last by
[Alv21, Prop.~1.10(2)] as quoted ($\SRp=\min_{\bar c}\cSR(\cdot,\bar c)$).
\end{proof}
\begin{rrem}
The proof is ZFC (the Wadge ordering is used only as the printed order on the
catalogue's entries; no determinacy is invoked). Within \S5 the proposition is
used only at step~(1) of Lemma~5.14; its proof uses nothing from \S5.
\end{rrem}

\subsection*{7.H. The Harrison linear order}

The Harrison linear order $H$ is not a model of $\varphi$, and no proof in \S5
depends on this subsection (the only backward reference is the sharpness of
Corollary~(SR$^\sim$-Nadel), which no proof uses). We include it because it exhibits, outside the setting
of the rest of the paper, the configuration of branch~(III), and because it shows
that the upper bound in Corollary~(SR$^\sim$-Nadel) is attained. Facts specific
to $H$ are taken from [Mon13, p.~4] and [Alv21, p.~1710]; [Sac07] is used only
through its general theorems about an arbitrary countable structure, instantiated
at $A:=H$.

\emph{Setting.} $X\in2^\omega$; $H:=$ the Harrison linear order of $X$, of order type $\omega_1^X\cdot(1+\mathbb Q)$ ([Mon13, p.~4]); $\lambda:=\omega_1^X$ (admissible, [Mon13, p.~4]'s Sacks characterization); $T^H_\delta,L^H_\delta:=$ the canonical tower of $H$ itself ([Sac07, \S2, clauses (1)--(4)]); the ambient theory is $T_H:=$ the (recursive) linear-order axioms, $H\models T_H$.

\smallskip
\noindent\textbf{(H-i) $\omega_1^H=\omega_1^X=\lambda$.} (Both sides are in the rank $\AKR$, which is the rank written $\mathrm{SR}$ in [Mon13].) \emph{Proof.} $\le$: $H$ has an $X$-computable presentation ([Mon13, p.~4]), so $X\in\Sp(H)$ and $\omega_1^H\le\omega_1^X$. $\ge$: [Mon13, p.~4] states, for general $X$, that the Nadel bound is attained at $H$, that is, $\mathrm{SR}(H)=\omega_1^X+1$; the Nadel bound on the same page gives $\mathrm{SR}(H)\le\omega_1^H+1$; so $\omega_1^X+1\le\omega_1^H+1$.

\smallskip
\noindent\textbf{(H-ii) $H$ is a homogeneous model of $T^H_{\omega_1^H}=T^H_\lambda$.} This is [Sac07, (2.6), p.~5] instantiated at $H$; we shall also use the extension form established in its proof at (2.9)--(2.11).

\smallskip
\noindent\textbf{(H-growth).} Every $L^H_\delta$-formula ($\delta\le\lambda$) has infinitary complexity $\qr<\lambda$. \emph{Proof.} (This is the analogue of Lemma~AL(vii-1) for $H$; that lemma is not applicable, since it concerns the node theory $T_\lambda$ at $\lambda\in\Szero$, whereas here the tower is that of $H$ itself.) $d(\delta):=\sup\{\qr(F)+1:F\in L^H_\delta\}$. Base: $L^H_0$ is the finitary language of linear orders ([Sac07] clause (1)), so $d(0)\le\omega$. Successor: $L^H_{\delta+1}$ adds $\bigwedge p$ for every non-principal $n$-type of $T^H_\delta$ realized in $H$ ([Sac07] clause (4), p.~5), each of $\qr\le d(\delta)+1$, then finitary closure: $d(\delta+1)\le d(\delta)+\omega$. Limits: unions ([Sac07] clause (2)). Induction: $d(\delta)\le\omega\cdot(1+\delta)$; for $\delta<\lambda$ this is $<\lambda$, since $\lambda=\omega_1^X$ is admissible ([Mon13, p.~4]) hence multiplicatively closed. $L^H_\lambda=\bigcup_{\delta<\lambda}L^H_\delta$ closes the claim.

\smallskip
\noindent\textbf{(H-iii) $H$ realizes at least one non-principal $n$-type of $T^H_\lambda$.} \emph{Proof.} Suppose every complete $T^H_\lambda$-type realized in $H$ is principal.
\emph{(1) The lower bound $\cSR(H)>\lambda$.} [Mon13, p.~4] gives $\mathrm{SR}(H)=\omega_1^X+1=\lambda+1$, in the rank $\AKR$ (TV-c(i)). If $\cSR(H)\le\lambda$: either $\cSR(H)=\lambda$, a limit value, at which $\AKR$ and $\cSR$ coincide (TV-c(ii), [Mon15, p.~5433])---forcing $\AKR(H)=\lambda\ne\lambda+1$; or $\cSR(H)<\lambda$, whence $\AKR(H)\le\cSR(H)+1<\lambda+1$ (the two ranks differ by at most $1$, by the same clause). Both contradict $\AKR(H)=\lambda+1$. So $\cSR(H)\ge\lambda+1$. (Only TV-c is used here, so this is not a rank transport.) (Sharper form, noted: [Alv21, p.~1710] states $\SC(H)=\Pi_{\woneCK+2}$; at $X=\emptyset$ this gives $\cSR(H)=\lambda+1$ exactly, via the fingerprint table of \S7; for general $X$ that statement relativizes routinely. The present proof uses only $\cSR(H)>\lambda$.) \emph{(2) Pin and homogeneity:} $\omega_1^H=\lambda$ [(H-i)]; $H$ homogeneous over $T^H_\lambda$ [(H-ii)]; the classical countable back-and-forth closure of the extension property of [Sac07, (2.6)] yields: tuples $\bar a,\bar b$ with equal complete $T^H_\lambda$-types are automorphic, coordinatewise (written $H$-side; the AL(v)-pattern is not cited, its ambient differing). \emph{(3) Orbit computation:} let $\bar a\in H^n$ and $p_{\bar a}:=$ its complete $T^H_\lambda$-type---realized, hence principal by assumption: some atom $b\in L^H_\lambda(\bar x)$ generates it. $T^H_\lambda=\Th_{L^H_\lambda}(H)$ is a truth theory ([Sac07] clause (3), p.~4), so the generation sentences $\forall\bar x(b\to F)$, $F\in p_{\bar a}$, hold in $H$ definitionally. Every $\bar c$ with $H\models b(\bar c)$ satisfies all of $p_{\bar a}$, hence realizes it, hence shares $\bar a$'s complete $T^H_\lambda$-type, hence is automorphic to $\bar a$ [(2)]; conversely automorphic tuples satisfy $b$. So every $\Aut(H)$-orbit is defined by a parameter-free formula $b$ of $\qr<\lambda$ [(H-growth)], hence $\Sin{\lambda}$-expressible (only the form of the definition is used here). \emph{(4) Contradiction:} parameter-free $\Sin{\lambda}$-definability of all orbits is [Mon15, (U1) at $\alpha=\lambda$, Thm.~1.1, pp.~5427--5428] $\Rightarrow$ (U2): a $\Pin{\lambda+1}$ Scott sentence $\Rightarrow\cSR(H)\le\lambda$, contradicting~(1). (Only TV-c is used, in the direction available from [Mon15, p.~5433]; $\srS$ does not appear in this proof, so there is no rank transport.)

\smallskip
\noindent\textbf{(H-sr) $\srS(H)=\lambda+1$.} \emph{Proof.} If $\srS(H)=\delta\le\lambda$: $H$ is the atomic model of $T^H_\delta$ ([Sac07, (2.5)]), so every realized complete $T^H_\delta$-type is principal; then clause (4) adds nothing at $\delta$, $L^H_{\delta+1}=L^H_\delta$ and $T^H_{\delta+1}=T^H_\delta$; by induction (unions of a constant chain at limits) $T^H_\gamma=T^H_\delta$ for all $\gamma\in[\delta,\lambda]$---so every realized $T^H_\lambda$-type is a realized $T^H_\delta$-type, all principal, contradicting (H-iii). Hence $\srS(H)\ge\lambda+1$; the Nadel bound (\S2.7; [Sac07, p.~5]) with (H-i) gives $\srS(H)\le\omega_1^H+1=\lambda+1$. (This proof is entirely in terms of $\srS$; no equation involving $\cSR$ occurs in it.)

\smallskip
\noindent\textbf{The locus fact.} $\omega_1^{T_H,H}=\lambda$: $T_H$ is recursive and $H$ has an $X$-computable presentation, so a presentation of the pair is computable from $X$ and $\omega_1^{\langle T_H,H\rangle}=\omega_1^X=\lambda$ (the join of recursives keeps the least admissible at $\omega_1^X$; [Mon13, p.~4] facts); and $\srS(H)=\lambda+1\ge\lambda=\omega_1^{T_H,H}$ by (H-sr). So the criterion of Lemma~JP is satisfied at $H$, and the condition discussed next is the only remaining hypothesis of [Sac07, Thm.~8.1] at $H$.

\smallskip
\noindent\textbf{(H-iv).} One may ask whether $H$ is $\lambda$-saturated in the sense of
[Sac07, p.~14], that is, whether every $n$-type of $T^H_\lambda$ is realized in
$H$. The natural route to this passes through the condition
$|S_{T^H_\beta}|\le\aleph_0$ for $\beta\le\lambda$, that is, weak scattering of
$T_H$ in $L(\lambda,\langle T_H,H\rangle)$, with the top level $\beta=\lambda$ the
essential case. Proposition~HC-2 below shows that this condition fails already at
its lowest level, since $|S_{T^H_0}|=2^\omega$. Hence $T_H$ is not weakly scattered in $\mathbb A_H:=L(\lambda,\langle T_H,H\rangle)$ and the first hypothesis of [Sac07, Thm.~8.1] \emph{fails} at $(T_H,H)$: there is no genuine instance of that theorem at $H$. The locus fact above is unaffected; the hypothesis it left as the only one still to be verified is now shown to fail.

\emph{Remark (the scope of the hypothesis in [Sac07, Thm.~8.1]; see pp.~21--24 there).} Independently of the refutation, the condition just described is not quite the hypothesis of that theorem. \textup{(D1)} The top level $\beta=\lambda$ is not in the range of the hypothesis as stated ($T^H_\lambda\notin\mathbb A_H$, so the p.~23 quantifier ``for all $T'\in A$'' never reaches it) and is not used by the proof given there (the Thm.~3.3 step is invoked exactly ``for all $\beta$ such that $\beta+1<\mathrm{sr}(A)$'', which at $H$ is $\beta<\lambda$; the only top-level contact is the p.~24 restriction chain (8.20)--(8.22), handling realized types one at a time, using no type \emph{set}); \textup{(D2)} below $\lambda$ the hypothesis is broader than the tower family (it ranges over all $\omega$-complete extensions of $T_H$ in $\mathbb A_H$), a hierarchy-wide use belonging to the enumeration of [Sac07, \S8] ((8.12), p.~21) and to the split apparatus of \S9 there, not to the proof of 8.1; \textup{(D2$'$)} the proof-sufficient narrowing 8.1$'$ (same conclusion from: $S_{T^A_\beta}\in$ locus for all $\beta$ with $\beta+1<\mathrm{sr}(A)$, plus Thm.~3.3's lightface uniformity) is obtained by inspection of the proof given there. We do not claim it as a statement of [Sac07].

\emph{Remark.} The results we know of on back-and-forth relations for linear orders --- [AK00, \S15], in particular Prop.~15.1 there, the work of Ash [Ash86] on well-ordered blocks, and the machinery of $\sim_\alpha$ in [Har18] --- all classify data attached to tuples, whereas what is wanted here is control on the cardinality of the set of consistent $n$-types of the tower theory. Passing between the two is exactly the difficulty recorded as Question~3 of \S6; we have not found a way across it. Proposition~HC-2 below is a \emph{lower} bound by exhibited realizations, the opposite direction from the unavailable upper-bound transfer.

\subsubsection*{Resolution of (H-iv): $|S_{T^H_0}|=2^\omega$}

\emph{Setting and conventions} (with the sourcing convention stated at the start of \S7.H): $X\in2^\omega$; $H=$ the Harrison linear order of $X$, order type $\omega_1^X\cdot(1+\mathbb Q)$ [Mon13, p.~4]; $\lambda:=\omega_1^X=\omega_1^{T_H,H}$ [the locus fact]; $T^H_\beta,L^H_\beta=H$'s canonical Scott tower ([Sac07, \S2, clauses (1)--(4)]), so $T^H_0=\Th_{L^H_0}(H)$ is the complete finitary first-order theory of $H$; $\mathbb A_H:=L(\lambda,\langle T_H,H\rangle)$; we write $\mathrm{sr}$ for $\srS$ throughout. The ambient theory is ZFC; no determinacy hypothesis and no large cardinal is used, and no rank transport occurs (all statements below concern $\srS$ and $\omega_1$-invariants only).

\begin{rlem}[HC-1: the tower inside $\mathbb A_H$, and a dichotomy at each level]
\textup{(i)} For every $\beta<\lambda$: $T^H_\beta\in\mathbb A_H$ and the formula set of $L^H_\beta(\bar x)\in\mathbb A_H$. \textup{(ii)} For every $\beta<\lambda$: either $S_{T^H_\beta}\in\mathbb A_H$ (hence countable) or $|S_{T^H_\beta}|=2^\omega$. At $\beta=\lambda$ the machinery does not apply inside $\mathbb A_H$: $T^H_\lambda$ and the level-$\lambda$ formula set have rank $\lambda=o(\mathbb A_H)$ and are not elements of $\mathbb A_H$.
\end{rlem}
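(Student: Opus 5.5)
The plan for (i) is to run the construction of the canonical tower of [Sac07, \S2, clauses (1)--(4)] as a $\Sigma_1$-recursion inside $\mathbb A_H$. First I will check that $\mathbb A_H=L(\lambda,\langle T_H,H\rangle)$ is $\Sigma_1$-admissible. By the locus fact, $\lambda=\omega_1^{T_H,H}$. By Lemma~ID, this is the least height of an admissible set containing a code of $\langle T_H,H\rangle$, and (CC-core) at $A:=L_\lambda[X]$ transfers KP to the relativized constructible hierarchy over the pair.

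Next I will verify that each successor clause is $\Delta_1$ over $\mathbb A_H$, uniformly in the previous level. Given $L^H_\beta$ and $T^H_\beta=\Th_{L^H_\beta}(H)$ as elements, the following are set-satisfaction over the countable set $H$ and so $\Delta_1$:
\begin{itemize}
\item the complete $L^H_\beta$-type of a tuple of $H$;
\item whether that type is principal, i.e.\ whether some atom in $L^H_\beta(\bar x)$ generates it over $T^H_\beta$;
\item the least fragment obtained by adjoining the conjunctions $\bigwedge p$.
\end{itemize}
Limit stages are unions. $\Sigma_1$-recursion, a KP theorem scheme, then makes $\beta\mapsto(L^H_\beta,T^H_\beta)$ total and $\Sigma_1$ on $\lambda$, with each value an element of $\mathbb A_H$. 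The formula set $L^H_\beta(\bar x)$ is then obtained by $\Delta_0$-separation. The ranks stay below $\lambda$ by (H-growth): every formula has $\qr<\lambda$. This is essentially Sacks' own observation that the tower lives in $L(\omega_1^A,A)$, used in his proof of (2.6). I will cite that and only write out the relativization to the pair.

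For (ii), fix $\beta<\lambda$ and use (i) to put $T^H_\beta$ and its fragment in $\mathbb A_H$. The set $S_{T^H_\beta}$ of $n$-types is, under a coding of the countable fragment by reals, a subset of $2^\omega$ that is $\Sigma_1$-definable over $\mathbb A_H$. Indeed it is $\Delta^1_1$ in a code of $(T^H_\beta,L^H_\beta)$, since maximality and finitary consistency are arithmetic in the enumeration. I will then apply the effective perfect set theorem relativized to an admissible set, in the form used at [Sac07, Thm.~3.1/Cor.~3.2]: either this $\Sigma_1(\mathbb A_H)$ set of reals has a perfect subset, giving $|S_{T^H_\beta}|=2^\omega$, or every member lies in $\mathbb A_H$. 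In the second case I will show that $S_{T^H_\beta}$ is itself an element. Its members are the elements of $\mathbb A_H$ satisfying a $\Delta_1$ condition, and $\Sigma_1$-collection bounds the constructible levels at which they appear below some $\beta^*<\lambda$. Then $\Delta_0$-separation inside $L(\beta^*,\langle T_H,H\rangle)$ produces the set. Countability follows, since every element of $\mathbb A_H$ is countable.

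For the top level I will use the proof of (H-sr). There, $\srS(H)=\lambda+1$ forces $L^H_{\delta+1}\supsetneq L^H_\delta$ for every $\delta<\lambda$, so the new conjunctions $\bigwedge p$ have rank cofinal in $\lambda$. Hence $T^H_\lambda$ and the level-$\lambda$ formula set have rank $\lambda=o(\mathbb A_H)$ and are not elements.

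The step I expect to be the main obstacle is the bounding argument in the countable case of (ii). The effective perfect set theorem gives membership of each type separately. Collecting them into a single element requires that the witnessing levels be chosen $\Sigma_1$-uniformly, and the pair $\langle T_H,H\rangle$ is carried as a predicate rather than as a real. I would first try to quote [Sac07, Cor.~3.2] verbatim with $T':=T^H_\beta$, since its conclusion is exactly ``$S_{T'}\in A$ for every countable $\Sigma_1$-admissible $A$ with $T'\in A$'' under the countability hypothesis. Failing that, I would prove the uniformity directly via the Gandy basis over $\mathbb A_H$.
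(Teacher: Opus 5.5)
Your plan follows the paper's proof. Part (i) goes by $\Sigma_1$-recursion on the tower clauses inside $\mathbb A_H$, with typehood $\Delta_0$ and fragment satisfaction in $H$ $\Delta_1$; the paper cites the effectivization pattern at [Sac07, (8.9)--(8.11)]. Part (ii) goes by [Sac07, Thm.~3.1] with Cor.~3.2, which is the citation you name as your first resort.

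The self-contained argument you sketch for the countable case of (ii) has a genuine gap, and you should replace it by that citation rather than patch it. $\Sigma_1$-collection bounds witnesses only for a quantifier $\forall u\in a$ ranging over an element $a\in\mathbb A_H$. The members of $S_{T^H_\beta}$ form a definable class, and whether that class is a set is exactly what you are trying to prove, so there is nothing to collect over. You are implicitly using the principle that a $\Delta_1$-definable class of reals, each of which lies in $\mathbb A_H$, is bounded below $\lambda$ and hence an element. That principle is false: the class of all reals of $\mathbb A_H$ is a counterexample.

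Relatedly, the perfect-set dichotomy has to be applied to the set of \emph{all} reals satisfying the typehood condition. That set is $\Pi^0_1$ in a code $z\in\mathbb A_H$ of $(T^H_\beta,L^H_\beta)$. Its trace on $\mathbb A_H$ is trivially contained in $\mathbb A_H$, so the dichotomy says nothing about it.

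What makes type sets tractable is that they are closed. For a countable $\Pi^0_1(z)$ set, the bound on its members comes from $\Sigma^1_1(z)$-boundedness, or from a Cantor--Bendixson analysis of length $<\omega_1^z\le\lambda$ carried out inside the admissible set. Once the members are bounded, $\Delta^1_1(z)$-definability makes the set an element. This is the content of Sacks' Theorem~3.1. It gives ``$S_{T^H_\beta}\in\mathbb A_H$ or $|S_{T^H_\beta}|=2^\omega$'' in one step, so your intermediate stage (each member lies in $\mathbb A_H$) is not needed. The Gandy basis theorem produces members of low hyperdegree; it does not by itself give a uniform bound on all members, so it is not the right fallback.

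For the clause at $\beta=\lambda$, your use of (H-sr) is correct. It gives a realized non-principal type $p$ of $T^H_\delta$ at every $\delta<\lambda$, and $\bigwedge p\notin L^H_\delta$, since otherwise $p$ would be principal. However, strict growth of the chain does not by itself make the ranks cofinal in $\lambda$: a strictly increasing countable chain of sets can have bounded rank. You need one more line. Because $p$ is maximal, it contains $F$ or $\neg F$ for every $F\in L^H_\delta(\bar x)$. So the rank of $\bigwedge p$ is at least that of the level-$\delta$ formulas, the rank goes up at every successor step, and the level-$\lambda$ sets have rank exactly $\lambda$. The paper states this clause without proof, so your argument with this addition actually supplies it.
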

\noindent \textup{(i)} is routine ($\Sigma_1$-recursion inside $\mathbb A_H$ on the tower clauses with $H$ the distinguished element for clause (4)'s realized-in-$H$ restriction; the inside-an-admissible effectivization pattern is stated at [Sac07, (8.9)--(8.11), p.~21]; typehood is $\Delta_0$ per the p.~9 line in Prop.~4.4's proof; fragment satisfaction in $H$ is $\Delta_1$ by the standard KP $\Sigma_1$-recursion). \textup{(ii)} is [Sac07, Thm.~3.1, p.~6]---``If $S_{p,b}\notin A$, then the cardinality of $S_{p,b}$ is $2^\omega$'', with $S_{p,b}$ in the notation of that theorem, the instances used here being the type-sets $S_{T^H_\beta}$ in the form supplied by (i) and $\Delta_0$-typehood---together with [Sac07, Cor.~3.2, p.~7].

\begin{rprop}[HC-2: $|S_{T^H_0}|=2^\omega$]
$|S_{T^H_0}|=2^\omega$.
\end{rprop}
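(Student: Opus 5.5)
Since $L^H_0$ is countable, $|S_{T^H_0}|\le 2^\omega$ is trivial. So the plan is to prove the lower bound directly, by exhibiting a binary tree of $1$-formulas over $T^H_0$ whose every branch is finitely satisfiable in $H$. Compactness then gives $2^\omega$ pairwise distinct complete $1$-types. This route does not pass through the dichotomy of Lemma~HC-1(ii). It uses only that $\lambda=\omega_1^X$ is admissible, hence $\lambda>\omega^\omega$ and $\lambda$ is closed under ordinal addition (\S2.6). It also uses that $T^H_0$ is the complete finitary theory of $H$, so a finite conjunction lies in some type of $T^H_0$ as soon as it is realized in $H$.

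\emph{Step 1 (definable limit levels).} I would define finitary formulas $\mathrm{Lim}_k(y)$ by recursion on $k<\omega$. Put $\mathrm{Lim}_0(y):=(y=y)$. Put $\mathrm{Lim}_{k+1}(y):=\exists z\,(z<y)\wedge\forall z<y\,\exists w\,(z<w<y\wedge\mathrm{Lim}_k(w))$. In a copy of $\lambda\cdot(1+\mathbb Q)$, a point $\lambda\cdot q+\gamma$ with $0<\gamma<\lambda$ should satisfy $\mathrm{Lim}_{k}$ iff $\gamma$ is divisible by $\omega^k$, i.e.\ iff $\gamma$ is an $\omega^k$-limit. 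Points of the form $\lambda\cdot q$ should satisfy every $\mathrm{Lim}_k$, because each point has a predecessor cofinality of $\lambda$ or a dense family of blocks below it. I would verify this by induction on $k$, and it needs only that every block is of type $\lambda$ with $\lambda$ closed under $\gamma\mapsto\gamma+\omega^k$.

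\emph{Step 2 (coefficient formulas).} For each $k$ I would write a formula $\chi_k(x)$ expressing ``the Cantor-normal-form coefficient $c_k$ of the offset $\gamma$ of $x$ within its block is nonzero''. Concretely, $\chi_k(x)$ says that there is $y\le x$ with $\mathrm{Lim}_k(y)$ such that no $z$ with $\mathrm{Lim}_{k+1}(z)\wedge y\le z\le x$ exists. The main check is that, for $x=\lambda\cdot q+\omega^\omega+\sum_{k<n,\ \sigma(k)=1}\omega^{k}$ with the sum taken in descending order of exponents, $H\models\chi_k(x)\iff\sigma(k)=1$ for all $k<n$. The padding $\omega^\omega$ keeps the block start, and any lower-level artefacts, out of the way. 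I expect this verification, together with Step~1 at the block boundaries $\lambda\cdot q$, to be the main obstacle. The points at $\lambda\cdot q$ are limits of every level, and the formulas must not mistake one of them for the last $\omega^{k+1}$-point below $x$. If this fails, my first fix would be to relativize all quantifiers to the interval above the last point satisfying every $\mathrm{Lim}_m$ with $m\le n+1$. That is still a single finitary formula for each fixed $n$.

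\emph{Step 3 (compactness).} For $\sigma\in2^\omega$ put $\Sigma_\sigma(x):=T^H_0\cup\{\chi_k^{\sigma(k)}(x):k<\omega\}$, where $\chi^1=\chi$ and $\chi^0=\neg\chi$. Every finite subset of $\Sigma_\sigma$ is realized in $H$ by the element chosen in Step~2, which exists because $\omega^\omega+\omega^{n}\cdot 2<\lambda$. So $\Sigma_\sigma$ is finitarily consistent with $T^H_0$ and extends to a complete $1$-type $p_\sigma$ of $T^H_0$. For $\sigma\ne\tau$ the types $p_\sigma$ and $p_\tau$ differ at the first $k$ where $\sigma(k)\ne\tau(k)$. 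This gives $|S_{T^H_0}|\ge 2^\omega$, and hence equality. As a consistency check, Lemma~HC-1(ii) then places $S_{T^H_0}\notin\mathbb A_H$, which matches the refutation of weak scattering stated in (H-iv).
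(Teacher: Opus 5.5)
Your proof is correct, and it reaches the lower bound by a different route from the paper's.

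\emph{The paper's route.} For each $c\in\{1,2\}^\omega$ the paper builds an explicit ill-founded order $M_c=\omega^\omega+D_c+1+H$. It then proves $M_c\equiv H$ from $\gamma+1+H\iso H$ together with the two classical facts (FV-sum) and (Ord-$\equiv$), which it imports without proof in Appendix~A.6. Finally it reads $c$ off the type of the middle point. So each of the $2^\omega$ types is exhibited as realized in a concrete model of $T^H_0$.

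\emph{Your route.} You show that the formulas $\chi_k$ are independent over $T^H_0$: every finite pattern is realized by one element inside a single block of $H$, and compactness supplies the infinite branches. This needs no auxiliary model and no elementary-equivalence verification, hence neither (FV-sum) nor (Ord-$\equiv$). The only fact about $H$ you use is that a block contains the offsets below $\omega^\omega+\omega^n\cdot 2$.

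\emph{What each buys.} Your argument gives up the explicit realization of each type, but nothing downstream needs it. (R1) uses only the cardinality. An independent sequence of formulas is exactly the ``perfect tree of consistent conditions'' of the Boolean-algebra remark. The absoluteness claim of A.3 becomes, if anything, simpler.

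\emph{The check you flagged.} The verification you call the main obstacle goes through as written, and the fallback is not needed. Write $x$ for the chosen element at offset $\omega^\omega+\sum_{j<n,\,\sigma(j)=1}\omega^j$.
\begin{itemize}
\item If $\sigma(k)=1$, take $y$ at offset $\omega^\omega+\sum_{k<j<n,\,\sigma(j)=1}\omega^j+\omega^k$. It witnesses $\chi_k$, because no offset in $[y,x]$ is divisible by $\omega^{k+1}$.
\item If $\sigma(k)=0$, every $\mathrm{Lim}_k$-point $y\le x$ lies at or below the point $z$ at offset $\omega^\omega+\sum_{k<j<n,\,\sigma(j)=1}\omega^j$. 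That $z$ is a $\mathrm{Lim}_{k+1}$-point in $[y,x]$, so $\chi_k$ fails. This includes the case $k=0$, where $z=x$.
\end{itemize}
Because of the $\omega^\omega$ padding, block starts never enter the check.

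\emph{A small correction to Step~1.} The minimum of $H$, the start of the initial block, satisfies no $\mathrm{Lim}_k$ with $k\ge1$. The other block starts satisfy every $\mathrm{Lim}_k$ because the blocks below them have no last element, not because of a cofinality-$\lambda$ condition. Neither point affects your argument.
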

\emph{Detector hierarchy} (FO, uniform): $\ell_1(x):=$ ``$x$ has a predecessor and no immediate predecessor''; $\ell_{j+1}(x):=\ell_1(x)\wedge$ ``the $\ell_j$-points below $x$ are cofinal in the predecessors of $x$''; $\ell_j$ has finitary quantifier depth $\le2j+2$; in an ordinal $\ell_j$ picks exactly the nonzero multiples of $\omega^j$, and the definition is local-structural, so it reads the same profile in ill-founded orders. For $c\in\{1,2\}^\omega$ set $D_c:=$ the reversed-$\omega$-indexed sum $\cdots+\omega^3 c_3+\omega^2 c_2+\omega c_1+c_0$ (an ill-founded order: an infinite descending sequence of block-groups, the finite-rank groups on top); $B_c:=\omega^\omega+D_c$; $M_c:=B_c+1+H$; $a:=$ the distinguished middle point, so $(-\infty,a)_{M_c}=B_c$ and $(a,\infty)_{M_c}\iso H$. [Notation guard: this $B_c$ is a linear order local to this passage, unrelated to the frame's $B_\lambda$.]

\emph{Proof.} \textbf{(i)} (This step uses two classical facts, named below, for which we give references in Appendix~A.6.) For every $n\ge1$: $\omega^\omega+D_{>n}(c)\equiv_n\omega^\omega+\delta_n$ for an ordinal $\delta_n$ independent of $c$; hence by summand congruence $B_c\equiv_n(\omega^\omega+\delta_n)+E_n(c)=:\gamma_n(c)$, an ordinal $<\omega^\omega\cdot2$, where $E_n(c):=\omega^n c_n+\cdots+\omega c_1+c_0<\omega^{n+1}\cdot3$. The two classical facts are (see [Ros82, Ch.~6] for the calculus of $n$-characteristics): \emph{(FV-sum)} [$A\equiv_k A'\Rightarrow A+C\equiv_k A'+C$, ordered sums] and \emph{(Ord-$\equiv$)} [the ordinal $\equiv_k$ calculus: ordinals $\ge\omega^k$ congruent mod $\omega^k$ are $\equiv_k$, with the block-absorption Duplicator strategy for rank-$\ge k$ descending sums]; neither is stated in this form in the sources otherwise cited here (the closest statements in the sources otherwise cited here are [AK00, Lemmas~15.7--15.10, pp.~244--245]; see A.6). \textbf{(ii)} From the order type of $H$: $\gamma+1+H\iso H$ for every $\gamma<\lambda$: $\lambda=\omega_1^X$ is admissible hence additively indecomposable, so $\gamma+1+\lambda=\lambda$ and the $\lambda\cdot\mathbb Q$ tail is unchanged. \textbf{(iii)} $M_c=B_c+(1+H)\equiv_n\gamma_n(c)+1+H\iso H$ for every $n$ [(FV-sum) and (ii)], so $M_c\models T^H_0$. \textbf{(iv)} [type separation; computed] for $i\ge1$ let $\theta_i(x):=$ ``exactly $c_i$ points $y<x$ satisfy: $\ell_i(y)$ and $y$ is strictly above every $z<x$ with $\ell_{i+1}(z)$'' (bounded counting $\le2$; an $\ell_{i+1}$-point below $x$ exists), and $\theta_0(x):=$ ``the greatest $y<x$ is a $1$-limit'' ($c_0=1$) / ``$\ldots$is the successor of a $1$-limit'' ($c_0=2$); $\theta_i$ has finitary quantifier depth $O(i)$. Computation of the count in $B_c$ below $a$: the greatest $\ell_{i+1}$-point of $B_c$ is the start of the first $\omega^i$-copy in the $\omega^i c_i$-group; strictly above it, the $\ell_i$-points are exactly the remaining $(c_i-1)$ starts of the $\omega^i$-group plus the single first start of the $\omega^{i-1}$-group, and nothing else---total exactly $c_i$. So $\tp_{M_c}(a)$ reads off every bit of $c$; the map $c\mapsto\tp_{M_c}(a)$ is injective; each value is a maximal set of $L^H_0(x)$-formulas finitarily consistent with $T^H_0$ (realized in $M_c\models T^H_0$). \hfill$\square$

\smallskip
\noindent The argument is routine modulo the two classical facts (FV-sum) and
(Ord-$\equiv$) named in step (i); see Appendix~A.6 for their sources.

\smallskip
\noindent\textbf{Corollaries.} \textup{(R1)} $T^H_0\in\mathbb A_H$ but $S_{T^H_0}\notin\mathbb A_H$ (uncountable), so by the definition at [Sac07, p.~23] $T_H$ is \emph{not} weakly scattered in $\mathbb A_H$: the first hypothesis of [Sac07, Thm.~8.1] fails at $(T_H,H)$; there is no genuine instance of that theorem at $H$. (H-iv) is thereby resolved negatively; HC-1(ii)'s dichotomy lands on the continuum side at $\beta=0$. \textup{(R2) (ambient-independence):} any ambient theory in the pure linear-order signature satisfied by $H$ has the same level-$0$ truth theory (the complete finitary first-order theory of $H$), so the refutation applies as stated---so re-basing on a scattered recursive ambient theory $T_H'$ does not help, within the linear-order signature (a signature-expanded ambient changes $L^H_0$ and is not covered). \textup{(R3)} nothing further is to be proved toward the condition at $H$, since it is false; the relation asked for in Question~3 of \S6 would have been a route to an upper bound that does not exist, whereas Proposition~HC-2 is a lower bound obtained from explicit realizations. \textup{(R4)} (H-i)--(H-sr), the locus fact and Lemma~JP are all consistent with the refutation just recorded; the reason is that $T_H$ is far from weakly scattered: its truth tower at $H$ already carries continuum many types at the finitary level. \textup{(R5)} the saturation comparison of Lemma~AL(iv) does not arise at $H$. A secondary remark, independent of the above: $q(x,y):=\{$``$y$ is a $1$-limit or the minimum''$\}\cup\{$``no $\ell_1$-point $z$ with $y<z\le x$''$\}\cup\{$``$x>y+n$'': $n<\omega\}$ is finitarily consistent with $T^H_1$ and unrealized in $H$---a consistent-vs-realized separation from level $1$ on, from the order type only.

\begin{rrem}[Boolean-algebra formulation]
Superatomic Boolean algebras make genuine contact: HC-1(ii) is the Stone-dual statement that the fragment Lindenbaum algebra $\mathfrak B_\beta(\bar x)$ over $T^H_\beta$ is superatomic or carries a perfect tree of consistent conditions, and Proposition HC-2 says $\mathfrak B_0$ is \emph{not} superatomic. (The absoluteness of Proposition~HC-2 is discussed in Appendix~A.3.)
\end{rrem}

\begin{rrem}
The list of high Scott complexities at [Alv21, p.~1709] includes an entry stated as
``d--$\Pi_{\omega_1^A+1}$'', there and in Theorem~1.8 on the same page, whereas
the catalogue of [Alv21, Thm.~1.6, p.~1708] and the summary at [Alv21, p.~1712]
admit only $\Sigma_\alpha$, $\Pi_\alpha$ and d--$\Sigma_\alpha$. The two are
reconciled by the fact that a difference of two $\Pi_\alpha$ sets and a
difference of two $\Sigma_\alpha$ sets form the same class, so the entry may be
read either way. We use only the $\Pi$ side of that list, so nothing above
depends on how it is read.
\end{rrem}

\appendix
\section{Verification and sources}

\emph{This appendix records: the conventions on attribution used above (A.1);
consistency checks against a list of known structures (A.2); the absoluteness of
the statements proved (A.3); the fact that no determinacy hypothesis is used
(A.4); the two results proved here by assembly, and the rank transports (A.5);
the classical facts used without a located published proof (A.6); a
concordance between the preprint and published editions of [Sac07] (A.7); and the
citation conventions (A.8).}

\subsection*{A.1. Attribution}
Every result above either cites, at each step, a published source with page, or
is marked explicitly as assembled from cited statements, or is proved in full.
Where a result is described as routine, a proof or sketch is given at the item.
Three results below are proved by assembly from statements in the literature
rather than cited: Theorem~(TV-b) and Proposition~(AK-$\sim$-bridge) of \S2.5,
and the club form of $\Ffix$ in Lemma~F; each is identified as such at the
point where it occurs, and again in A.5.

\subsection*{A.2. Consistency checks}
Each lemma and theorem above was checked against the following structures and
families: well-orders; the Harrison linear order; superatomic Boolean algebras;
$\omega$-sums of Boolean algebras with rank increasing along the sum; the
$1$-transitive linear orders (H.~Friedman's example, described by Sacks
[Sac83]; see also [GRT, p.~3]); Koerwien's $\omega$-stable theory with
non-Borel isomorphism relation [Koe11]; and Newelski's small weakly minimal
theory with a type of $M$-rank $\infty$ [New98]. None of them contradicts any statement above. The
informative cases are these. The Harrison order is treated in \S7.H; it exhibits
the configuration of branch~(III), and the identity
$\SRsym(H)=\AKR(H)=\woneCK+1$, obtained from TV-a together with (H-sr) and
[Mon13, p.~4], is an instance of Theorem~(TV-b) established by a route
independent of the argument given for that theorem. The upper bound
$\SRsym(A)\le\omega_1^A+1$ of Corollary~(SR$^\sim$-Nadel) is attained there, so
the bound is sharp. The $1$-transitive linear orders form a
$\mathrm{PC}_{\omega_1\omega}$ class with exactly one model of each Scott rank
([Sac83]; [GRT, p.~3]), outside the scope of Lemma~S; a member with $\AKR=\omega_1^A+1$ and $\SRsym\le\omega_1^A$ is excluded
outright \emph{by} Theorem~(TV-b); unlike the Harrison row, which is
route-independent, this row is a consequence of the theorem and is not
independent corroboration of it. Superatomic Boolean algebras enter
through \S7.H only: part (ii) of Lemma~HC-1 is, in Stone duality, the statement
that the Lindenbaum algebra of the fragment over $T^H_\beta$ is either
superatomic or carries a perfect tree of consistent conditions, and
Proposition~HC-2 says that at $\beta=0$ it is not superatomic. The remaining
families make no contact: well-orders lie below the values at which the results
above apply; the $\omega$-sums meet only the clauses about limit values;
Koerwien's example is not scattered and is excluded wherever scatteredness is a
hypothesis, in particular at step (C1) of \S5.11; and the $M$-rank of Newelski's
example is a finitary notion unrelated to the ranks used here.

\subsection*{A.3. Absoluteness}
Statements about $\varphi$, $\Ccl_\beta$, $K_\beta$, $W_\lambda$, $B_\lambda$ and
$A_\lambda$ are read in $\omega_1$-preserving extensions. Each theorem above is,
instance by instance, a Boolean combination of $\Sigma^1_2$ predicates of reals:
rank values; the identities $\omega_1^M=\lambda$, where the inequality
$\ge$ is free by Lemma~FL, so that only the $\le$ half has to be expressed; and
data about fragment types, which are Borel in codes once the branch parameter $p$
of \S3.7 is fixed. Shoenfield absoluteness therefore applies to each instance.
Claims of stationarity are internal to the model of ZFC in which they are made,
and are re-derived in each extension. In particular, membership in $\Spec_\lambda$
is $\Sigma^1_2$ in a code, as set out in \S5.11, so it is absolute, and
$\Spec_\lambda$ acquires new members in an extension only at sets $S$ that are
not in the ground model. Lemma~CC is a theorem schema of ZFC, and Theorem~(TV-b)
with its two corollaries is a theorem schema of ZFC about countable structures.
Proposition~HC-2 is $\Sigma^1_2$ in codes, since $c\mapsto\tp_{M_c}(a)$ is
arithmetic in $c$ together with a code for $H$, and is therefore absolute and
upward persistent.

\subsection*{A.4. Determinacy}
No determinacy hypothesis and no large cardinal is used anywhere above; the base
theory is ZFC throughout. Determinacy occurs only in the works cited in \S1.4,
where it is noted by their authors: [Mon13, Thm.~3.2]; the classification of
Scott spectra in [Har18]; and the conditional minimality criteria of
[Bec94, \S\S4, 6], where projective determinacy is named at p.~780.

\subsection*{A.5. The assembled results, and the rank transports}
Theorem~(TV-b) --- that $\SRsym(A)=v\iff\AKR(A)=v$ for
$v\in\{\omega_1^A,\omega_1^A+1\}$ --- is assembled in \S2.5 from the
$\sim$-relations of [Mon15, p.~5432], the back-and-forth relations and Karp's
theorem at [GRT, p.~6], and the clause for $\rhole$ together with the Nadel bound
at [Mon13, pp.~3--4]. The source for $\AKR$ itself is [AK00, \S6.7, p.~98], where
a value identity of the same shape is stated without proof for the symmetric rank
$\SRAK$ of that book; as explained in the remark in \S2.5, that statement does not
yield Theorem~(TV-b), because $\SRAK$ and $\SRsym$ are suprema over different
families of relations, and we do not use it. We have found no published statement
of Theorem~(TV-b) in the form needed here.

Proposition~(AK-$\sim$-bridge) --- that $\SRAK(A)=v\iff\SRsym(A)=v$ for
$v\in\{\omega_1^A,\omega_1^A+1\}$ --- is likewise assembled in \S2.5, from the
clauses of [AK00, \S6.7] as read there (with the footnote on p.~99), the
$\sim$-relations of [Mon15, p.~5432], the Scott clause, and the two Nadel-form
caps. It is consumed in no proof; it is included because the comparison with
[AK00, \S6.7] raises the question and the sources cited settle neither
direction. Composed with the unproved value identity quoted in \S2.5 from
[AK00, \S6.7], it would re-derive Theorem~(TV-b); we rely on this in neither
direction.

The club form of $\Ffix$ (Lemma~F(ii), (iii)) is likewise assembled: [GRT] states
and proves only the conclusion at each fixed point, and asserts no club; the
closure and unboundedness arguments are given in full at Lemma~F, from
ingredients in the two paragraphs of the proof of [GRT, Thm.~3.6, p.~14].

Finally, every proof above that passes between $\srS$ and $\cSR$ does so
through Lemma~TV of \S2.5, which records the complete list of such steps:
step~(3) of the proof of Lemma~5.9; Corollary~5.10; Lemma~AL(vii-2); and
Lemma~S(iv)--(v), hence Theorem~5.3(i)--(ii). Every other proof stays on one
side or the other.

\subsection*{A.6. Classical facts used without a located proof}
Three classical facts are used above for which we give the standard references
but not proofs.
\begin{itemize}[leftmargin=1.4em]
\item \emph{(FV-sum):} $A\equiv_k A'\Rightarrow A+C\equiv_k A'+C$ for ordered
sums of linear orders; and \emph{(Ord-$\equiv$):} the calculus of
quantifier-rank-$k$ equivalence for ordinals, by which ordinals $\ge\omega^k$ that
are congruent modulo $\omega^k$ are $\equiv_k$. Both are used in
Proposition~HC-2. The finitary forms are in Rosenstein [Ros82, Ch.~6]; the
closest statements in the sources otherwise cited here are
[AK00, Lemmas 15.7--15.10, pp.~244--245], which prove the corresponding
congruence for ordered sums and intervals, and the full calculus for well-orders,
in terms of the standard back-and-forth relations rather than
Ehrenfeucht--Fra\"iss\'e games.
\item \emph{(SC-code):} $\alpha$ is $v$-admissible if and only if
$\alpha=\omega_1^{v\oplus y}$ for some $y$. This is stated at
[Bec94, p.~764, \S0.E] and attributed there to Sacks; the proofs are in Sacks,
\emph{Countable admissible ordinals and hyperdegrees}, Adv.\ Math.\ 20 (1976),
213--262, and in Steel, \emph{Forcing with tagged trees}, Ann.\ Math.\ Logic 15
(1978), 55--74. It is not used in any proof above.
\end{itemize}
The background facts about Kripke--Platek set theory used in Lemma~CC are
standard; see [Bar75, Ch.~I, \S\S6--8 and Ch.~II, \S\S1, 5--6] and, for Barwise
compactness and control of the standard part, [Bar75, Ch.~III, 5.6, p.~99 and
7.5, pp.~107--109, together with 3.8, p.~91]. The omitting-types step in
[Sac07, Prop.~4.7] rests on Grilliot [Gri72, pp.~85--88].

\subsection*{A.7. Concordance for {[Sac07]}}
All citations to [Sac07] above use the numbering of the December~9, 2004
preprint. In the published edition (Notre Dame J.\ Formal Logic 48 (2007), no.~1,
5--31) every theorem, proposition, corollary and equation number cited here is
unchanged in \S\S1--8; only the pages differ. The correspondences are:
Prop.~5.1, p.~13 $\to$ p.~16; Prop.~5.2, Thm.~5.3 and the definition of
$\alpha$-saturation, p.~14 $\to$ pp.~16--17; the proof of Thm.~5.3, p.~15 $\to$
p.~17; Prop.~4.3, p.~9 $\to$ p.~12; Props.~4.5 and 4.6, p.~10 $\to$ p.~13;
Thm.~4.9, pp.~12--13 $\to$ pp.~14--15; Thm.~3.1, Cor.~3.2 and Thm.~3.3,
pp.~6--7 $\to$ pp.~10--11; Prop.~2.1, p.~6 $\to$ p.~9; (2.5)--(2.12), pp.~5--6
$\to$ p.~9; Thm.~6.1, p.~15 $\to$ p.~17; Thm.~8.1 and (8.19)--(8.22),
pp.~23--24 $\to$ pp.~24--25; Cor.~8.2, p.~24 $\to$ pp.~25--26; Thm.~9.1, p.~25
$\to$ p.~27; the restatement in \S1 of the bounding result, p.~3 $\to$ p.~7;
Cor.~6.2 and (6.1), p.~16 $\to$ p.~18. The remaining bare page references are to
unnumbered material; since pagination is monotone, the rows above bracket their
published locations: the \S2 recursion clauses (1)--(4) (pp.~4--5 here) lie on
pp.~7--9; the \S4 interface and the review of $\omega$-completeness (pp.~8--9)
on pp.~11--12; Props.~4.7--4.8 (pp.~10--11) on pp.~13--14; and the displays
(8.9)--(8.11) (p.~21) between p.~18 and Thm.~8.1 at p.~24. Section 9 is renumbered below (9.6), but no item of \S9 beyond
Thm.~9.1 is cited here. The published edition labels the review of fragments in
\S4 as \S4.1 and renumbers the reference list; the typographical slip in (2.5)
noted in \S2.4 is present in both editions.

\subsection*{A.8. Citation conventions}
Throughout, citations to Sacks, \emph{Bounds on weak scattering}, are to [Sac07]
and follow the preprint numbering as just described. Page references to
[Mon15] are to the published edition (Proc.\ Amer.\ Math.\ Soc.\ 143.12 (2015),
5427--5436) and have been verified against it; in particular the clause quoted
in the remark in \S2.4 is quoted as printed there. Page references to [Alv21]
are likewise to the published edition (J.\ Symb.\ Log.\ 86.4 (2021),
1706--1720) and have been verified against it. Page references to [Mon13] and
[GM23] follow the publicly posted preprint versions of those two papers.


\end{document}